\newtheorem{thm}{Theorem}
\newtheorem{cor}{Corollary}
\newtheorem{theorem}{Theorem}[section]
\newtheorem{lemma}[theorem]{Lemma}
\newtheorem{proposition}[theorem]{Proposition}
\newtheorem{corollary}[theorem]{Corollary}
\newtheorem{definition}[theorem]{Definition}
\def\S{\operatorname{Spin}^c}
\def\s{\text{spin}^c}
\def\relS{\operatorname{{Spin}^c}}
\def\intpt{\mathbb{T}_{\b\alpha}\cap\mathbb{T}_{\b\beta}}
\def\intptt{\mathbb{T}_{\b\alpha}\cap\mathbb{T}_{\b\gamma}}
\def\dis{\displaystyle}
\def\PD{\operatorname{PD}}
\def\b{\boldsymbol}
\def\abc{{\b\alpha\b\beta\b\gamma}}
\def\Z{\mathbb{Z}}
\def\Q{\mathbb{Q}}
\def\F{\mathbb{F}}
\def\ker{\operatorname{ker}}
\def\im{\operatorname{Im}}
\def\lcm{\operatorname{lcm}}
\def\Hom{\operatorname{Hom}}
\def\Tor{\operatorname{Tor}}
\def\can{\operatorname{can}}
\title{$\tau$-invariants for knots in rational homology spheres}
\author{Katherine Raoux}
\address{Katherine Raoux, Michigan State University, East Lansing, MI}
\email{raouxkat@msu.edu}
\begin{document}
\maketitle

\begin{abstract}
Ozsv\'ath and Szab\'o used the knot filtration on $\widehat{CF}(S^3)$ to define the $\tau$-invariant for knots in the 3-sphere. In this article, we generalize their construction and define a collection of $\tau$-invariants associated to a knot $K$ in a rational homology sphere $Y$. We then show that some of these invariants provide lower bounds for the genus of a surface with boundary $K$ properly embedded in a negative definite 4-manifold with boundary $Y$.
\end{abstract}

\section{Introduction} 
The $\tau$-invariant for knots in the 3-sphere defined by Ozsv\'ath and Szab\'o has proven to be a useful and robust invariant for studying knot concordance. A key property of this invariant is that $\tau(K)$ is a lower bound for the 4-ball genus of $K$ \cite{OS-4ball}. The goal of this article is to introduce a generalization, a collection of rational numbers $\{\tau_\mathfrak{s}(Y,K)\}_{\mathfrak{s}\in\S(Y)}$ associated to a knot $K$ in a rational homology sphere $Y$ and to prove that if $Y$ bounds an appropriate 4-manifold $W$, these invariants also provide lower bounds on the genus of a surface embedded in $W$ with boundary $K$.

Our main theorem concerns a knot $K$ in the boundary of a negative definite 4-manifold $W$. If $K$ is rationally null-homologous, there exists an integer $p$ such that $p[K]=0\in H_1(W;\Z)$, and we can consider a surface $\Sigma$ embedded in $W-\nu(K)$ with $[\partial\Sigma]=p[K]\in H_1(\nu(K);\Z)$. We call $\Sigma$ a \emph{p-slice surface} or a \emph{rational slice surface} for $K$. This gives a 4-dimensional notion of the more familiar concept of a rational Seifert surface for a knot in a 3-manifold.

A \emph{rational q-Seifert surface} for a rationally null-homologous knot $K$ in a 3-manifold $Y$ is a surface $F$ embedded in $Y-\nu(K)$ such that $[\partial F]=q[K]\in H_1(\nu(K);\Z)$. In particular, $q$ is a multiple of the order of $K$. Calegari and Gordon define the rational genus of a knot to be 
\begin{equation*}
||K||=\inf \frac{-\chi(F)}{2q}
\end{equation*}
where the infimum is taken over all $q$ and all $q$-Seifert surfaces without sphere components and obtain lower bounds on the rational genus for many knots in 3-manifolds \cite{Calegari-Gordon}. Using Heegaard Floer type invariants, Ni \cite{Ni-LinkFloer} and Ni and Wu \cite{Ni-Wu} have found further bounds on the rational genus. In particular, Ni showed that the knot Floer homology of a knot in a rational homology sphere detects the rational genus \cite{Ni-LinkFloer}. If $A_{\max}$ is the maximum Alexander grading of a nontrivial element of $\widehat{HFK}(Y,K)$, then 
\begin{equation}\label{3genus}
||K||=A_{\max}-\frac{1}{2}.
\end{equation}
By construction, our invariants are bounded above by $A_{\max}$. Therefore, we observe that $\tau_\mathfrak{s}(Y,K)\leq ||K||+\frac{1}{2}$. 

This raises the question, then, what happens when we consider $Y$ as the boundary of a 4-manifold? Any $q$-Seifert surface can be pushed into the 4-manifold to obtain a $q$-slicing surface. However, it is also possible that $K$ has a $p$-slicing surface for which $p<q$. This makes the relationship between the genera of rational Seifert surfaces and rational slicing surfaces for $K$ more subtle than the relationship between the 3-genus and 4-genus of knots in the 3-sphere.

Nevertheless, we establish that our invariants provide lower bounds for the genera of slice surfaces in the $p=1$ case:

\begin{thm}\label{maintheorem} Let $W$ be a negative definite 4-manifold and $K$ a knot in $\partial W=Y$. Let $F$ be a $q$-Seifert surface for $K$ and $\mathfrak{t}$ a sharp $\s$-structure on $W$. Then for any surface $\Sigma$ such that $\partial\Sigma=K$, 
\begin{equation*}
\frac{1}{q}\langle c_1(\mathfrak{t}),[q\Sigma\cup -F]\rangle+\frac{1}{q^2}[q\Sigma\cup -F]^2+2\tau_\mathfrak{s}(Y,K)\leq 2g(\Sigma)
\end{equation*}
where $\mathfrak{s}=\mathfrak{t}|_{Y}$.
\end{thm}

If $W$ is a rational homology 4-ball, the left-most terms vanish, so we have the following corollary: 
\begin{cor}\label{maincorollary}
Let $W$ be a rational ball with $\partial W=Y$, and $\Sigma$ a properly embedded surface in $W$ with $\partial\Sigma=K$. Then for each $\s$-structure $\mathfrak{s}$ on $Y$ that extends over $W$, $$|\tau_\mathfrak{s}(Y,K)|\leq g(\Sigma).$$
\end{cor}
We also show that our invariants satisfy properties similar to the original $\tau$-invariant. Therefore our invariants provide homomorphisms from the rational concordance group, as defined by Cha \cite{Cha}, to $\Q$. 

An alternative generalization of the $\tau$-invariant has recently been given by Ni and Vafaee in \cite{Ni-Vafaee}. However, our invariants satisfies the additional property that $\tau_{\mathfrak{s}}(-Y,K)=-\tau_{\mathfrak{s}}(Y,K)$.  For knots in $L$-spaces, our invariants coincide with theirs. For knots in lens spaces both coincide with the invariants defined by Celoria in \cite{Celoria}.

\subsection*{Organization} We begin in Section 2 with the construction of our invariants and conclude with a method for computing the Alexander grading from a Heegaard diagram. In Section 3 we turn to understanding how our invariants transform under change of orientation of $Y$ and $K$ as well as conjugation of $\mathfrak{s}$ and connected sums. Section 4 is devoted to describing the relationship between the knot filtration and certain maps on Floer homology determined by cobordisms between 3-manifolds. In Section 5 we give the proof of Theorem \ref{maintheorem}. Finally in Section 6 we compute some explicit examples.

\subsection*{Acknowledgements.} The author would like to thank Matthew Hedden, Adam Levine, and Thomas Mark for helpful conversations in the process of writing this paper. Thanks are also due to Jen Hom and Youlin Li for helpful comments on an earlier version of this paper and to Bryn Mawr College for its hospitality. Finally,
as the work here comprises my PhD thesis, I would especially like to thank my advisor Daniel Ruberman for his support and guidance and for suggesting this project in the first place.

\section{The definition of $\tau$}
Before giving the definition of the $\tau$-invariant, we first recall some facts about Heegaard Floer homology. For more detail, the reader may refer to \cite{OS-rationalsurgeries}. We adopt the orientation conventions of \cite{Hedden-Levine-surgery} throughout.

\subsection{Heegaard diagrams for $(Y,K)$} A \emph{doubly pointed Heegaard diagram} for an oriented knot $K$ in a 3-manifold $Y$ is a tuple $(\Sigma,\b\alpha,\b\beta,w,z)$ where $\Sigma$ is a genus $g$-surface, $\b\alpha$ and $\b\beta$ are each $g$-tuples of linearly independent simple closed curves on $\Sigma$, and $w$ and $z$ are basepoints on $\Sigma-\b\alpha-\b\beta$. The surface $\Sigma$ together with these sets of curves determine a Heegaard splitting of $Y$ as the union of two handlebodies $U_{\b\alpha}$ and $U_{\b\beta}$. As shown in \cite{OS-rationalsurgeries}, we can always construct such a doubly pointed Heegaard diagram from a particular choice of self-indexing Morse function $f$ on $Y$ so that the basepoints $w$ and $z$ determine the knot $K\subset Y$ with its orientation as the union of two gradient flow lines $K=\gamma_w-\gamma_z$ from the unique index 0 critical point to the unique index 3 critical point. Following convention, we assume that $U_{\b\alpha}=f^{-1}\left([0,\frac{3}{2}]\right)$ and $U_{\b\beta}=f^{-1}\left([\frac{3}{2},3]\right)$.

For calculations, it is convenient to represent $K$ as an oriented, possibly immersed, curve directly on the surface $\Sigma$. To do this, note that the knot $K=\gamma_w-\gamma_z$ can alternatively be decomposed into two arcs, one of which travels from $z$ to $w$ through $U_\alpha$ and the other from $w$ to $z$ through $U_\beta$. Now, we imagine letting the arc through $U_\alpha$ flow upward along the gradient, so that it lies on the surface as an arc $\nu_{\b\alpha}$ embedded in the complement of the $\b\alpha$-curves. Similarly, we let the arc through $U_\beta$ flow down until it lies as an arc $\nu_{\b\beta}$ embedded in the complement of the $\b\beta$-curves. The union $\nu_{\b\alpha}\cup\nu_{\b\beta}$ is now an immersed curve in the surface $\Sigma$ and the knot can be recovered from the diagram by remembering to which handlebody each arc belongs. 

Finally, for our arguments, we often find it useful to construct our Morse function in such a way that the final $\b\beta$-curve $\beta_g$ is a meridian for the knot that intersects only one of the $\b\alpha$-curves, $\alpha_g$ \cite{OS-knotinvariants}. In this case all of the generators of the Heegaard Floer chain complex, $\widehat{CF}(Y)$ are of the form $\mathbf{x}=(x_1,\ldots,x_{g-1},p)$ where $p$ is the unique intersection point between $\alpha_g$ and $\beta_g$.

\subsection{Relative $\s$-structures} In \cite{OS-rationalsurgeries}, Ozsv\'ath and Szab\'o show how to extend the correspondence between $\s$-structures and homology classes of non-vanishing vector fields introduced by Turaev \cite{Turaev-Spinc} to 3-manifolds $M$ with torus boundary components. Specifically, when $M=Y-\nu(K)$, a \emph{relative $\s$-structure} is a homology class of non-vanishing vector fields on $Y-\nu(K)$ that point outward on the boundary torus. There is an affine correspondence between relative $\s$-structures and classes in $H^2(Y-\nu(K),\partial\nu(K))\cong H^2(Y,K)$ which is analogous to the correspondence between $\S(Y)$ and $H^2(Y)$. We denote the set of relative $\s$-structures on $Y-\nu(K)$ by $\S(Y,K)$. 

In addition, there is a natural ``filling'' map 
\begin{equation*}
G_{Y,K}:\S(Y,K)\to\S(Y)
\end{equation*}
that can be described geometrically as follows. Let $\xi$ be a relative $\s$-structure on $Y$ and choose a representative vector field $v_\xi$ on $Y-\nu(K)$. Let $v_{\nu(K)}$ be a vector field on $\nu(K)\cong S^1\times D^2$ that agrees with $v_\xi$ on the boundary $\partial\nu(K)$ and smoothly extends over the interior in such a way that $K$ is a closed orbit and for each point $p$ in $S^1$ the vector field $v_{\nu(K)}$ is transverse to $p\times D_2$. Then $G_{Y,K}(\xi)$ is the homology class of the vector field on $Y$ that restricts to $v_{\xi}$ on $Y-\nu(K)$ and $v_{\nu(K)}$ on $\nu(K)$. 

The filling map is equivariant with respect to the action of $H^2$, in the sense that given an element $\alpha$ in $H^2(Y,K)$ 
\begin{equation*}
G_{Y,K}(\xi+\alpha)=G_{Y,K}(\xi)+i^*(\alpha).
\end{equation*}
where $i^*:H^2(Y,K)\to H^2(Y)$ is the map from the long exact sequence in cohomology.

\subsection{The Alexander grading and Alexander filtration}
Let $K$ be a knot in $Y$ and $(\Sigma,\b\alpha,\b\beta,w,z)$ be a corresponding doubly pointed Heegaard diagram. Then the set of relative $\s$-structures on $(Y,K)$ determines a filtration of the chain complex $\widehat{CF}(Y)$ via a map 
\begin{equation*}
\mathfrak{s}_{w,z}:\intpt\to \S(Y,K).
\end{equation*}
The construction of this map is described in \cite[Section 2.4]{OS-rationalsurgeries} and is similar to the construction of the map $\mathfrak{s}_w:\intpt\to \S(Y)$. 

When $K$ is a \emph{null-homologous} knot in $Y$, the filtration levels are labelled by integers via the Alexander grading \cite{OS-knotinvariants, Rasmussen-knots}. Later, Ni introduced a generalized Alexander grading for knots in rational homology spheres \cite{Ni-LinkFloer}. The definition we give uses the normalization and sign conventions of Hedden-Levine \cite{Hedden-Levine-surgery}. 

\begin{definition}
Let $K$ be a knot in a rational homology sphere $Y$ with corresponding doubly pointed Heegaard diagram $(\Sigma,\b\alpha,\b\beta,w,z)$. Fix a rational Seifert surface $F$ for $K$. For an intersection point $\mathbf{x}\in\intpt$ the \emph{Alexander grading} of $\mathbf{x}$ is given by
\begin{equation*}
A(\mathbf{x})=\frac{1}{2[\mu]\cdot[F]}(\langle c_1(\mathfrak{s}_{w,z}(\mathbf{x})),[F]\rangle+[\mu]\cdot[F]).
\end{equation*}
\end{definition}
For null-homologous knots, this definition coincides with the Alexander grading given by Rasmussen and Ozsv{\'a}th and Szab\'o. More generally, a pairing of this sort will exist for any rationally null-homologous knot $K$ in a 3-manifold. However, when $Y$ has $b_1>0$, the pairing will depend on the homology class of the rational Seifert surface $[F]$. In this article, we focus on knots in rational homology spheres, so the Alexander grading is independent of the choice of $F$.

The Alexander grading gives rise to a $\Q$-filtration of the Floer chain complex where $\mathcal{F}_q=\{\mathbf{x}\in\widehat{CF}(Y)| A(\mathbf{x})\leq q\}$ for each $q$ in $\Q$. Additionally, since $\widehat{CF}(Y)$ splits over $\s$-structures, the filtration splits as well. To define $\tau_\mathfrak{s}(Y,K)$, we are interested in the restriction of this filtration to a particular $\widehat{CF}(Y,\mathfrak{s})$ summand. Specifically, for each $\mathfrak{s}$ in $\S(Y)$, the $\Q$-valued Alexander filtration of $\widehat{CF}(Y,\mathfrak{s})$ can, in fact, be thought of as a $\Z$-filtration. 

To see this, fix a $\s$-structure $\mathfrak{s}$ in $\S(Y)$, and consider the set of lifts $G^{-1}_{Y,K}(\mathfrak{s})$. If $\xi$ and $\xi'$ are two lifts of $\mathfrak{s}$, then the difference $\xi'-\xi$ in $H^2(Y,K)$ is an element of $\ker(H^2(Y,K)\to H^2(Y))\cong \Z\langle \PD[\mu]\rangle$. Thus, $\xi'-\xi=m\PD[\mu]$ for some integer $m$. Equivalently, $\xi'=\xi+m\PD[\mu]$. Therefore, 
$$\langle c_1(\xi' ),[F]\rangle =\langle c_1(\xi),[F]\rangle+ 2m[\mu]\cdot[F].$$
Thus, if we consider all of elements of $G^{-1}_{Y,K}(\mathfrak{s})$ there is a unique choice of lift, which we denote by $\xi_0^\mathfrak{s}$ such that 
\begin{equation}\label{xi_0}
 -2[\mu]\cdot[F]< \langle c_1(\xi^\mathfrak{s}_0),[F]\rangle\leq 0.
\end{equation}

Now, for any $\mathbf{x}\in\intpt$ such that $\mathfrak{s}=\mathfrak{s}_w(\mathbf{x})=G_{Y,K}(\xi_0^\mathfrak{s})$ we know that $\mathfrak{s}_{w,z}(\mathbf{x})=\xi_0^\mathfrak{s}+m\PD[\mu]$ for some integer $m$. So we can write $A(\mathbf{x})=k_\mathfrak{s}+m$ where 
\begin{equation*}
k_\mathfrak{s}=\frac{1}{2[\mu]\cdot[F]}(\langle c_1(\xi_0^\mathfrak{s}),[F]\rangle +[\mu]\cdot[F]). 
\end{equation*}
From Equation \ref{xi_0} we have that $k_\mathfrak{s}$ is rational and lies in the interval $(-\frac{1}{2},\frac{1}{2}]$. The number $k_\mathfrak{s}$ also depends on the orientation of $K$ since the orientation of $K$ determines the orientation of $\mu$ and $F$.

Now we can see that filtration coming from the Alexander grading is really a $\Z$-filtration of $\widehat{CF}(Y,\mathfrak{s})$ where 
\begin{equation*}
\mathcal{F}_{\mathfrak{s}, m}=\{\mathbf{x}\in\widehat{CF}(Y,\mathfrak{s}) | A(\mathbf{x})-k_\mathfrak{s}=m\}
\end{equation*}
and if $q$ is any rational number, the restriction of the filtration level $\mathcal{F}_{q}$ to a particular $\widehat{CF}(Y,\mathfrak{s})$ summand of $\widehat{CF}(Y)$ is $\mathcal{F}_{\mathfrak{s},m}$ where $m$ is the largest integer such that $k_\mathfrak{s}+m\leq q$.

We first define $\tau_\mathfrak{s}(Y,K)$ in the simpler case that $Y$ is an $L$-space. Recall that $Y$ is an $L$-space if $\widehat{HF}(Y,\mathfrak{s})\cong\F_2$ for each $\s$-structure $\mathfrak{s}$ in $\S(Y)$. Then 
$$\tau_{\mathfrak{s}}(Y,K)= \min \{m |\; I_m:H_*(\mathcal{F}_{\mathfrak{s},m})\to \widehat{HF}(Y,\mathfrak{s}) \text{ is non-trivial }\}.$$ 

Note that when $Y$ is an $L$-space, $I_m$ is nontrivial if and only if $I_m$ is surjective. For a general 3-manifold $Y$, this may not be the case since $\widehat{HF}(Y,\mathfrak{s})$ may be a vector space of dimension greater than one. Thus, to define $\tau$ in general, we will need to use more of the structure of $CF^\infty(Y,\mathfrak{s})$.

\subsection{Some facts about $CF^\infty(Y)$} Let $Y$ be a 3-manifold with a pointed Heegaard diagram $(\Sigma,\b\alpha,\b\beta,w)$.  Recall from \cite{OS-threemanifolds} that $CF^\infty(Y,\mathfrak{s})$ is freely generated by elements $[\mathbf{x},i]$ where $\mathbf{x}\in\intpt$, $i\in\Z$, and the subcomplex $CF^-(Y)$ is obtained by restricting to the pairs $[\mathbf{x},i]$ where $i<0$. These fit into a short exact sequence 
\begin{equation*}
0\to CF^-(Y,\mathfrak{s})\xrightarrow{\iota}CF^\infty(Y,\mathfrak{s})\xrightarrow{\pi} CF^+(Y,\mathfrak{s})\to0
\end{equation*}
where $CF^+(Y)$ is the quotient complex. 
 
In addition, we have a chain map $U:CF^\infty(Y,\mathfrak{s})\to CF^\infty(Y,\mathfrak{s})$ defined by $[\mathbf{x},i]\mapsto[\mathbf{x},i-1]$ which induces an isomorphism on homology. The $U$ action on $CF^\infty(Y,\mathfrak{s})$ allows us to view $CF^\infty(Y,\mathfrak{s})$ as a finitely generated $\F_2[U,U^{-1}]$-module. 

The restriction of $U$ to either $CF^-(Y,\mathfrak{s})$ or $CF^+(Y,\mathfrak{s})$ does \emph{not} induce an isomorphism on homology. In fact, the complex $\widehat{CF}(Y,\mathfrak{s})$ is the kernel of $U$ applied to $CF^+(Y,\mathfrak{s})$. Thus we have another short exact sequence:
\begin{equation*}
0\to\widehat{CF}(Y,\mathfrak{s}) \xrightarrow{\rho} CF^+(Y,\mathfrak{s}) \xrightarrow{U}  CF^+(Y,\mathfrak{s})\to 0.
\end{equation*}

Alternatively we can view $\widehat{CF}(Y,\mathfrak{s})$ as a cokernel. For any $n\in\Z$, let $CF^{<n}(Y,\mathfrak{s})$ be the subcomplex of $CF^\infty(Y,\mathfrak{s})$ generated by $[\mathbf{x},i]$ where $i<n$. Then, $\widehat{CF}(Y,\mathfrak{s})$ also fits into the short exact sequence:
\begin{equation*}
0\to CF^{<1}(Y,\mathfrak{s})\xrightarrow{U} CF^{<1}(Y,\mathfrak{s}) \xrightarrow{\psi} \widehat{CF}(Y,\mathfrak{s})\to 0.
\end{equation*}

In addition, $U$ maps $CF^{<1}(Y,\mathfrak{s})$ isomorphically onto $CF^-(Y,\mathfrak{s})$. In particular, we have an isomorphism of short exact sequences:
\begin{equation*}
\xymatrix{
0\ar[r]& CF^{<1}(Y,\mathfrak{s})\ar[r]^{\hat\iota}\ar[d]^U &CF^\infty(Y,\mathfrak{s})\ar[r]^{\hat\pi}\ar[d]^U &CF^{\geq 1}(Y,\mathfrak{s})\ar[r]\ar[d]^U &0\\
0\ar[r]& CF^-(Y,\mathfrak{s})\ar[r]^{\iota}& CF^\infty(Y,\mathfrak{s})\ar[r]^{\pi}&CF^+(Y,\mathfrak{s})\ar[r]&0.
}
\end{equation*}

Let $HF^{<n}(Y,\mathfrak{s})$ denote the homology of the complex $CF^{<n}(Y,\mathfrak{s})$. Then, putting the above information together, we have that 
\begin{equation*}
\xymatrix{
HF^{<1}(Y,\mathfrak{s})\ar[r]^{\psi_*}\ar[dr]_{\hat{\iota}_*}& \widehat{HF}(Y,\mathfrak{s})\ar[r]^{\rho_*}& HF^+(Y,\mathfrak{s})\\
& HF^\infty(Y,\mathfrak{s})\ar[ur]_{\pi_*}&\\
}
\end{equation*}
commutes.

\subsection{The definition of $\tau_{\mathfrak{s}}(Y,K)$.}
Fix a $\s$-structure $\mathfrak{s}\in\S(Y)$, then for each $m\in\Z$ we have a short exact sequence:
\begin{equation*}
0\to \mathcal{F}_{\mathfrak{s},m}\xrightarrow{i_m}\widehat{CF}(Y,\mathfrak{s})\xrightarrow{p_m} \mathcal{Q}_{\mathfrak{s},m}\to 0
\end{equation*}
where $\mathcal{Q}_{\mathfrak{s},m}$ is the quotient. We denote the maps induced on homology by $i_m$ and $p_m$ as $I_m$ and $P_m$ respectively.

\begin{definition}
Let $K\subset Y$ be a knot in a rational homology sphere. Then
\begin{equation*}
\tau_{\mathfrak{s}}(Y,K):=\min\{k_\mathfrak{s}+m\:|\: \im(\rho_*\circ I_m)\cap\im(\pi_*)\neq 0\}
\end{equation*}
where $\rho_*:\widehat{HF}(Y,\mathfrak{s})\to HF^+(Y,\mathfrak{s})$ and $\pi_*:HF^\infty(Y,\mathfrak{s})\to HF^+(Y,\mathfrak{s})$ are the maps induced on homology by $\rho$ and $\pi$. 
\end{definition}

Putting together several exact triangles, we form the diagram: 
\begin{equation*}
\begin{tikzpicture}

\node at (-3,1.5){$H_*(\mathcal{F}_{\mathfrak{s},m})$};
\node at (1,1.5){$H_*(\mathcal{Q}_{\mathfrak{s},m})$};

\node at (-5,0){$HF^{<1}(Y, \mathfrak{s})$};
\node at (-1,0){$\widehat{HF}(Y, \mathfrak{s})$};
\node at (5,0){$HF^+(Y, \mathfrak{s})$};

\node at (0,-1.5){$HF^{<1}(Y, \mathfrak{s})\cong HF^-(Y, \mathfrak{s})$};

\node at (1,-3){$HF^\infty(Y, \mathfrak{s})$};

\draw[->] (-2.75,1.25)--(-1.75,.3); 
\node at (-2,1){$I_m$};
\draw[<-] (.75,1.25)--(-.35,.25); 
\node at (0,1){$P_m$};

\draw[->] (-3.9,0)--(-2,0); 
\node at (-2.75,.25){$\psi_*$};
\draw[->] (0,0)--(4,0); 
\node at (2,.25){$\rho_*$};

\draw[->] (-1,-.35)--(-1,-1.25); 
\node at (-.75,-.75){$\partial'$};
\draw[->] (1,-1.75)--(1,-2.75); 
\node at (1.25,-2.25){$\iota_*$};

\draw[->] (4.5,-.25)--(2,-1.25); 
\node at (3.5,-1){$\partial$};
\draw[<-] (-4,-.25)--(-2,-1.25); 
\node at (-3,-1.1){$U$};

\draw[->] (-5,-.25)..controls +(down: 14mm) and +(left: 14mm)..(0,-3); 
\node at (-3,-2.5){$\hat\iota_*$};
\draw[<-] (5,-.25)..controls +(down: 10mm) and +(right: 10mm)..(2,-3); 
\node at (4,-2.5){$\pi_*$};

\end{tikzpicture}
\end{equation*}
which commutes.

\begin{lemma}\label{TFAE} Fix a $\s$-structure $\mathfrak{s}$ in $\S(Y)$ and an integer $m$. Then the following are equivalent:
\begin{enumerate}
\item There exists an element $\beta$ in $H_*(\mathcal{F}_{\mathfrak{s},m})$ such that $\rho_*\circ I_m(\beta)\neq 0$ and $\rho_*\circ I_m(\beta)$ is in $\im(\pi_*)$.
\item There exists a non $U$-torsion element $\alpha$ in $HF^{<1}(Y,\mathfrak{s})$ and an element $\beta$ in $H_*(\mathcal{F}_{\mathfrak{s},m})$ such that $I_m(\beta)=\psi_*(\alpha)\neq 0$.
\item There exists an element $\alpha$ in $HF^{<1}(Y,\mathfrak{s})$ such that $\pi_*\circ\hat\iota(\alpha)\neq0$ and $P_m(\psi_*(\alpha))=0$. 
\end{enumerate}
\end{lemma}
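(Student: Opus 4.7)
The plan is to prove the three-way equivalence by establishing $(1) \Leftrightarrow (3)$ via a key algebraic identity between connecting homomorphisms, and then showing $(3) \Leftrightarrow (2)$ by elementary arguments exploiting the $U$-torsion structure of $HF^\infty(Y,\mathfrak{s})$.

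First I derive the identity $U \circ \partial' = \partial \circ \rho_*$ on homology, where $\partial' \colon \widehat{HF}(Y,\mathfrak{s}) \to HF^{<1}(Y,\mathfrak{s})$ is the connecting homomorphism of the short exact sequence $0 \to CF^{<1} \xrightarrow{U} CF^{<1} \xrightarrow{\psi} \widehat{CF} \to 0$, and $\partial \colon HF^+ \to HF^-$ is the standard connecting map. This follows from a direct chain-level computation using the isomorphism of short exact sequences recalled in the excerpt: given a cycle $\hat c$ representing $\gamma \in \widehat{HF}(Y,\mathfrak{s})$, lift it to $\widetilde c \in CF^{<1}$; then $\partial \widetilde c$ lies in $\ker\psi = U \cdot CF^{<1} \cong CF^-$, and its class represents $\partial\rho_*(\gamma)$ in $HF^-$ while $U^{-1}\partial\widetilde c$ represents $\partial'(\gamma)$ in $HF^{<1}$, so the two agree via the vertical iso $U$.

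For $(1) \Leftrightarrow (3)$: the direction $(3) \Rightarrow (1)$ is immediate—take $\beta$ with $I_m(\beta) = \psi_*(\alpha)$, which exists since $P_m(\psi_*(\alpha)) = 0$, and compute $\rho_* I_m(\beta) = \rho_*\psi_*(\alpha) = \pi_*\hat\iota_*(\alpha) \neq 0$ in $\im\pi_*$. For $(1) \Rightarrow (3)$, the hypothesis gives $\rho_* I_m(\beta) \in \im\pi_* = \ker\partial$, hence $\partial\rho_* I_m(\beta) = 0$, so by the identity $U\partial' I_m(\beta) = 0$, and since $U \colon HF^{<1} \to HF^-$ is an isomorphism, $\partial' I_m(\beta) = 0$. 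Thus $I_m(\beta) \in \ker\partial' = \im\psi_*$, so any $\alpha$ with $\psi_*(\alpha) = I_m(\beta)$ satisfies $\pi_*\hat\iota_*(\alpha) = \rho_* I_m(\beta) \neq 0$ and $P_m(\psi_*(\alpha)) = 0$.

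For $(3) \Leftrightarrow (2)$: $(3) \Rightarrow (2)$ is direct, since $\pi_*\hat\iota_*(\alpha) \neq 0$ forces $\hat\iota_*(\alpha) \neq 0$ in $HF^\infty$; because $HF^\infty(Y,\mathfrak{s}) \cong \F[U,U^{-1}]$ is $U$-torsion free for a rational homology sphere, we get $U^n\alpha \neq 0$ for all $n$, so $\alpha$ is non-torsion, while $\psi_*(\alpha) = I_m(\beta)$ and its non-vanishing are automatic. The converse $(2) \Rightarrow (3)$ is the main obstacle. Supposing for contradiction that $\pi_*\hat\iota_*(\alpha) = 0$, one gets $\hat\iota_*(\alpha) \in \ker\pi_* = \im\iota_*$; using the iso $U \colon HF^{<1} \to HF^-$, this yields a decomposition $\alpha = U\widetilde\gamma + \xi$ with $\xi \in \ker\hat\iota_*$ torsion, forcing $\psi_*(\alpha) = \psi_*(\xi) \in \ker\rho_*$. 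The non-torsion hypothesis on $\alpha$ combined with $\psi_*(\alpha) \neq 0$ must then be leveraged—via the rigid $U$-module structure of $HF^\infty$ and the compatibility of $\psi$ with the Alexander filtration—to produce a modified element $\alpha'$ at the same filtration level with $\pi_*\hat\iota_*(\alpha') \neq 0$, recovering (3). The delicate point is precisely this modification, which is where the interaction between the $U$-tower decomposition of $HF^{<1}$ and the Alexander filtration is essential.
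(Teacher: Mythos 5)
Your preliminary identity $U_*\circ\partial'=\partial\circ\rho_*$ is correct, and it is exactly the commutativity that the paper's big diagram encodes; with it, your arguments for $(1)\Rightarrow(3)$, $(3)\Rightarrow(1)$ and $(3)\Rightarrow(2)$ are complete and essentially reproduce the paper's reasoning (the paper runs the cycle $(1)\Rightarrow(2)\Rightarrow(3)\Rightarrow(1)$, and your $(1)\Rightarrow(3)$ is its $(1)\Rightarrow(2)$ step together with the trivial observation $P_m\circ I_m=0$, while your torsion-freeness remark for $(3)\Rightarrow(2)$ matches the paper's use of $\ker(\hat\iota_*)=\mathrm{Tors}$ for a rational homology sphere).

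The genuine gap is $(2)\Rightarrow(3)$ (equivalently $(2)\Rightarrow(1)$), and you do not close it: after isolating the problematic configuration $\alpha=U\widetilde\gamma+\xi$ with $\xi\in\ker\hat\iota_*$ torsion and $\psi_*(\alpha)=\psi_*(\xi)\neq 0$, you say a modified $\alpha'$ with $\pi_*\circ\hat\iota_*(\alpha')\neq0$ and $P_m\psi_*(\alpha')=0$ ``must be produced,'' which is a restatement of what has to be proved, not a proof. Moreover the configuration is not formally excluded by the hypotheses of $(2)$: if $\xi$ is torsion with $0\neq\psi_*(\xi)\in\im(I_m)$ and $x_0$ generates the tower of $HF^{<1}(Y,\mathfrak{s})$, then $\alpha=Ux_0+\xi$ is non-torsion, satisfies $\psi_*(\alpha)=\psi_*(\xi)=I_m(\beta)\neq0$, yet $\pi_*\circ\hat\iota_*(\alpha)=0$; passing from this to $(3)$ amounts to showing that the class $c_Y$ of Lemma \ref{c_Y} still lies in $\rho_*(\im I_m)$, which is a statement about the Alexander filtration and does not follow from the exact triangles you invoke. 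For comparison, the paper handles this implication in one sentence, asserting that $\alpha$ non-torsion together with $\psi_*(\alpha)\neq0$ forces $\pi_*\circ\hat\iota_*(\alpha)\neq0$ --- i.e.\ it treats as immediate precisely the step you flag as delicate --- so as submitted your argument establishes only $(1)\Leftrightarrow(3)$ and $(3)\Rightarrow(2)$; you must either justify that assertion (ruling out the $U\widetilde\gamma+\xi$ configuration, or constructing the modified $\alpha'$ explicitly) or acknowledge that the full three-way equivalence remains unproved in your write-up.
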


\begin{proof}
Assuming (1), there exists an element $\beta$ in $H_*(\mathcal{F}_{\mathfrak{s},m})$ such that $\rho_*\circ I_m(\beta)\neq 0$ and $\rho_*\circ I_m(\beta)$ is in $\im(\pi_*)$. By commutativity of the diagram and exactness, $\partial\circ\rho_*\circ I_m(\beta)=\partial'\circ I_m(\beta)=0$. Thus, there exists some element $\alpha$ in $HF^{<1}(Y,\mathfrak{s})$ so that $I_m(\beta)=\psi_*(\alpha)$. In addition, $\rho_*\circ I_m(\beta)=\rho_*\circ\psi_*(\alpha)=\pi_*\circ\hat\iota_*(\alpha)$ so we must have that $\alpha$ is non-torsion.

Now assume (2). Since $\alpha$ is non-torsion, and $I_m(\beta)=\psi_*(\alpha)\neq 0$, we must have that $\pi_*\circ\hat\iota(\alpha)\neq 0$. Thus, $\rho_*\circ\psi_*(\alpha)=\rho_*\circ I_m(\beta)\neq 0\in\im(\pi_*)$. Thus, $\psi_*(\alpha)$ is in $\ker(P_m)$.

Finally, (3) implies (1) since $\psi_*(\alpha)\neq 0$ and $\psi_*(\alpha)$ is in $\ker(P_m)=\im(I_m)$. 
\end{proof}

\noindent The lemma suggests an alternative formulation of $\tau_{\mathfrak{s}}(Y,K)$ in terms of $HF^{<1}(Y,\mathfrak{s})$. 
\begin{definition} Let $K\subset Y$ be a knot in a rational homology sphere. Then
\begin{equation*}
\tau^-_{\mathfrak{s}}(Y,K):=\max_{\substack{\alpha\in HF^{<1}(Y,\mathfrak{s})\\ \pi_*\circ \hat\iota_*(\alpha)\neq 0}}\{k_\mathfrak{s}+m\; |\;  P_{m}\circ\psi_*(\alpha)\neq 0\}
\end{equation*}
\end{definition}
The two definitions are related in the following way:
\begin{proposition} \label{tau-}
Let $Y$ be a rational homology 3-sphere, and $K\subset Y$. Then for each $\mathfrak{s}\in\S(Y)$,
\begin{equation*}
\tau_\mathfrak{s}^-(Y,K)=\tau_\mathfrak{s}(Y,K)-1.
\end{equation*}
\end{proposition}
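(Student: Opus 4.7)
The plan is to prove both inequalities $\tau_\mathfrak{s}^- \geq \tau_\mathfrak{s} - 1$ and $\tau_\mathfrak{s}^- \leq \tau_\mathfrak{s} - 1$, using Lemma \ref{TFAE} throughout. Call an $\alpha \in HF^{<1}(Y,\mathfrak{s})$ \emph{valid} if $\pi_*\hat\iota_*(\alpha) \neq 0$, and for such an $\alpha$ write $M(\alpha)$ for the smallest integer $m$ with $P_m\psi_*(\alpha) = 0$. Lemma \ref{TFAE} reformulates the definition so that $\tau_\mathfrak{s} - k_\mathfrak{s} = \min_\alpha M(\alpha)$, while unwinding the definition of $\tau_\mathfrak{s}^-$ yields $\tau_\mathfrak{s}^- - k_\mathfrak{s} = \max_\alpha M(\alpha) - 1$. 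The proposition thus reduces to showing $M(\alpha)$ is constant on valid $\alpha$.

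The inequality $\tau_\mathfrak{s}^- \geq \tau_\mathfrak{s} - 1$, equivalently $\min_\alpha M(\alpha) \leq \max_\alpha M(\alpha)$, is automatic: by the minimality of $\tau_\mathfrak{s}$, condition (3) of Lemma \ref{TFAE} fails at $m = \tau_\mathfrak{s} - k_\mathfrak{s} - 1$, so every valid $\alpha$ satisfies $P_{\tau_\mathfrak{s} - k_\mathfrak{s} - 1}\psi_*(\alpha) \neq 0$. Taking any one valid $\alpha$, which exists because $\tau_\mathfrak{s}$ is finite, provides the desired witness.

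For the reverse inequality, the key structural input is that $Y$ is a rational homology sphere, so $\im(\pi_*) \cap \im(\rho_*) \subset HF^+(Y,\mathfrak{s})$ is one-dimensional, generated by the bottom of the $HF^+$-tower. Combined with $\rho_*\psi_* = \pi_*\hat\iota_*$ from the commutative diagram, this forces $\pi_*\hat\iota_*(\alpha)$ to take the same nonzero value for every valid $\alpha$; hence any two valid classes $\alpha, \alpha'$ differ by an element of $\ker(\pi_*\hat\iota_*)$. Fixing $\alpha_0$ realizing $M(\alpha_0) = m_0 := \tau_\mathfrak{s} - k_\mathfrak{s}$ via Lemma \ref{TFAE}, constancy of $M$ reduces to the claim that $\psi_*(\gamma) \in \im(I_{m_0})$ for every $\gamma \in \ker(\pi_*\hat\iota_*)$.

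This last claim is the main obstacle. One decomposes $\ker(\pi_*\hat\iota_*) = U\cdot HF^{<1} + \ker(\hat\iota_*)$, noting that $\ker(\hat\iota_*)$ consists of $U$-torsion classes since $HF^\infty(Y,\mathfrak{s}) \cong \F_2[U,U^{-1}]$ is $U$-torsion-free. The short exact sequence $0 \to CF^{<1} \xrightarrow{U} CF^{<1} \xrightarrow{\psi} \widehat{CF} \to 0$ gives $\psi_*(U\cdot HF^{<1}) = 0 \subseteq \im(I_{m_0})$ automatically, so the real work is in the $\ker(\hat\iota_*)$ summand: for such a $\gamma$, a filtered chain-level argument, choosing a cycle representative of $\gamma$ in $CF^{<1}$, bounding its image in $CF^\infty$, and carefully tracking the bounding chain through the filtration-respecting commutative diagram of short exact sequences, produces a representative of $\psi_*(\gamma)$ at filtration level at most $m_0$.
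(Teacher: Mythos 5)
Your reduction is set up correctly for the reading of the definition you chose, and the easy inequality is fine, but the step you yourself flag as the main obstacle is a genuine gap: you never prove that $\psi_*(\gamma)\in\im(I_{m_0})$ for $\gamma\in\ker(\hat\iota_*)$, and no diagram-chasing argument of the kind you sketch can exist, because nothing in the filtered short exact sequences ties the Alexander filtration level of $\psi_*$ of a $U$-torsion class to $\tau_\mathfrak{s}$. Concretely, consider a model in which $\widehat{CF}(Y,\mathfrak{s})$ has generators $a,b,c$ with vanishing differential, the differential on $CF^\infty$ is $\partial c=Ua+U^2b$, and the Alexander levels are $A(b)=0$ and $A(a)$ large. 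Then $HF^{<1}(Y,\mathfrak{s})\cong\F_2[U]\oplus\F_2[U]/U$, the torsion class $\gamma=[a+Ub]$ satisfies $\psi_*(\gamma)=[a]$, which sits at filtration level $A(a)$, while the tower class $[b]$ realizes $\tau_\mathfrak{s}$ at level $0$; both $[b]$ and $[b]+\gamma$ are valid, but in your notation $M([b])=0$ and $M([b]+\gamma)=A(a)$, so $M$ is not constant, and this model is compatible with everything your argument uses (including Lemma \ref{c_Y}). So the claim your proof hinges on is false at the level of filtered homological algebra, and constancy of $M$ is not the content of the proposition.

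The resolution is that the definition of $\tau^-_{\mathfrak{s}}$ is intended with a universal quantifier over $\alpha$: it is the maximum of $k_\mathfrak{s}+m$ such that \emph{every} $\alpha\in HF^{<1}(Y,\mathfrak{s})$ with $\pi_*\circ\hat\iota_*(\alpha)\neq 0$ satisfies $P_m\circ\psi_*(\alpha)\neq 0$; in your notation this is $k_\mathfrak{s}+\min_\alpha M(\alpha)-1$, not $k_\mathfrak{s}+\max_\alpha M(\alpha)-1$, and it is the quantity the paper's proof works with. With that reading the proposition follows immediately from Lemma \ref{TFAE} alone: condition (3) says $\tau_\mathfrak{s}-k_\mathfrak{s}$ is the least $m$ at which some valid $\alpha$ has $P_m\circ\psi_*(\alpha)=0$, and since for each fixed $\alpha$ the set of $m$ with $P_m\circ\psi_*(\alpha)=0$ is upward closed (as $\im(I_m)\subseteq\im(I_{m+1})$), the largest $m$ at which all valid $\alpha$ survive is exactly $\tau_\mathfrak{s}-k_\mathfrak{s}-1$. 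No appeal to the one-dimensionality of $\im(\rho_*)\cap\im(\pi_*)$, to the decomposition of $\ker(\pi_*\circ\hat\iota_*)$, or to any chain-level filtration argument is needed.
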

		
\begin{proof}
Suppose $k_\mathfrak{s}+m=\tau_{\mathfrak{s}}(Y,K)$. Then by Lemma \ref{TFAE}, $k_\mathfrak{s}+m'$ where $m'=m-1$ must be the maximum for which every $ \alpha\in HF^{<1}(Y,\mathfrak{s})$ such that $\pi_*\circ \hat\iota_*(\alpha)\neq 0$ satisfies $P_{m'}\circ\psi_*(\alpha)\neq 0$. 
\end{proof}

\subsection{Framings for rationally null-homologous knots}\label{framings}
	For any oriented knot $K$ in any 3-manifold there is a well defined \emph{meridian} $\mu_K$, namely the homology class of the curve that the generates the kernel of the map 
\begin{equation*}H_1(\partial\nu(K))\to H_1(\nu(K)).\end{equation*}
Geometrically, the meridian is the curve in $\partial\nu(K)$ that bounds a disk in $\nu(K)$ and is oriented so that its intersection number with $K$ is $+1$. A \emph{longitude} for $K$ is any choice of curve $\lambda$ such that $(\mu_K,\lambda)$ forms a basis for $H_1(\partial\nu(K))\cong \Z\oplus\Z$. When $K$ is null-homologous, there is a canonical choice of longitude called the Seifert-framing or the 0-framing given by the curve $\lambda_0=F\cap \partial\nu(K)$, where $F$ is a Seifert surface for $K$. 

If $K$ is only rationally null-homologous, we use a rational Seifert surface to determine a \emph{canonical longitude} for $K$, following Mark and Tosun \cite{Mark-Tosun}. First notice that the map
\begin{equation*}
i_*:H_1(\partial\nu(K))\to H_1(Y-\nu(K))
\end{equation*}
induced by inclusion has kernel isomorphic to $\Z$.

In particular, if $F$ is a rational Seifert surface for $K$, then $F\cap\partial\nu(K)=\partial F$ is a set of closed curves on $\partial \nu(K)$. While $[\partial F]$ itself may not be a primitive element in $H_1(\partial \nu(K))$, there exists an integer $c$ and a primitive element $\gamma$ so that $[\partial F]=c\gamma$. The integer $c$ is called the \emph{complexity} of $F$ and is equal to the number of boundary components of $F$. If $F$ is a $q$-Seifert surface, and $\mu$ is the meridian of $K$ we have $[\partial F]\cdot [\mu]=c[\gamma]\cdot[\mu]=q$. Therefore, if the complexity $c$ is equal to $q$, then $\gamma$ will be an honest longitude for $K$. Otherwise, for any choice of longitude $\lambda$ we have the following decomposition:
\begin{equation*}
[\partial F]=c\gamma=c(d\lambda+r\mu)
\end{equation*}
where $cd=q$ and $r$ is an integer. Note that the number $\frac{r}{d}$ in $\Q\slash\Z$ is the rational self linking number of $K$. 

This decomposition depends on the choice of longitude in the sense that any other choice of longitude for $K$ can be written as $\lambda_m=\lambda+m\mu$ for some integer $m$. Changing basis from $(\lambda,\mu)$ to $(\lambda_m,\mu)$ we have: $$[\partial F]=c(d(\lambda_m-m\mu)+r\mu)=c(d\lambda_m+(r-md)\mu)=c(d\lambda_m+r'\mu).$$ As we can see $r'=r-md$ differs from $r$ by an integer multiple of $d$. Thus, there is a unique choice of longitude, which we call the \emph{canonical longitude} $\lambda_{\can}$ such that 
\begin{equation*}
[\partial F]=c(d\lambda_{\can}+r\mu) \quad\quad \text{and} \quad\quad 0\leq r<d.
\end{equation*} 
In other words, $\lambda_{\can}$ is the choice of longitude for which $\frac{r}{d}$ is the unique representative of the rational self-linking number of $K$ in $[0,1)$.

\subsection{Relative periodic domains and the $c_1$-evaluation formula} The following shows how to compute the Alexander grading of a generator directly from a doubly pointed Heegaard diagram for $(Y,K)$.

Recall that for knots in the 3-sphere we can compute the Alexander grading using periodic domains on the Heegaard diagram for $0$-surgery along $K$ \cite{OS-knotinvariants}. Here we describe an analogous construction for knots in rational homology spheres. 

Let $(Y,K)$ be a rationally null-homologous knot with doubly pointed Heegaard diagram $(\Sigma,\b\alpha,\b\beta,w,z)$ constructed so that $\beta_g=\mu$ and $\beta_g$ intersects the $\b\alpha$-curves in a single point on $\alpha_g$. Let $\lambda$ be a curve on $\Sigma$ which is a longitude for $K$. The following definition is given by Hedden and Plamenevskaya \cite{Hedden-Plam}.

\begin{definition}
Let $(Y,K)$ be a rationally null-homologous knot in a 3-manifold with Heegaard diagram and longitude $\lambda$ as described above. Let  $D_1,D_2,\ldots D_r$ be the closures of the components of $\Sigma-(\b\alpha\cup\b\beta\cup\lambda)$. A \emph{relative periodic domain} is a relative 2-chain $\mathcal{P}=\sum_i a_iD_i$ with
\begin{equation*}
\partial\mathcal{P}=q\lambda+\sum_{i=1}^gn_{\alpha_i}\alpha_i+\sum_{i=1}^gn_{\beta_i}\beta_i
\end{equation*}
where the coefficients $a_i$ are the local multiplicities of $\mathcal{P}$. 
\end{definition}

A relative periodic domain gives rise to a map $\Phi:F\to\Sigma$ where $F$ is an oriented surface with boundary such that $\partial F$ maps into $\b\alpha\cup\b\beta\cup\b\gamma$. 

\begin{lemma}\label{periodicdom}
Given a rational q-Seifert surface, $F$ for $K$, satisfying $\partial F=q\lambda_{\can}+cr\mu$ we can always find a relative periodic domain $\mathcal{P}_F$ for $F$ which satisfies 
\begin{equation*}
\partial \mathcal{P}_{F}=q\lambda_{\can}+cr\mu+q\alpha_g+\sum_{i=1}^{g-1} n_{\alpha_i}\alpha_i +\sum_{i=1}^{g-1} n_{\beta_i}\beta_i
\end{equation*}
where $\lambda_{\can}$ is the canonical longitude for $K$.
\end{lemma}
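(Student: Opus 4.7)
The plan is to construct $\mathcal{P}_F$ geometrically by pushing $F$ onto the Heegaard surface $\Sigma$ using the handlebody decomposition $Y=U_{\b\alpha}\cup_\Sigma U_{\b\beta}$, and then to pin down the coefficient of $\alpha_g$ via the intersection form on $\Sigma$.

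First I would isotope $F$ so that it meets $\Sigma$ transversely with $\partial F\subset\Sigma$; this is possible since $\lambda_{\can}$ and $\mu$ both lie on $\Sigma$ by assumption. The intersection $F\cap\Sigma$ then decomposes $F$ into pieces $F_{\b\alpha}\subset U_{\b\alpha}$ and $F_{\b\beta}\subset U_{\b\beta}$. Using the description of $U_{\b\alpha}$ as $\Sigma\times[0,1]$ with 2-handles attached along the $\b\alpha$-curves (together with a 3-ball cap that does not affect 2-chains), the piece $F_{\b\alpha}$ retracts onto $\Sigma$ plus the cores of the 2-handles, yielding a 2-chain $C_{\b\alpha}$ on $\Sigma$ with $\partial C_{\b\alpha}=(F_{\b\alpha}\cap\Sigma)+\sum_i n_i^\alpha\alpha_i$. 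Performing the analogous procedure in $U_{\b\beta}$ gives $C_{\b\beta}$, and combining these 2-chains with the orientations inherited from $F$ produces a 2-chain $\mathcal{P}_F$ on $\Sigma$ whose boundary has the form $q\lambda_{\can}+cr\mu+\sum_{i} n_{\alpha_i}\alpha_i+\sum_{j<g} n_{\beta_j}\beta_j$, the $\beta_g=\mu$ contribution being absorbed into the $cr\mu$ term (which matches $\partial F$ by construction, since any extra $\mu$-multiplicity would correspond to $F_{\b\beta}$ wrapping a meridional disk of $K$, which cannot happen for a surface in the knot exterior).

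The main step is to verify $n_{\alpha_g}=q$. Since $\partial\mathcal{P}_F$ is a 1-boundary on $\Sigma$, its class in $H_1(\Sigma)$ is trivial, so its algebraic intersection with every simple closed curve on $\Sigma$ vanishes. Pairing $\partial\mathcal{P}_F$ with $\mu$ on $\Sigma$, and using that $\mu=\beta_g$ meets no other $\b\alpha$- or $\b\beta$-curve (and $\mu\cdot\mu=0$), gives
$$0 = q(\lambda_{\can}\cdot\mu) + n_{\alpha_g}(\alpha_g\cdot\mu).$$
By hypothesis $\alpha_g\cdot\mu=\pm 1$. Writing $\lambda_{\can}=K+m\mu$ on $\Sigma$, where $K=\nu_{\b\alpha}\cup\nu_{\b\beta}$ is the natural representative of the knot on the Heegaard surface, and using that $\nu_{\b\beta}$ is disjoint from $\mu$ while $\nu_{\b\alpha}$ crosses $\mu$ exactly once (because $\mu$ separates $w$ from $z$), we get $\lambda_{\can}\cdot\mu=\pm 1$ on $\Sigma$. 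Consequently $n_{\alpha_g}=\pm q$, and a careful sign check fixes the value to $+q$.

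The main obstacle I expect is the orientation bookkeeping required to fix the sign at the end. One needs to align the orientations of $F$, of the induced boundary $\partial F=q\lambda_{\can}+cr\mu$, of the meridian $\mu$, of $\alpha_g$, and of $\Sigma$ so that the displayed identity yields $n_{\alpha_g}=+q$ and not $-q$; this involves tracing through the standard convention for the boundary of a 2-chain on an oriented surface together with the orientation conventions used in the doubly pointed Heegaard diagram. Once these conventions are pinned down, the formula follows directly.
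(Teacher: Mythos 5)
Your proposal is correct, but it reaches the conclusion by a genuinely different route than the paper. The paper does not rebuild the domain from $F$: it starts from an arbitrary relative periodic domain representing $F$ (existence being part of the Hedden--Plamenevskaya framework), rewrites its boundary via the substitution $\lambda=\lambda_{\can}+n\mu$, pins the total $\mu$-coefficient to $cr$ by matching $[\partial F]=q\lambda_{\can}+cr\mu$, and then reads $n_{\alpha_g}=q$ off the local multiplicities in the annular neighborhood of $\mu$ shown in Figure \ref{WR}. You instead construct $\mathcal{P}_F$ by compressing $F$ through the two handlebodies (the standard argument identifying relative classes with relative periodic domains), and you determine the $\alpha_g$-coefficient from the purely homological fact that $[\partial\mathcal{P}_F]=0$ in $H_1(\Sigma)$, paired against $[\mu]$, using that $\mu$ meets no attaching curve other than $\alpha_g$ and that $\lambda_{\can}\cdot\mu=\pm1$. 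That computation is valid and is arguably cleaner and figure-free; what it buys is checkability, at the cost of determining $n_{\alpha_g}$ only up to sign (the sign is in any case a matter of the orientation convention on $\alpha_g$, fixed in the paper by the figure). What the paper's local argument buys instead is the explicit multiplicity pattern of $\mathcal{P}_F$ in the winding region, which is reused verbatim in the proof of Lemma \ref{Alexandergrading} (the values $n_{p}(\mathcal{P}_F)$ and $\bar n_{w,z}(\mathcal{P}_F)$); your homological argument does not produce those local multiplicities. One small caution: your parenthetical reason that no $\beta_g$-disk multiples appear should really be phrased as performing the compression inside $Y-\nu(K)$, where the meridian disk is unavailable, since on $\Sigma$ the boundary coefficient along $\mu$ cannot distinguish boundary on $\partial\nu(K)$ from boundary along $\beta_g$; this is the same point the paper handles by matching $[\partial F]$, and your treatment of it is at essentially the same level of detail as the paper's.
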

\begin{proof}
Fix a Heegaard diagram for $(Y,K)$ where $\beta_{g}=\mu$ and $\beta_g$ and $\alpha_g$ intersect in a single point $p$. For any choice of longitude, there is a small neighborhood of $\mu$ where $\lambda\cap\alpha_g=\varnothing$ as shown in Figure \ref{WR}.
\begin{figure}
\begin{center}
\includegraphics[scale=1]{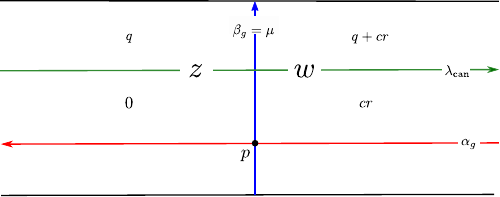}
\caption{A small neighborhood of $\mu$.}
\label{WR}
\end{center}
\end{figure}

Let $\mathcal{P}_F$ be any periodic domain for $F$. Then by definition,
\begin{align*}
\partial\mathcal{P}_F&=q\lambda+\sum_{i=1}^gn_{\alpha_i}\alpha_i+\sum_{i=1}^gn_{\beta_i}\beta_i\\
	&=q\lambda+n_{\alpha_g}\alpha_g+n_{\beta_g}\mu+\sum_{i=1}^{g-1}n_{\alpha_i}\alpha_i+\sum_{i=1}^{g-1}n_{\beta_i}\beta_i.
\end{align*}
Since $\lambda=\lambda_{\can}+n\mu$ for some integer $n$ we can change coordinates:\begin{align*}
\partial\mathcal{P}_F&=q(\lambda_{\can}+n\mu)+n_{\alpha_g}\alpha_g+n_{\beta_g}\mu+\sum_{i=1}^{g-1}n_{\alpha_i}\alpha_i+\sum_{i=1}^{g-1}n_{\beta_i}\beta_i.
\end{align*}

Now, since $\mathcal{P}_F$ represents $F$, we should be able to cap off $\mathcal{P}_F$ with sums of $\b\alpha$ and $\b\beta$ curves to obtain a representative of $F$ with $[\partial F]=q\lambda_{\can}+cr\mu$. Therefore, we must have $n_{\beta_g}=cr-nq$ and from inspecting the diagram in Figure \ref{WR} we have that $n_{\alpha_g}=q$.
\end{proof}

In addition, any relative periodic domain gives rise to a relative homology class in $H_2(Y-\nu(K),\partial(Y-\nu(K))$. Recall that when $K$ is null-homologous, there is a $c_1$-evaluation formula for periodic domains \cite{OS-threemanifolds}. Hedden and Levine provide a similar formula for relative periodic domains. First, recall the definition of the Euler measure:
\begin{definition} [\cite{OS-properties}]
Let $\mathcal{P}=\sum_i a_iD_i$ be a (relative) periodic domain. The \emph{Euler measure of $\mathcal{P}$} is given by,
\begin{equation*}
\hat\chi(\mathcal{P})=\sum_i n_i(\chi(D_i)-\frac{1}{4}\# (\text{corners in }D_i)).
\end{equation*}
\end{definition}
\noindent Then, we have a $c_1$-evaluation formula for relative periodic domains: 
\begin{proposition}[\cite{Hedden-Levine-surgery}]\label{Hedden-Levine}
Let $K$ be a rationally null-homologous knot in $Y$ with rational $q$-Seifert surface $F$. Let $(\Sigma,\b\alpha,\b\beta,w,z)$ be a doubly pointed Heegaard diagram for $(Y,K)$. For any relative periodic domain $\mathcal{P}$ representing $F$ and any $\mathbf{x}$ in $\intpt$,
\begin{equation*}
\langle c_1(\mathfrak{s}_{w,z}(\mathbf{x})),[F]\rangle+q=\hat\chi(\mathcal{P})+2n_{\mathbf{x}}(\mathcal{P})-\bar{n}_{w,z}(\mathcal{P}).
\end{equation*}
\end{proposition}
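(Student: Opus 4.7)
My strategy is to adapt the proof of the absolute $c_1$-evaluation formula of \cite{OS-threemanifolds} to the relative setting, carefully tracking the new boundary and basepoint contributions that distinguish the two cases.

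First, I would represent the relative Spin$^c$ structure $\mathfrak{s}_{w,z}(\b{x})$ by an explicit non-vanishing vector field $v$ on $Y - \nu(K)$. Starting from the negative gradient of a Morse function compatible with the Heegaard diagram, one modifies $v$ by canceling its zeros near each coordinate $x_i$ of $\b{x}$, and by replacing the flow arcs $\gamma_w$ and $\gamma_z$ (through the two basepoints) by a local model in which $K = \gamma_w \cup \gamma_z$ becomes a closed orbit and $v$ points outward along $\partial\nu(K)$. This parallels the construction used in the absolute case for $\mathfrak{s}_w(\b{x})$, except that both basepoint arcs are now essential, which is the ultimate source of the $-\bar{n}_{w,z}$ correction.

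Second, I would realize the class $[F]$ geometrically from $\mathcal{P}$. Doubling each region $D_i$ with multiplicity $a_i$, capping off the $\b\alpha$-boundary components with compressing disks in $U_{\b\alpha}$ and the $\b\beta$-boundary components with compressing disks in $U_{\b\beta}$, produces a properly embedded surface $F_{\mathcal{P}} \subset Y - \nu(K)$. Its remaining boundary $q\lambda$ lies on $\partial\nu(K)$, so $F_{\mathcal{P}}$ represents the prescribed class $[F] \in H_2(Y - \nu(K), \partial\nu(K))$.

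The main step is then to compute $\langle c_1(\mathfrak{s}_{w,z}(\b{x})), [F]\rangle$ as the Euler number of the plane field $v^{\perp}$ restricted to $F_{\mathcal{P}}$, via an oriented count of zeros of a generic section. Following \cite{OS-threemanifolds}, this count decomposes into four pieces: (i) an integral contribution of $\hat\chi(\mathcal{P})$ from the portion of $F_{\mathcal{P}}$ lying over $\Sigma$, via Gauss--Bonnet / Poincar\'e--Hopf together with the corner corrections in the Euler measure; (ii) signed zeros produced by the cancellations near each $x_i$, totaling $2n_{\b{x}}(\mathcal{P})$; (iii) signed contributions from the two modifications near the basepoints $w$ and $z$, weighted by $n_w(\mathcal{P})$ and $n_z(\mathcal{P})$, combining to $-\bar{n}_{w,z}(\mathcal{P})$ (whereas only the $w$-arc appears in the absolute formula); and (iv) a boundary contribution of $+q$ from the outward-pointing condition for $v$ along $\partial\nu(K)$, since the boundary component $q\lambda$ of $F_{\mathcal P}$ wraps $q$ times around the longitude. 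Summing these gives $\langle c_1, [F]\rangle = \hat\chi(\mathcal P) + 2 n_{\b x}(\mathcal P) - \bar{n}_{w,z}(\mathcal P) + q$, which is the claimed identity.

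The main obstacle will be executing step (iii)--(iv) precisely: verifying that the local signed contributions from the two basepoint modifications and from the outward-pointing constraint on $\partial\nu(K)$ yield exactly $-\bar{n}_{w,z}(\mathcal{P}) + q$, with no cross-terms or additional corrections arising from the interaction of the basepoints, the meridian, and the new boundary on $\partial\nu(K)$. This is the essential new content beyond the absolute case in \cite{OS-threemanifolds}.
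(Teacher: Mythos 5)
First, a point of comparison: the paper does not prove this proposition at all --- it is imported directly from Hedden and Levine \cite{Hedden-Levine} --- so there is no internal argument to measure your sketch against. Judged on its own terms, your outline follows the natural route (in spirit, the same one used for the absolute $c_1$-evaluation formula in \cite{OS-threemanifolds}): represent $\mathfrak{s}_{w,z}(\b{x})$ by an explicit vector field adapted to a compatible Morse function, build a surface in $Y-\nu(K)$ from the relative periodic domain, and evaluate the relative Euler class of $v^{\perp}$ by a local count of zeros.

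As a proof, however, this is only a scaffold: all of the genuinely new content sits in the local computations you defer in steps (iii)--(iv), and two of your intermediate assertions are not quite right as stated. First, you cannot cap off all $\b\beta$-boundary components of $\mathcal{P}$ inside $Y-\nu(K)$: the curve $\beta_g=\mu$ is a meridian of $K$, its compressing disk meets $K$, so the surface built from $\mathcal{P}$ retains boundary of the form $q\lambda+n_{\beta_g}\mu$ on $\partial\nu(K)$ (compare Lemma \ref{periodicdom}); the boundary contribution must therefore be computed for that whole curve system, with respect to the canonical outward boundary condition that makes the relative $c_1$ well defined, and not simply as ``$+q$ because the boundary wraps $q$ times around the longitude.'' This is precisely where a normalization slip would silently change the $-q$ in the statement, or replace $q$ by something involving the complexity $c$. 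Second, since curves of $\partial\mathcal{P}$ pass between $w$ and $z$, the ``multiplicity at a basepoint'' is not a single integer: the term $\bar{n}_{w,z}(\mathcal{P})$ is a sum of \emph{averaged} local multiplicities over the regions adjacent to $w$ and to $z$ --- exactly as it is used later in the proof of Lemma \ref{Alexandergrading}, where $\bar n_{w,z}(\mathcal{P}_{F})=\tfrac{1}{2}(0+q)+\tfrac{1}{2}(-cr+q-cr)$ --- so your step (iii), phrased in terms of $n_w(\mathcal{P})$ and $n_z(\mathcal{P})$, does not yet parse for a general relative periodic domain. Until the winding-region, basepoint, and boundary counts are carried out in full, your argument establishes the expected shape of the formula but not the formula itself; the honest options are to complete those local computations or to cite \cite{Hedden-Levine}, as the paper does.
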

\noindent Here $\bar{n}_{w,z}(\mathcal{P})$ is the sum of the averages of the multiplicities of $\mathcal{P}$ on either side of the $w$ and $z$ base points.

\section{Knot Floer homology for rationally null-homologous knots}

To understand how $\tau_{\mathfrak{s}}(Y,K)$ transforms under certain operations, such as reversing the orientation of $K$ and $Y$, we must first understand how the knot Floer chain complex transforms under these operations. We begin by recalling the construction of $CFK^\infty(Y,K,\xi)$.

\subsection{Knot Floer chain complexes} Following \cite[Section 3]{OS-rationalsurgeries}, we fix a doubly pointed Heegaard diagram $(\Sigma,\b\alpha,\b\beta,w,z)$ for $(Y,K)$ and a relative $\s$-structure $\xi$ on $(Y,K)$. Then the complex $CFK^\infty(Y,K,\xi)$ is generated by triples $[\mathbf{x},i,j]$ where $\mathbf{x}$ is in $\intpt$ and $i,j$ are integers such that 
\begin{equation*}
\mathfrak{s}_{w,z}(\mathbf{x})+(i-j)\PD[\mu]=\xi
\end{equation*} and is equipped with the usual differential,
\begin{equation*}
\partial^\infty[\mathbf{x},i,j]=\sum_{\mathbf{y}\in\intpt}\sum_{\big\{\substack{\phi\in\pi_2(\mathbf{x},\mathbf{y})\\ \mu(\phi)=1}\big\}} \widehat{\mathcal{M}}(\phi)[\mathbf{y},i-n_w(\phi),j-n_z(\phi)].
\end{equation*}

\begin{proposition}[{\cite[Proposition 3.2]{OS-rationalsurgeries}}]\label{SES}
For any $\xi\in\relS(Y,K)$ there is a natural identification of certain subcomplexes of $CFK^\infty(Y,K,\xi)=\mathcal{C}_\xi$ with the Floer chain complexes associated to $Y$. Specifically, 
\begin{equation*}
0\to \mathcal{C}_\xi\{i<0\}\to \mathcal{C}_\xi\to \mathcal{C}_\xi\{i\geq0\}\to 0
\end{equation*}
and 
\begin{equation*}
0\to \mathcal{C}_\xi\{i=0\}\to \mathcal{C}_\xi\{i\geq0\}\xrightarrow{U} \mathcal{C}_\xi\{i\geq0\}\to 0
\end{equation*}
can be naturally identified with
\begin{equation*}
0\to CF^-(Y,\mathfrak{s})\to CF^\infty(Y,\mathfrak{s})\to CF^+(Y,\mathfrak{s})\to 0
\end{equation*}
and 
\begin{equation*}
0\to \widehat{CF}(Y,\mathfrak{s})\to CF^+(Y,\mathfrak{s})\xrightarrow{U} {CF^+}(Y,\mathfrak{s})\to 0
\end{equation*}
where $G_{Y,K}(\xi)=\mathfrak{s}$. 
In addition,
\begin{equation*}
0\to \mathcal{C}_\xi\{j<0\}\to \mathcal{C}_\xi\to \mathcal{C}_\xi\{j\geq0\}\to 0
\end{equation*}
and 
\begin{equation*}
0\to \mathcal{C}_\xi\{j=0\}\to \mathcal{C}_\xi\{j\geq0\}\to \mathcal{C}_\xi\{j\geq0\}\to 0
\end{equation*}
can be naturally identified with
\begin{equation*}
0\to CF^-(Y,\mathfrak{s}')\to CF^\infty(Y,\mathfrak{s}')\to CF^+(Y,\mathfrak{s}')\to 0
\end{equation*}
and
\begin{equation*}
0\to \widehat{CF}(Y,\mathfrak{s}')\to CF^+(Y,\mathfrak{s}')\xrightarrow{U} {CF^+}(Y,\mathfrak{s}')\to 0
\end{equation*}
where $G_{Y,K^r}(\xi)=\mathfrak{s}'$. 
\end{proposition}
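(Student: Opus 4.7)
The plan is to exhibit the identifications on generators, verify they are chain maps, and then track the $\s$-structure correspondence through the filling map $G_{Y,K}$. For the first family (involving the $i$-coordinate), I would define the forgetful map
\[
\mathcal{C}_\xi \to CF^\infty(Y,\mathfrak{s}), \qquad [\b{x},i,j]\mapsto [\b{x},i],
\]
and similarly $\mathcal{C}_\xi\{i<0\}\to CF^-(Y,\mathfrak{s})$, $\mathcal{C}_\xi\{i\geq 0\}\to CF^+(Y,\mathfrak{s})$, and $\mathcal{C}_\xi\{i=0\}\to \widehat{CF}(Y,\mathfrak{s})$. The two short exact sequences on the $\mathcal{C}_\xi$-side are tautological from the $i$-filtration and from the fact that $U$ acts by $[\b{x},i,j]\mapsto[\b{x},i-1,j-1]$; under these forgetful maps they map term-by-term to the standard short exact sequences for $CF^\bullet(Y,\mathfrak{s})$.

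The chain-map property is immediate from the $CFK^\infty$ differential
\[
\partial^\infty[\b{x},i,j]=\sum_{\b{y}}\sum_{\phi,\,\mu(\phi)=1}\widehat{\mathcal{M}}(\phi)\,[\b{y},i-n_w(\phi),j-n_z(\phi)];
\]
forgetting the $j$-coordinate produces exactly the $CF^\infty(Y)$ differential built from the basepoint $w$. For the $\s$-structure bookkeeping, the generators of $\mathcal{C}_\xi$ satisfy the defining constraint $\mathfrak{s}_{w,z}(\b{x})+(i-j)\PD[\mu]=\xi$. Applying $G_{Y,K}$, using its equivariance with respect to $i^*:H^2(Y,K)\to H^2(Y)$ together with the fact that $\mu$ bounds a meridional disk in $Y$, so that $i^*(\PD[\mu])=0$, I obtain
\[
\mathfrak{s}_w(\b{x})=G_{Y,K}(\mathfrak{s}_{w,z}(\b{x}))=G_{Y,K}(\xi)=\mathfrak{s},
\]
placing the image in the correct $\s$-summand.

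For the second family (involving the $j$-coordinate) I would run the same template with the map $[\b{x},i,j]\mapsto[\b{x},j]$ and the $CF^\infty(Y)$ differential indexed by $n_z(\phi)$ instead of $n_w(\phi)$. The essential new ingredient is the identity $\mathfrak{s}_z(\b{x})=G_{Y,-K}(\mathfrak{s}_{w,z}(\b{x}))$, which reflects the fact that interchanging $w$ and $z$ reverses the oriented flowline decomposition $\gamma_z-\gamma_w$ defining $K$; this is consistent with the relation $G_{Y,K^r}(\xi)=G_{Y,K}(\xi)-\PD[K]$ recorded earlier. Repeating the affine computation with $G_{Y,-K}$ in place of $G_{Y,K}$ then gives $\mathfrak{s}_z(\b{x})=G_{Y,-K}(\xi)=\mathfrak{s}'$, and the two short exact sequences pair up with the standard ones as required.

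The principal technical point is the $j$-side $\s$-structure identification — i.e.\ verifying rigorously that using $z$ as the main basepoint recovers the filling map for the orientation-reversed knot. I would handle this by returning to Turaev's vector-field description of $\mathfrak{s}_{w,z}$ and the extension procedure defining $G_{Y,K}$, and then invoking the formula for $G_{Y,K^r}$ recalled in Section 2. Once that compatibility is in place, everything else reduces to unwinding the definitions of $CF^-$, $CF^\infty$, $CF^+$, $\widehat{CF}$, and the $i$- and $j$-filtrations on $CFK^\infty$, and the $j$-family of identifications becomes the mirror image of the $i$-family.
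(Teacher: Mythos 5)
Your argument is correct and essentially coincides with the source: the paper does not reprove this statement but cites \cite[Proposition 3.2]{OS-rationalsurgeries}, whose proof is exactly the unwinding you describe --- the forgetful maps $[\b{x},i,j]\mapsto[\b{x},i]$ and $[\b{x},i,j]\mapsto[\b{x},j]$, with the constraint $\mathfrak{s}_{w,z}(\b{x})+(i-j)\PD[\mu]=\xi$ determining the omitted coordinate (so these are bijections on generators), and the identities $\mathfrak{s}_w(\b{x})=G_{Y,K}(\mathfrak{s}_{w,z}(\b{x}))$ and $\mathfrak{s}_z(\b{x})=G_{Y,-K}(\mathfrak{s}_{w,z}(\b{x}))$ placing each piece in the correct $\s$-summand. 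The only point to make explicit is that these last two identities are part of the cited construction of $\mathfrak{s}_{w,z}$ rather than consequences of equivariance alone, so your plan to verify the $z$-basepoint identity through the vector-field description is the right, and the needed, supplement.
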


\emph{A priori} the complex $CFK^\infty(Y,K,\xi)$ depends on the relative $\s$-structure $\xi$. However, Ozsv\'ath and Szab\'o show that if $\xi_1$ and $\xi_2$ in $\relS(Y,K)$ map to the same $\mathfrak{s}\in\S(Y)$ under $G_{Y,K}$, then $CFK^\infty(Y,K,\xi_1)$ and $CFK^\infty(Y,K,\xi_2)$ differ by a shift in their $j$-filtration. 

Since we have picked a distinguished element of $\xi_0^\mathfrak{s}\in G^{-1}_{Y,K}(\mathfrak{s})$ for each $\mathfrak{s}$, we define the complex: 
\begin{equation*}
CFK^\infty(Y,K,\mathfrak{s}):=CFK^\infty(Y,K,\xi_0^\mathfrak{s}).
\end{equation*}

\subsection{Symmetries of $CFK^\infty(Y,K,\mathfrak{s})$} We are interested in how the complexes $CFK^\infty(Y,K,\mathfrak{s})$ change under orientation reversal of $Y$ and $K$ as well as under conjugation of $\mathfrak{s}$. Following convention, we use $CFK^*_\infty(Y,K,\mathfrak{s})$ to denote the dual complex, $\Hom_{\F_2}(CFK_*^\infty(Y,K,\mathfrak{s}),\F_2)$. 
\begin{lemma}\label{dualcomplex}
Let $Y$ be a rational homology 3-sphere and $K\subset Y$ a knot. Let $-Y$ denote $Y$ with its reverse orientation. Then for any $\mathfrak{s}\in\S(Y)$, 
\begin{equation*}
CFK^\infty_*(-Y,K,\mathfrak{s})\cong CFK^*_\infty (Y,K,\mathfrak{s}).
\end{equation*}
\end{lemma}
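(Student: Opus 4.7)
The strategy is to mimic the standard argument that $CF^\infty_*(-Y,\mathfrak{s})\cong CF_\infty^*(Y,\mathfrak{s})$, while carrying along the extra $j$-filtration that records the knot. Fix a doubly pointed Heegaard diagram $\mathcal{H}=(\Sigma,\b\alpha,\b\beta,w,z)$ for $(Y,K)$; then $\bar{\mathcal{H}}=(-\Sigma,\b\alpha,\b\beta,w,z)$ is a doubly pointed Heegaard diagram for $(-Y,K)$, and the generator set $\intpt$ is tautologically identified between the two. The essential geometric input is that complex conjugation on the source disk converts a holomorphic representative $\phi\in\pi_2(\b{x},\b{y})$ for $\mathcal{H}$ into a holomorphic representative $\bar\phi\in\pi_2(\b{y},\b{x})$ for $\bar{\mathcal{H}}$ with $n_w(\bar\phi)=n_w(\phi)$ and $n_z(\bar\phi)=n_z(\phi)$.

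First I would define a map
\begin{equation*}
\Phi\colon CFK^\infty_*(-Y,K,\mathfrak{s})\longrightarrow CFK^*_\infty(Y,K,\mathfrak{s})
\end{equation*}
on generators by sending $[\b{x},i,j]$ computed from $\bar{\mathcal{H}}$ to the linear functional dual to $[\b{x},i,j]$ computed from $\mathcal{H}$. Substituting the disk correspondence above into the definition of $\partial^\infty$ shows that $\Phi\circ\partial^\infty_{-Y}=(\partial^\infty_Y)^*\circ\Phi$, so $\Phi$ is a chain isomorphism. Because each $[\b{x},i,j]$ is sent to $[\b{x},i,j]^*$, the $i$- and $j$-sub-level sets of the source are carried onto the annihilators of the corresponding sub-level sets on the target, which is exactly how a bifiltration behaves under dualization.

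Next I would verify compatibility with the splitting by $\s$-structures. Under the canonical identifications $\S(-Y)\cong\S(Y)$ and $\relS(-Y,K)\cong\relS(Y,K)$ (both being affine over $H^2(Y)$ and $H^2(Y,K)$ respectively), I would show that the map $\mathfrak{s}^{\bar{\mathcal{H}}}_{w,z}$ agrees with $\mathfrak{s}^{\mathcal{H}}_{w,z}$ and that the filling map $G_{-Y,K}$ corresponds to $G_{Y,K}$. This guarantees that $\Phi$ restricts to an isomorphism on each $\mathfrak{s}$-summand.

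The main technical hurdle is tracking the distinguished representative $\xi_0^{\mathfrak{s}}$. Orientation reversal of $Y$ negates $c_1$, and the normalization condition $0\leq\langle c_1(\xi_0^{\mathfrak{s}}),[F]\rangle<2[\mu]\cdot[F]$ used to pin down $\xi_0^{\mathfrak{s}}$ does not survive this sign change in a manifest way. I would show that the resulting shift in $\xi_0^{\mathfrak{s}}$ is precisely matched by the built-in reindexing produced by dualization, so that $\Phi$ preserves the normalized bifiltration. Once this bookkeeping is settled, the lemma follows.
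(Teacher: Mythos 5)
Your plan follows the same route as the paper: pass from $(\Sigma,\b\alpha,\b\beta,w,z)$ to $(-\Sigma,\b\alpha,\b\beta,w,z)$, use complex conjugation to match disks $\phi\in\pi_2(\b{x},\b{y})$ for $Y$ with disks in $\pi_2(\b{y},\b{x})$ for $-Y$ having the same $n_w$ and $n_z$, and then write down a duality map on generators. The gap is in the formula for that map: sending $[\b{x},i,j]$ (for $-Y$) to the functional dual to $[\b{x},i,j]$ (for $Y$) is \emph{not} a chain map, so the identity $\Phi\circ\partial^\infty_{-Y}=(\partial^\infty_Y)^*\circ\Phi$ you assert does not hold. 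Concretely, the disk correspondence gives
\begin{equation*}
\partial^\infty_{-Y}[\b{x},i,j]=\sum_{\b{y}}\sum_{\phi\in\pi_2(\b{y},\b{x})}\#\widehat{\mathcal{M}}(\phi)\,[\b{y},\,i-n_w(\phi),\,j-n_z(\phi)],
\end{equation*}
whereas the dual differential on $CFK^*_\infty(Y,K,\mathfrak{s})$ satisfies
\begin{equation*}
(\partial^\infty_Y)^*\big([\b{x},i,j]^*\big)=\sum_{\b{y}}\sum_{\phi\in\pi_2(\b{y},\b{x})}\#\widehat{\mathcal{M}}(\phi)\,[\b{y},\,i+n_w(\phi),\,j+n_z(\phi)]^*,
\end{equation*}
so the two sides of your identity disagree whenever a contributing disk has $n_w(\phi)\neq 0$ or $n_z(\phi)\neq 0$, which is the generic situation in the $\infty$-complex. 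The correct map must reverse the indices: in your direction it is $[\b{x},i,j]\mapsto[\b{x},-i,-j]^*$, which is exactly the paper's map $\mathcal{D}\colon[\b{x},i,j]\mapsto[\b{x},-i,-j]$; with this flip the displayed sums match and one gets a chain isomorphism. Saying that sub-level sets go to annihilators does not substitute for this: the reversal has to be built into the map itself, not only into how one reads the filtration on the dual.

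This index reversal is also precisely where your deferred ``bookkeeping'' about $\xi_0^{\mathfrak{s}}$ lives. Generators of $CFK^\infty(Y,K,\xi)$ satisfy $\mathfrak{s}_{w,z}(\b{x})+(i-j)\PD[\mu]=\xi$, and under $(i,j)\mapsto(-i,-j)$ the quantity $i-j$ changes sign; this is what makes the flipped map compatible with $c_1(\xi)\mapsto -c_1(\xi)$ under orientation reversal and hence with the normalization $0\leq\langle c_1(\xi_0^{\mathfrak{s}}),[F]\rangle<2[\mu]\cdot[F]$ picking out the same distinguished representative on both sides. With your unflipped map there is no such compatibility to arrange, so the issue cannot be resolved by bookkeeping alone. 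Once the map is corrected, the rest of your outline (invariance of $\mathfrak{s}_w$ and $\mathfrak{s}_{w,z}$ under reversing the orientation of $\Sigma$, and the identification of the $\mathfrak{s}$-summands) proceeds as in the paper.
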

	
\begin{proof}
Let $(\Sigma,\b\alpha,\b\beta,w,z)$ be a Heegaard diagram for $(Y,K)$. Then reversing the orientation of $\Sigma$ gives a Heegaard diagram $(-\Sigma,\b\alpha,\b\beta,w,z)$ for $(-Y,K)$. For $\mathbf{x}$ in $\intpt$ the map $\mathfrak{s}_w(\mathbf{x})$ is independent of the orientation of $\Sigma$, and thus the orientation of $Y$. Following \cite[Proposition 2.5]{OS-properties}, suppose $\mathbf{x}, \mathbf{y}$ are in $\intpt$ and $\phi$ is a disk in $\pi_2(\mathbf{x},\mathbf{y})$. There is a natural identification 
\begin{equation*}
\widehat{\mathcal{M}}_{J_s}(\phi)=\widehat{\mathcal{M}}_{-J_s}(\phi')
\end{equation*}
where $\phi'\in\pi_2(\mathbf{y},\mathbf{x})$ is the class satisfying $n_z(\phi')=n_z(\phi)$ obtained by precomposing with complex conjugation. 

Moreover, since $n_z(\phi)=n_z(\phi')$ we also have $n_w(\phi)=n_w(\phi')$, and, 
\begin{align*}
\mathfrak{s}_{w,z}(\mathbf{y})-\mathfrak{s}_{w,z}(\mathbf{x})&=(n_{z}(\phi')-n_w(\phi'))\PD[\mu]\\
	&=(n_z(\phi)-n_w(\phi))\PD[\mu].
\end{align*}	
Thus, the duality map 
\begin{equation*}
\mathcal{D}: CFK^\infty_*(Y,K,\mathfrak{s})\to CFK_\infty^*(-Y,K,\mathfrak{s})
\end{equation*}
taking elements $[\mathbf{x},i,j]\mapsto [\mathbf{x},-i,-j]$ is an isomorphism. 
\end{proof}
	
To understand the dependence on the orientation of the knot $K\subset Y$ recall that there is an involution on relative $\s$-structures 
\begin{equation*}
\tilde{J}:\relS(Y,K)\to\relS(Y,K)
\end{equation*}
and the related involution, 
\begin{equation*}
J:\S(Y)\to\S(Y)
\end{equation*}
on $\s$-structures. If $v$ is a vector field, representing $\mathfrak{s}\in\S(Y)$, then $-v$ represents $J\mathfrak{s}$. This involution behaves appropriately with respect to the filling maps in the following sense,
\begin{lemma}
\begin{equation*}
G_{Y,K^r}(\tilde{J}\xi)=JG_{Y,K}(\xi).
\end{equation*}
\end{lemma}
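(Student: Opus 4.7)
The plan is to verify the identity via vector field representatives. Let $v$ be a non-vanishing vector field on $Y - \nu(K)$ pointing outward on $\partial\nu(K)$ representing $\xi$, and let $w$ denote the extension of $v$ by the canonical filling $v_{\nu(K)}$ (inward on $\partial\nu(K)$, transverse to meridian disks, $K$ as closed orbit), so that $w$ represents $G_{Y,K}(\xi)$. By definition of $J$ on $\S(Y)$, the vector field $-w$ represents $JG_{Y,K}(\xi)$, and the goal is to show that $-w$ also represents $G_{Y,K^r}(\tilde{J}\xi)$.

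Analyzing $-w$ piece by piece: on $\nu(K)$, the restriction $-v_{\nu(K)}$ points outward on $\partial\nu(K)$, is transverse to meridian disks (with reversed transverse orientation), and has $-K = K^r$ as a closed orbit. A canonical rotation in a collar of $\partial\nu(K)$ carries $-v_{\nu(K)}$ to $v_{\nu(K^r)}$, the canonical filling associated to the reversed orientation; the compensating rotation applied on the complement side carries $-v$ to a vector field $v'$ on $Y-\nu(K)$ pointing outward on $\partial\nu(K)$, and the relative homology class of $v'$ is by definition $\tilde{J}\xi$. The glued field $v' \cup v_{\nu(K^r)}$ is then homotopic to $-w$ and simultaneously represents $G_{Y,K^r}(\tilde{J}\xi)$, yielding the identity.

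Should the geometric manipulation feel too delicate, the identity is equivalently verified via the affine $H^2$-structures on $\S(Y)$ and $\relS(Y,K)$: combining $G_{Y,K^r}(\eta) = G_{Y,K}(\eta) - \PD[K]$ stated earlier, together with $\tilde{J}\xi - \xi = -c_1(\xi) \in H^2(Y,K;\Z)$ and $J\mathfrak{s} - \mathfrak{s} = -c_1(\mathfrak{s}) \in H^2(Y;\Z)$, and applying the equivariance of $G_{Y,K}$ under the $H^2$-action, the claim reduces to the Chern-class identity
\begin{equation*}
c_1(G_{Y,K}(\xi)) = i^*(c_1(\xi)) + \PD[K],
\end{equation*}
which expresses how the absolute Chern class of a filling is obtained from the relative Chern class, with the $\PD[K]$ term arising because $K$ appears as a closed orbit of $v_{\nu(K)}$. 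The main obstacle throughout is the boundary-condition subtlety: $-v$ points inward on $\partial\nu(K)$, whereas representatives of relative $\s$-structures must point outward, and it is precisely the canonical rotation used to define $\tilde{J}$ that reconciles these.
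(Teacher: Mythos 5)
Your main line of argument is the same as the paper's: represent everything by vector fields and observe that negation realizes both conjugations at once. The paper, however, short-circuits the boundary-condition issue you spend most of your effort on: following the model it quotes from Section 3.7 of Ozsv\'ath--Szab\'o's multivariable paper, a relative $\s$-structure $\xi\in\relS(Y,K)$ is represented by a nowhere-zero vector field $v$ on all of $Y$ having $K$ as a closed orbit (i.e.\ one works with the filled-in field from the start), so $\tilde J\xi$ is represented by $-v$, which has $K^r$ as a closed orbit, and $G_{Y,K}$ is simply the map that forgets the closed-orbit condition; the lemma is then immediate, with no collar rotation needed. Your rotation step is the right repair in the outward-pointing model, but as written it leans on the assertion that the relative class of the rotated field \emph{is by definition} $\tilde J\xi$ -- you should either adopt negate-and-rotate as the definition of $\tilde J$ or check it against the definition you are actually using. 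Your algebraic fallback is a genuinely different route: the reduction to $c_1(G_{Y,K}(\xi))=i^*c_1(\xi)+\PD[K]$ is correct bookkeeping given the equivariance of $G_{Y,K}$ and the relation $G_{Y,K^r}=G_{Y,K}-\PD[K]$, but it consumes two inputs not established in this paper, namely that filling formula for $c_1$ and the exact identity $\tilde J\xi-\xi=-c_1(\xi)$ (knowing only $c_1(\tilde J\xi)=-c_1(\xi)$ pins the difference class down merely up to $2$-torsion in $H^2(Y,K)$), so as a self-contained proof it is weaker than the one-line geometric argument, though it would be a reasonable alternative with those facts quoted from the surgery-formula literature.
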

\begin{proof}
As in \cite[Section 3.7]{OS-multivariable} we represent a relative $\s$-structure $\xi$ in $\relS(Y,K)$ by a nowhere zero vector field $v$ that has $K$ as a closed orbit. Thus, $-v$ is a vector field representing $\tilde{J}\xi$ that has $K^r$ as a closed orbit. 
\end{proof}
To show how the complex $CFK^\infty(Y,K,\mathfrak{s})$ transforms under reversal of orientation of $K$, we first investigate how $CFK^\infty(Y,K,\xi)$ transforms under this orientation reversal where $\xi$ is a relative $\s$-structure. 
\begin{lemma} [{\cite[Lemma 3.12]{OS-multivariable}}]\label{reverse}
Let $(\Sigma,\b\alpha,\b\beta,w,z)$ be a Heegaard diagram for $(Y,K)$, then $(-\Sigma,\b\beta, \b\alpha,w,z)$ is a diagram for $(Y,K^r)$. Let 
\begin{equation*}
\mathfrak{s}_{w,z}:\intpt\to \S(Y,K)
\end{equation*}
be the map determined by $(\Sigma,\b\alpha,\b\beta,w,z)$, and 
\begin{equation*}
\mathfrak{s}'_{w,z}:\intpt \to\S(Y,K)
\end{equation*} 
be the map determined by $(-\Sigma,\b\beta, \b\alpha,w,z)$. Then, 
\begin{equation*}
\mathfrak{s}_{w,z}(\mathbf{x})=\tilde J\mathfrak{s}'_{w,z}(\mathbf{x}).
\end{equation*}
\end{lemma}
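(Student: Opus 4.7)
The plan is to unpack the geometric/Morse-theoretic construction of $\mathfrak{s}_{w,z}$ and observe that the transformation $(\Sigma,\b\alpha,\b\beta) \leadsto (-\Sigma,\b\beta,\b\alpha)$ corresponds precisely to negating the gradient-like vector field produced by the diagram, which in turn realizes the involution $\tilde J$ on relative $\s$-structures.

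First I would recall how $\mathfrak{s}_{w,z}(\b{x})$ is built, following the construction described in \cite{OS-rationalsurgeries}. The data $(\Sigma,\b\alpha,\b\beta,w,z)$ determines a self-indexing Morse function $f$ on $Y$ whose level set at $3/2$ is $\Sigma$, with $\b\alpha$-curves arising from the descending disks of index-$1$ critical points and $\b\beta$-curves from the ascending disks of index-$2$ critical points. A generator $\b{x}=(x_1,\dots,x_g)\in\intpt$ together with the two basepoints $w,z$ specifies a collection of flow lines of the gradient-like vector field $v$ connecting index-$1$ and index-$2$ critical points through the $x_i$, together with two flow lines from the index-$0$ (resp.\ index-$3$) critical point through $w$ and $z$. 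One modifies $v$ in neighborhoods of the $x_i$-flow lines so that the resulting vector field is nowhere zero on $Y-\nu(K)$, points outward along $\partial\nu(K)$, and the outgoing homotopy class defines $\mathfrak{s}_{w,z}(\b{x})\in\S(Y,K)$.

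Next I would observe that the diagram $(-\Sigma,\b\beta,\b\alpha,w,z)$ is exactly the diagram one obtains from $-f$: reversing the orientation of $\Sigma$ flips the co-orientation of the splitting surface, and exchanging $\b\alpha\leftrightarrow\b\beta$ interchanges the roles of index-$1$ and index-$2$ critical points, which is what happens when $f$ is replaced by $-f$. Consequently the gradient-like vector field associated with $(-\Sigma,\b\beta,\b\alpha,w,z)$ and the generator $\b{x}$ is, up to homotopy through nowhere-vanishing vector fields on $Y-\nu(K)$ pointing outward along $\partial\nu(K)$, nothing but $-v$. Since by definition $\tilde J$ sends the homology class $[v]$ to $[-v]$, this immediately gives
\begin{equation*}
\mathfrak{s}'_{w,z}(\b{x})=\tilde J\,\mathfrak{s}_{w,z}(\b{x}),
\end{equation*}
which is equivalent to the claim because $\tilde J$ is an involution.

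The main step requiring care is the modification of the vector field near the knot. Under $f\mapsto -f$, the two distinguished flow lines $\gamma_z$ (ascending from the index-$0$) and $\gamma_w$ (descending from the index-$3$) switch roles, so that the union $\gamma_z-\gamma_w$ representing $K$ in the original diagram becomes the same set of points traversed in the opposite direction, yielding $K^r$; this is consistent with the stated setup. I would then check that the local modification of $-v$ near $K^r$ used to define $\mathfrak{s}'_{w,z}$ is the pointwise negation of the modification used to define $\mathfrak{s}_{w,z}$ near $K$, which is the only subtle point: one must verify that negating the vector field commutes (up to the appropriate homotopy) with the local surgery done near the flow lines through the $x_i$, $w$, and $z$. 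Once that is in place, the equality of homotopy classes $[-v]=\tilde J[v]$ yields the lemma.
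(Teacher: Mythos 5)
Your argument is correct and is essentially the paper's own proof: the paper likewise observes that $-f$ is compatible with $(-\Sigma,\b\beta,\b\alpha,w,z)$, so the vector field representing $\mathfrak{s}'_{w,z}(\b{x})$ is $-v$, i.e.\ $\tilde J$ applied to the class of $v$. Your additional care about the local modification near the flow lines and the basepoints is a reasonable elaboration of the same idea, not a different route.
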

\begin{proof}
If $f$ is a Morse function compatible with $(\Sigma,\b\alpha,\b\beta,w,z)$ then $-f$ is compatible with $(-\Sigma,\b\beta,\b\alpha,w,z)$. Now, if $v$ is a vector field representing $\mathfrak{s}_{w,z}(\mathbf{x})$, then $-v$ will represent $\mathfrak{s}'_{w,z}(\mathbf{x})$. \end{proof}

Applying the lemma, we obtain the following, which is the analogue of \cite[Proposition 3.9]{OS-knotinvariants}:

\begin{proposition}
For each $\xi\in\relS(Y,K)$, we have 
\begin{equation*}
CFK^\infty(Y,K,\xi)\cong CFK^\infty(Y,K^r,\tilde{J}\xi). 
\end{equation*}
\end{proposition}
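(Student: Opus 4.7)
The plan is to exhibit the isomorphism as the identity map on generators, using the Heegaard diagram $(-\Sigma,\b\beta,\b\alpha,w,z)$ for $(Y,K^r)$ provided by the preceding lemma. Concretely, I would define
\begin{equation*}
\Phi: CFK^\infty(Y,K,\xi)\to CFK^\infty(Y,K^r,\tilde{J}\xi), \qquad [\b{x},i,j]\mapsto [\b{x},i,j],
\end{equation*}
and check that (a) this sends generators to generators, and (b) it intertwines the two differentials. The $U$-equivariance and compatibility with the $i$- and $j$-filtrations are immediate from the formula.

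For step (a), writing $\mu=\mu_K$ and $\mu'=\mu_{K^r}$, the generator conditions in the two complexes read $\mathfrak{s}_{w,z}(\b{x})+(i-j)\PD[\mu]=\xi$ and $\mathfrak{s}'_{w,z}(\b{x})+(i-j)\PD[\mu']=\tilde{J}\xi$. The previous lemma gives $\mathfrak{s}'_{w,z}(\b{x})=\tilde{J}\mathfrak{s}_{w,z}(\b{x})$, while reversing $K$ reverses its meridian, so $\PD[\mu']=-\PD[\mu]$. Together with the affine compatibility $\tilde{J}(\eta+\beta)=\tilde{J}\eta-\beta$ for $\beta\in H^2(Y,K)$ (which one verifies from $c_1(\tilde{J}\eta)=-c_1(\eta)$, exactly as in the absolute case), applying $\tilde{J}$ to the first equation yields the second. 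Hence the two sets of generators coincide.

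For step (b), I would construct a bijection of Whitney disks $\phi\leftrightarrow \phi''$ by precomposing with the anti-holomorphic involution $z\mapsto -\bar z$ of the unit disk $D$. This map fixes the corners $\pm i$, so endpoints $\b{x},\b{y}$ are preserved, and it swaps the two boundary arcs, which exactly corrects for the interchange of $\b\alpha$ and $\b\beta$ in the new diagram. Anti-holomorphicity of $z\mapsto -\bar z$ combined with the orientation-reversal of $\Sigma$ means $\phi$ is $J_{\Sigma}$-holomorphic if and only if $\phi''$ is $J_{-\Sigma}$-holomorphic; since the image set in $\Sym^g(\Sigma)=\Sym^g(-\Sigma)$ is unchanged, the Maslov index and the multiplicities $n_w,n_z$ agree, and $\widehat{\mathcal{M}}(\phi)=\widehat{\mathcal{M}}(\phi'')$. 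Substituting this bijection into the defining formula for $\partial^\infty$ shows $\Phi\circ\partial^\infty=\partial^\infty\circ\Phi$.

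The main obstacle is the sign bookkeeping: one must keep straight the interaction of $\tilde{J}$ with the affine action of $H^2(Y,K)$, the sign flip $\mu_{K^r}=-\mu_K$, and the fact that swapping the roles of $\b\alpha$ and $\b\beta$ in the definition of $\pi_2(\b{x},\b{y})$ would naively exchange source and target, but this is precisely compensated by the orientation reversal on $\Sigma$. Once these conventions are nailed down, the remainder of the argument is a direct verification, and the formal structure of the proof parallels \cite[Proposition~3.9]{OS-knotinvariants} in the null-homologous case.
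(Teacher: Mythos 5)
Your argument is correct and matches the paper's proof in essentially every respect: the generator identification via $\mathfrak{s}'_{w,z}(\b{x})=\tilde{J}\mathfrak{s}_{w,z}(\b{x})$, the sign flip $\PD[\mu_{K^r}]=-\PD[\mu_K]$, and the anti-equivariance of $\tilde{J}$ are exactly how the paper converts one generator condition into the other. Your reflected disk $\phi''$ is precisely the paper's ``$-\phi$'', so your treatment of the differential just spells out the same construction in more detail.
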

\begin{proof}
Suppose $[\mathbf{x},i,j]$ is a generator of $CFK^\infty(Y,K,\xi)$, then 
\begin{equation*}
\mathfrak{s}_{w,z}(\mathbf{x})+(i-j)\PD[\mu_K]=\xi
\end{equation*}
where $\mu_K$ is the meridian of $K$. On the other hand by Lemma \ref{reverse} we know $\mathfrak{s}_{w,z}(\mathbf{x})=\tilde{J}\mathfrak{s}'_{w,z}(\mathbf{x})$,  therefore 
\begin{equation*}
\tilde{J}\mathfrak{s}'_{w,z}(\mathbf{x})+(i-j)\PD[\mu_{K}]=\xi
\end{equation*}
which is equivalent to 
\begin{equation*}
\mathfrak{s}'_{w,z}(\mathbf{x})+(i-j)\PD[\mu_{K^r}]=\tilde{J}\xi
\end{equation*}
where $\mu_{K^r}$ is the meridian of $K^r$. Moreover, if $\phi\in\pi_2(\mathbf{x},\mathbf{y})$ is a holomorphic disk contributing to the differential in $CFK^\infty(Y,K,\xi)$ it is still holomorphic in the diagram $(-\Sigma, \b\beta, \b\alpha ,w,z)$ and contributes to the differential in $CFK^\infty(Y,K^r,\tilde{J}\xi)$.
\end{proof}

Thus we may conclude that in fact,
\begin{proposition} \label{conjugation}
\begin{equation*}
CFK^\infty(Y,K,\mathfrak{s})\cong CFK^\infty(Y,K^r,J\mathfrak{s}).
\end{equation*}
\end{proposition}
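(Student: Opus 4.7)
The plan is to chain together the preceding proposition with the equivariance lemma for the filling map under $\tilde J$, together with the definition $CFK^\infty(Y,K,\mathfrak{s}) := CFK^\infty(Y,K,\xi_0^\mathfrak{s})$. First I would specialize the preceding proposition to the distinguished lift $\xi_0^\mathfrak{s} \in G_{Y,K}^{-1}(\mathfrak{s})$, which immediately gives
\begin{equation*}
CFK^\infty(Y,K,\xi_0^\mathfrak{s}) \cong CFK^\infty(Y,K^r, \tilde{J}\xi_0^\mathfrak{s}).
\end{equation*}
Next, by the equivariance lemma, $G_{Y,K^r}(\tilde{J}\xi_0^\mathfrak{s}) = J G_{Y,K}(\xi_0^\mathfrak{s}) = J\mathfrak{s}$, so that $\tilde{J}\xi_0^\mathfrak{s}$ is some (not necessarily distinguished) lift of $J\mathfrak{s}$ under $G_{Y,K^r}$.

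To descend from the level of relative $\s$-structures to $\s$-structures on $Y$, I would invoke the remark following Proposition \ref{SES}, which states that any two elements of $\relS(Y,K)$ with the same image under $G_{Y,K}$ yield $CFK^\infty$-complexes that differ only by a shift in their $j$-filtration. Applying this to $\tilde{J}\xi_0^\mathfrak{s}$ and $\xi_0^{J\mathfrak{s}}$ inside $G_{Y,K^r}^{-1}(J\mathfrak{s})$, we obtain
\begin{equation*}
CFK^\infty(Y,K^r, \tilde{J}\xi_0^\mathfrak{s}) \cong CFK^\infty(Y,K^r, \xi_0^{J\mathfrak{s}}) = CFK^\infty(Y,K^r, J\mathfrak{s}),
\end{equation*}
and composing with the first isomorphism completes the proof.

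The only subtlety I anticipate is pure bookkeeping: because $\tilde{J}\xi_0^\mathfrak{s}$ need not be the distinguished lift $\xi_0^{J\mathfrak{s}}$, there is an implicit $j$-filtration shift between the two complexes on the right. However, this is exactly the ambiguity that the definition $CFK^\infty(Y,K,\mathfrak{s}) := CFK^\infty(Y,K,\xi_0^\mathfrak{s})$ absorbs, and since the statement of the proposition is an isomorphism of $CFK^\infty$ complexes (already only defined up to the choice of lift), no genuine obstruction arises and the argument is essentially a direct concatenation of the preceding results.
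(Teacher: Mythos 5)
Your first two steps match the paper's proof exactly: specialize the previous proposition to the distinguished lift to get $CFK^\infty(Y,K,\xi_0^\mathfrak{s})\cong CFK^\infty(Y,K^r,\tilde{J}\xi_0^\mathfrak{s})$, and use the equivariance lemma to see $G_{Y,K^r}(\tilde{J}\xi_0^\mathfrak{s})=J\mathfrak{s}$. But the last step contains a genuine gap: you dismiss the discrepancy between $\tilde{J}\xi_0^\mathfrak{s}$ and $\xi_0^{J\mathfrak{s}}$ as a $j$-filtration shift that ``the definition absorbs.'' It does not. The entire point of singling out the distinguished lift $\xi_0^\mathfrak{s}$ is to remove the shift ambiguity and give the Alexander filtration an absolute, $\Q$-valued normalization $k_\mathfrak{s}+m$; the proposition is then used (Proposition \ref{properties}(2)) to conclude $\tau_{\mathfrak{s}}(Y,K^r)=\tau_{J\mathfrak{s}}(Y,K)$, and an unaccounted shift of the $j$-filtration by $m_0\neq 0$ would change the right-hand side by exactly $m_0$. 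So an isomorphism ``up to shift'' is strictly weaker than what the proposition asserts and is useless for its intended application.

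The missing step --- and the actual content of the paper's proof --- is to show that no shift occurs, i.e.\ that $\tilde{J}\xi_0^{\mathfrak{s}}=\xi_0^{J\mathfrak{s}}$ on the nose. This is a short but nontrivial verification using the characterization $0\leq \langle c_1(\xi_0^{\mathfrak{s}}),[F]\rangle < 2[\mu]\cdot[F]$ of the distinguished lift: since $c_1(\tilde{J}\xi_0^\mathfrak{s})=-c_1(\xi_0^\mathfrak{s})$ and the rational Seifert surface inherited by $K^r$ is $-F$ (the orientation of $F$ comes from the knot, as the paper emphasizes), the two sign changes cancel, giving $\langle c_1(\tilde{J}\xi_0^\mathfrak{s}),[-F]\rangle=\langle c_1(\xi_0^\mathfrak{s}),[F]\rangle\in[0,2[\mu_{K^r}]\cdot[-F])$. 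Hence $\tilde{J}\xi_0^\mathfrak{s}$ satisfies the defining condition of the distinguished lift for $(K^r,J\mathfrak{s})$ and equals $\xi_0^{J\mathfrak{s}}$, with no appeal to the shift-invariance remark needed. You should replace your final paragraph with this argument.
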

\begin{proof} 
Using the previous proposition, we know that 
\begin{equation*}
CFK^\infty(Y,K,\mathfrak{s})=CFK^\infty(Y,K,\xi_0^\mathfrak{s})\cong CFK^\infty(Y,K^r,\tilde{J}\xi_0^\mathfrak{s}).
\end{equation*} Thus, we only need to show that $\xi_0^{J\mathfrak{s}}=\tilde{J}\xi_0^{\mathfrak{s}}$. 

Note that $-2[\mu]\cdot[F]<\langle c_1(\xi_0^\mathfrak{s}),[F]\rangle\leq 0$, where $F$ is a rational Seifert surface for $K$. Also, note that $c_1(\tilde{J}\xi_0^\mathfrak{s})=-c_1(\xi_0^\mathfrak{s})$. Thus, we have 
\begin{equation*}
-2[\mu_{K^r}]\cdot [-F]=-2[\mu_{K}]\cdot [F]<\langle c_1(\tilde{J}\xi_0^\mathfrak{s}),-[F]\rangle=\langle c_1(\xi_0^\mathfrak{s}),[F]\rangle \leq 0
\end{equation*}
where $-F$ is a rational Seifert surface associated to $K^r$.
\end{proof}

\subsection{Connected sums}
We are also interested in how $\tau_\mathfrak{s}(Y,K)$ behaves under connected sum. For a pair $(Y_1,K_1)$ and $(Y_2,K_2)$ of knots in 3-manifolds, we form their connected sum, $(Y_1,K_1)\#(Y_2,K_2):=(Y_1\#Y_2,K_1\#K_2)$. Given corresponding Heegaard diagrams $(\Sigma_1,\b{\alpha}_1,\b\beta_1,w_1,z_1)$ and $(\Sigma_2,\b{\alpha}_2,\b\beta_2,w_2,z_2)$ for $(Y_1,K_1)$ and $(Y_2,K_2)$ a Heegaard diagram for the connected sum is given by the tuple $(\Sigma_1\#\Sigma_2,\b{\alpha}_1\cup\b\alpha_2,\b\beta_1\cup\b\beta_2,w_1,z_2)$ where the connected sum of $\Sigma_1$ and $\Sigma_2$ is performed by identifying neighborhoods of $w_2$ and $z_1$. 
 
We can also ``glue'' $\s$-structures over the connected sum to obtain a map:
\begin{equation*}
\S(Y_1,K_1)\times\S(Y_2,K_2)\to \S(Y_1\#Y_2,K_1\#K_2)
\end{equation*} 
which sends $(\xi_1,\xi_2)\mapsto \xi_1\#\xi_2$ and is equivariant with respect to the action of 
\begin{equation*}
H^2(Y_1,K_1)\oplus H^2(Y_2,K_2)\to H^2(Y_1\#Y_2,K_1\#K_2).
\end{equation*}
In addition, given an intersection point $\mathbf{x}_1\otimes\mathbf{x}_2\in\mathbb{T}_{\b\alpha_1\cup\b\alpha_2}\cap\mathbb{T}_{\b\beta_1\cup\b\beta_2}$, 
\begin{equation*}
\mathfrak{s}_{w_1,z_2}(\mathbf{x}_1\otimes\mathbf{x}_2)=\mathfrak{s}_{w_1,z_1}(\mathbf{x}_1)\#\mathfrak{s}_{w_2,z_2}(\mathbf{x}_2).
\end{equation*}
The following theorem comes from \cite[Theorem 5.1]{OS-rationalsurgeries}, however the statement there seems to have a typographical error. The correct statement is contained in \cite[Theorem 7.1]{OS-knotinvariants}. We also include it here for clarity.
\begin{theorem}
Fix $\xi_3\in\relS(Y_1\# Y_2,K_1\# K_2)$. There is a filtered chain homotopy equivalence 
\begin{multline*}
\bigoplus_{\xi_1\#\xi_2=\xi_3} CFK^\infty(Y_1,K_1,\xi_1)\otimes_{\F_2[U,U^{-1}]}CFK^\infty(Y_2,K_2, \xi_2)\\ 
\overset{\cong}{\longrightarrow} CFK^\infty(Y_1\#Y_2,K_1\#K_2, \xi_3).
\end{multline*}
\end{theorem}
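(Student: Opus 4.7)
The plan is to follow the standard stabilization/neck-stretching template for connected-sum formulas in Heegaard Floer theory, adapting it to keep track of the refined data: the two basepoints per factor, the relative $\s$-structures, and the bifiltration by $(i,j)$.

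First I would set up the chain-level map. Given Heegaard diagrams $(\Sigma_i,\b\alpha_i,\b\beta_i,w_i,z_i)$ for $(Y_i,K_i)$, form the connected-sum diagram $(\Sigma_1\#\Sigma_2,\b\alpha_1\cup\b\alpha_2,\b\beta_1\cup\b\beta_2,w_1,z_2)$ by gluing along small disks around $z_1$ and $w_2$, as described just before the statement. Generators of the intersection $\mathbb{T}_{\b\alpha_1\cup\b\alpha_2}\cap\mathbb{T}_{\b\beta_1\cup\b\beta_2}$ are exactly pairs $\b{x}_1\otimes\b{x}_2$, so on generators I define
\begin{equation*}
\Phi\bigl([\b x_1,i_1,j_1]\otimes[\b x_2,i_2,j_2]\bigr)=[\b x_1\otimes\b x_2,\,i_1+i_2,\,j_1+j_2].
\end{equation*}
I would then verify this respects the decomposition: using the additivity $\mathfrak{s}_{w_1,z_2}(\b x_1\otimes\b x_2)=\mathfrak{s}_{w_1,z_1}(\b x_1)\#\mathfrak{s}_{w_2,z_2}(\b x_2)$ and the equivariance of $\#$ under $\PD[\mu]$, the generator $[\b x_1\otimes\b x_2,i_1+i_2,j_1+j_2]$ lies in the $\xi_3$-complex precisely when $\xi_1\#\xi_2=\xi_3$. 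This is what forces the direct-sum indexing on the left-hand side, and the tensor product is taken over $\F_2[U,U^{-1}]$ because the $U$-action shifts both $i_1$ and $i_2$ compatibly (the $w$-basepoints $w_1$ and $w_2$ are identified in the limit via the neck-stretching).

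The heart of the proof is showing $\Phi$ is a chain map that is a filtered homotopy equivalence, and this is the main obstacle. The key analytic input is a neck-stretching argument: stretch the connected-sum neck between the two diagrams so that any holomorphic disk in $\mathrm{Sym}^{g_1+g_2}(\Sigma_1\#\Sigma_2)$ with Maslov index $1$ degenerates, in the limit of infinite neck length, into a pair of holomorphic disks, one in $\mathrm{Sym}^{g_1}(\Sigma_1)$ and one in $\mathrm{Sym}^{g_2}(\Sigma_2)$, with matching asymptotics at the connected-sum points. Index considerations force one of the two limiting disks to be constant, so the differential on the tensor product is recovered. Conversely, a gluing argument produces a disk in the connected sum from any such pair. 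Crucially the neck region contains neither $w_1$ nor $z_2$ (those basepoints sit on the two outer ends), and the multiplicities of disks at $z_1$ and $w_2$ on the two sides are forced to agree at the neck, so the $(i,j)$-bifiltration behaves additively under the correspondence. The upshot is that $\partial^\infty\Phi=\Phi(\partial^\infty\otimes 1+1\otimes\partial^\infty)$ and $\Phi$ is a filtered isomorphism at the $E_0$-page, hence a filtered chain homotopy equivalence.

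Finally I would verify invariance: one checks the map $\Phi$ is independent of the choice of diagrams up to filtered chain homotopy equivalence by the usual moves (isotopy, handleslide, stabilization), appealing to the naturality of the corresponding maps on each factor. The hard step is genuinely the neck-stretching/gluing analysis; everything else (generator count, $\s$-structure bookkeeping, bifiltration check) is combinatorial once that compactness-and-gluing result is in hand. Since the paper cites this as \cite[Theorem 5.1]{OS-rationalsurgeries}, in practice I would import the analytic content from there and only verify that the refinements introduced here (the decomposition by $\xi_3$ and the bifiltration) are preserved.
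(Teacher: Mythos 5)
Your overall framing (connected-sum diagram, additivity of $\mathfrak{s}_{w_1,z_2}$, tensoring over $\F_2[U,U^{-1}]$) is fine, but the central analytic step fails, and it is precisely the step you cannot import for free. Your chain-level map is the naive identification $[\b x_1,i_1,j_1]\otimes[\b x_2,i_2,j_2]\mapsto[\b x_1\otimes\b x_2,i_1+i_2,j_1+j_2]$, and you justify it by claiming that after stretching the neck every Maslov index $1$ disk in the connected-sum diagram splits into a pair with one factor constant. That is only true for classes with \emph{zero} multiplicity in the neck region. Here the connected sum is performed at $z_1$ and $w_2$, so the neck contains no basepoint of the new diagram $(\Sigma_1\#\Sigma_2,\b\alpha_1\cup\b\alpha_2,\b\beta_1\cup\b\beta_2,w_1,z_2)$, and the $CFK^\infty$ differential counts disks with arbitrary neck multiplicity $k>0$. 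In the stretched limit such a disk converges to a \emph{matched pair} of nonconstant curves with $n_{z_1}(\phi_1)=n_{w_2}(\phi_2)=k$; since $\mu(\phi)=\mu(\phi_1)+\mu(\phi_2)-2k$, an index-$1$ class allows $\mu(\phi_1)+\mu(\phi_2)=2k+1$ with both factors nonconstant, and the matched moduli space has expected dimension $0$ after dividing by translation, so these configurations genuinely contribute. Correspondingly, on the algebraic side a term of $\partial_1\otimes 1$ with $n_{z_1}(\phi_1)>0$ drops $j$ in the tensor product but has no counterpart of the form (disk)$\,\#\,$(constant) in the glued diagram. So your map is not a chain map, and "filtered isomorphism at the $E_0$-page" is not available; the theorem asserts a filtered chain homotopy equivalence, not an isomorphism of the naive complexes. (Contrast the $\widehat{CF}$ Künneth formula, where the basepoint sits in the neck and kills positive neck multiplicities — that is the situation your argument actually proves.)

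This is also where your proposal diverges from the paper: the paper's proof is essentially a citation of \cite[Theorem 5.1]{OS-rationalsurgeries} (with a typographical correction), and the equivalence there is \emph{not} the nearest-generator map but the map counting holomorphic triangles $\psi\in\pi_2(\b x_1\otimes\theta_2,\theta_1\otimes\b x_2,\b y)$, sending $[\b x_1,i_1,j_1]\otimes[\b x_2,i_2,j_2]$ to $\sum\#\hat{\mathcal M}(\psi)[\b y,i_1+i_2-n_{w_1}(\psi),j_1+j_2-n_{z_2}(\psi)]$; one then shows this triangle map is a filtered homotopy equivalence by the standard area/filtration ("small triangle plus lower-order terms") argument. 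If you want a proof in the spirit of your sketch you must either adopt that triangle-map construction, or carry out a genuinely harder degeneration analysis that accounts for the matched pairs (producing higher-order terms of the equivalence), and only then check the $\xi_1\#\xi_2=\xi_3$ decomposition and the $(i,j)$-bookkeeping as you do. One small additional slip: the connected sum identifies neighborhoods of $z_1$ and $w_2$, not of $w_1$ and $w_2$, so your parenthetical justification for tensoring over $\F_2[U,U^{-1}]$ should instead just invoke the identification of the two $U$-actions under the equivalence.
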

\begin{proof}The map sending $[\mathbf{x}_1,i_1,j_1]\otimes[\mathbf{x}_2,i_2,j_2]$ to
\begin{small}
\begin{equation*}
\sum_{\mathbf{y}\in\mathbb{T}_{\b\alpha_1\cup\b\alpha_2}\cap\mathbb{T}_{\b\beta_1\cup\b\beta_2}}\sum_{\{\psi\in\pi_2(\mathbf{x}_1\otimes\theta_2,\theta_1\otimes\mathbf{x}_2,\mathbf{y})\}} \#\hat{\mathcal{M}}(\psi)\cdot[\mathbf{y},i_1+i_2-n_{w_1}(\psi),j_1+j_2-n_{z_2}(\psi)]
\end{equation*}
\end{small}defines the isomorphism and the proof is the same as the proof given in \cite[Theorem 7.1]{OS-knotinvariants}.
\end{proof}
For knots in rational homology spheres, we will improve this to a statement about filtered complexes. To do so, it remains to show how the connected sum operation effects the filtration defined by the Alexander grading. 

If $K_1$ has order $q_1$ and $K_2$ has order $q_2$, their connected sum has order $\lcm(q_1,q_2)$ in $Y_1\#Y_2$. Following Calegari and Gordon \cite{Calegari-Gordon}, given connected rational Seifert surfaces $F_1$ and $F_2$, we can construct a rational Seifert surface $F$ for $K_1\#K_2$ by taking $q_1$ copies of $F_2$ and $q_2$ copies of $F_1$ and taking their boundary connected sum along $q_1q_2$ arcs. The resulting surface $F$ has Euler characteristic
\begin{equation*}
\chi(F)=q_2\chi(F_1)+q_1\chi(F_2)-q_1q_2.
\end{equation*}

\begin{lemma}\label{additive}
Let $K_1\subset Y_1$ and $K_2\subset Y_2$ be knots in rational homology spheres. Then  if $\mathbf{x}_1\otimes\mathbf{x}_2$ is an intersection point in the Heegaard diagram for $(Y_1\#Y_2,K_1\#K_2)$,
\begin{equation*}
A(\mathbf{x}_1\otimes\mathbf{x}_2)=A(\mathbf{x}_1)+A(\mathbf{x}_2).
\end{equation*}
\end{lemma}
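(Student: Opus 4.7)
The plan is to use the relative periodic domain formulation of the Alexander grading (Proposition \ref{Hedden-Levine}) to reduce the question to an additivity statement about Euler measures and local multiplicities of domains, which can be verified directly in the connect-sum Heegaard diagram.

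First, I would set up the data. Fix doubly pointed Heegaard diagrams $(\Sigma_i,\b\alpha_i,\b\beta_i,w_i,z_i)$ for $(Y_i,K_i)$ in which the last $\b\beta$-curve is the meridian $\mu_i$ of $K_i$, together with rational $q_i$-Seifert surfaces $F_i$ and corresponding relative periodic domains $\mathcal{P}_i$ as in Lemma \ref{periodicdom}. Following Calegari--Gordon, I would build a rational Seifert surface $F$ for $K_1\#K_2$ by taking $q_2$ parallel copies of $F_1$ and $q_1$ parallel copies of $F_2$ and joining them along $q_1q_2$ boundary-connect-sum bands located in a neighborhood of the connect-sum sphere. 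Counting intersections of $\partial F$ with a meridian of $K_1\#K_2$ then gives $[\mu]\cdot[F]=q_1q_2$, and the Euler characteristic is $\chi(F)=q_2\chi(F_1)+q_1\chi(F_2)-q_1q_2$ as recalled in the excerpt.

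Next, I would construct a relative periodic domain $\mathcal{P}$ in the connect-sum Heegaard diagram $(\Sigma_1\#\Sigma_2,\b\alpha_1\cup\b\alpha_2,\b\beta_1\cup\b\beta_2,w_1,z_2)$ representing $F$. The natural candidate is $\mathcal{P}=q_2\mathcal{P}_1+q_1\mathcal{P}_2$, with local modifications in the neck region where a neighborhood of $w_2=z_1$ has been removed and glued to the analogous region in $\Sigma_1$. The boundary of $\mathcal{P}$ then contains $q_1q_2$ copies of the canonical longitude of $K_1\#K_2$, matching $\partial F$. The key observation is that, for a generator $\b{x}_1\otimes\b{x}_2$ of the connect-sum complex, the multiplicities split cleanly:
\begin{align*}
n_{\b{x}_1\otimes\b{x}_2}(\mathcal{P})&=q_2\,n_{\b{x}_1}(\mathcal{P}_1)+q_1\,n_{\b{x}_2}(\mathcal{P}_2),\\
\bar n_{w_1,z_2}(\mathcal{P})&=q_2\,\bar n_{w_1,z_1}(\mathcal{P}_1)+q_1\,\bar n_{w_2,z_2}(\mathcal{P}_2),\\
\hat\chi(\mathcal{P})&=q_2\,\hat\chi(\mathcal{P}_1)+q_1\,\hat\chi(\mathcal{P}_2)+E,
\end{align*}
where $E$ is an explicit correction accounting for the Euler measure contributed by the neck and band regions.

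Then I would apply Proposition \ref{Hedden-Levine} to $\mathcal{P}_1$, $\mathcal{P}_2$, and $\mathcal{P}$, using the connect-sum splitting $\mathfrak{s}_{w_1,z_2}(\b{x}_1\otimes\b{x}_2)=\mathfrak{s}_{w_1,z_1}(\b{x}_1)\#\mathfrak{s}_{w_2,z_2}(\b{x}_2)$ recorded just before the lemma. Substituting the additivity relations above should yield
\begin{equation*}
\langle c_1(\mathfrak{s}_{w,z}(\b{x}_1\otimes\b{x}_2)),[F]\rangle-q_1q_2=q_2\bigl(\langle c_1(\mathfrak{s}_{w_1,z_1}(\b{x}_1)),[F_1]\rangle-q_1\bigr)+q_1\bigl(\langle c_1(\mathfrak{s}_{w_2,z_2}(\b{x}_2)),[F_2]\rangle-q_2\bigr),
\end{equation*}
and dividing by $2q_1q_2=2[\mu]\cdot[F]$ gives $A(\b{x}_1\otimes\b{x}_2)=A(\b{x}_1)+A(\b{x}_2)$, as desired.

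The main obstacle will be the bookkeeping near the connect-sum neck: I need the correction term $E$ in the Euler measure to cancel exactly against the difference between the naive connect-sum prediction and the true value $-q_1q_2$ appearing on the left-hand side above. In effect, the new corners and regions produced by cutting open neighborhoods of $w_2$ and $z_1$ and identifying them must contribute Euler measure precisely $-q_1q_2$ (accounting for the $q_1q_2$ bands, each of which is a rectangle attached across the neck), matching the Euler-characteristic deficit in $\chi(F)=q_2\chi(F_1)+q_1\chi(F_2)-q_1q_2$. Once this local Euler-measure computation is carried out, everything else is a matter of linearity, and the lemma follows.
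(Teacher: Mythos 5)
Your proposal follows essentially the same route as the paper: build the connected-sum relative periodic domain from $q_2$ copies of $\mathcal{P}_1$ and $q_1$ copies of $\mathcal{P}_2$ joined along $q_1q_2$ bands, and feed the additivity of local multiplicities and Euler measure into the evaluation formula of Proposition \ref{Hedden-Levine}. The neck bookkeeping you flag as the remaining obstacle is exactly what the paper resolves by inspecting its local picture, finding $\hat\chi(\mathcal{P})=q_2\hat\chi(\mathcal{P}_1)+q_1\hat\chi(\mathcal{P}_2)-q_1q_2$ and basepoint multiplicity $q_1q_2$, after which the algebra closes just as you outline.
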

\begin{proof} Let $q_1$ and $q_2$ denote the order of $K_1$ and $K_2$, and fix a connected rational Seifert surface for each of $K_1$ and $K_2$. Let $\mathcal{P}_1$ and $\mathcal{P}_2$ be periodic domains representing $F_1$ and $F_2$ respectively. Following the construction above, we can construct a periodic domain $\mathcal{P}$ for a rational Seifert surface $F$ for $K_1\#K_2$ by taking $q_2$ copies of $\mathcal{P}_1$ and $q_1$ copies of $\mathcal{P}_2$ and connecting them along $q_1q_2$ arcs. Now we can compute $A(\mathbf{x}_1\otimes \mathbf{x}_2)$ using Proposition \ref{Hedden-Levine} and $\mathcal{P}$. 

\begin{figure}
\begin{center}
\includegraphics[scale=.7]{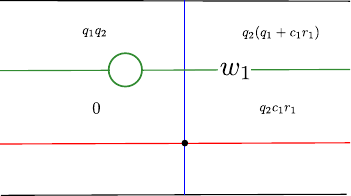}\hspace{1cm}
\includegraphics[scale=.7]{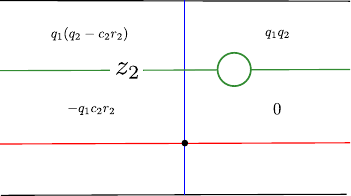}
\caption{Neighborhoods of the meridian of $K_1$ and $K_2$ respectively.}
\label{connectsum}
\end{center}
\end{figure}

From the diagram in Figure \ref{connectsum} we have:
\begin{itemize}
\item $\hat\chi(\mathcal{P})=q_2\hat\chi(\mathcal{P}_1)+q_1\hat\chi(\mathcal{P}_2)-q_1q_2$.
\item $n_{\mathbf{x}_1\otimes \mathbf{x}_2}(\mathcal{P})=q_2n_{\mathbf{x}_1}(\mathcal{P}_1)+q_1n_{\mathbf{x}_2}(\mathcal{P}_2)$.
\item $n_{w_1,z_2}(\mathcal{P})=q_1q_2+q_2c_1r_1-q_1c_2r_2$.
\end{itemize}
Putting this together, we have 
\begin{align*}
2q_1q_2A(\mathbf{x}_1\otimes\mathbf{x}_2)&=\hat\chi(\mathcal{P})+2n_{\mathbf{x}_1\otimes\mathbf{x}_2}(\mathcal{P})-n_{w_1,z_2}(\mathcal{P})\\
	&= q_2(\hat\chi(\mathcal{P}_1)+2n_{\mathbf{x}_1}(\mathcal{P}_1)-q_1-c_1r_1)\\
	&\quad\quad \quad\quad+q_1(\hat\chi(\mathcal{P}_2)+2n_{\mathbf{x}_2}(\mathcal{P}_2)-q_2+c_2r_2)\\
											&=q_2(2q_1A(\mathbf{x}_1))+q_1(2q_2A(\mathbf{x}_2)).
\end{align*}
This proves the claim. 
\end{proof}

Letting $\mathcal{F}_{\mathfrak{s}}(\mathbf{x})$ denote the highest Alexander filtration level inhabited by the generator $\mathbf{x}$.  Lemma \ref{additive} implies the following:

\begin{theorem} We have a filtered chain homotopy equivalence
\begin{small}
\begin{equation*}
CFK^\infty(Y_1\# Y_2, K_1\# K_2,\mathfrak{s})\cong\bigoplus_{\mathfrak{s}_1\#\mathfrak{s}_2=\mathfrak{s}}CFK^\infty(Y_1,K_1,\mathfrak{s}_1)\otimes_{\F_2[U,U^{-1}]} CFK^\infty(Y_2,K_2,\mathfrak{s}_2) 
\end{equation*}
\end{small}
where $\mathcal{F}_{\mathfrak{s}_1\#\mathfrak{s}_2}(\mathbf{x}_1\otimes\mathbf{x}_2)=\mathcal{F}_{\mathfrak{s}_1}(\mathbf{x}_1)+\mathcal{F}_{\mathfrak{s}_2}(\mathbf{x}_2)$
\end{theorem}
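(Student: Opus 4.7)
The plan is to deduce the theorem from the relative-$\s$-structure connected sum formula proved just above, combined with the additivity of the Alexander grading established in Lemma \ref{additive}. Specializing the previous theorem to $\xi_3 = \xi_0^\mathfrak{s}$ gives
\begin{equation*}
CFK^\infty(Y_1\#Y_2, K_1\#K_2, \mathfrak{s}) \cong \bigoplus_{\xi_1\#\xi_2 = \xi_0^\mathfrak{s}} CFK^\infty(Y_1,K_1,\xi_1) \otimes CFK^\infty(Y_2,K_2,\xi_2),
\end{equation*}
so the problem reduces to reindexing the direct sum by absolute $\s$-structures and then verifying the filtration formula.

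For the reindexing step, I would first check that the filling map is compatible with connected sum in the sense that $G_{Y_1\#Y_2, K_1\#K_2}(\xi_1\#\xi_2) = G_{Y_1,K_1}(\xi_1) \# G_{Y_2,K_2}(\xi_2)$. This follows from the geometric description of both operations in terms of nonvanishing vector fields, since the connected-sum region and the filled neighborhoods of $K_1, K_2$ are disjoint. Pairs $(\xi_1,\xi_2)$ with $\xi_1\#\xi_2 = \xi_0^\mathfrak{s}$ therefore partition according to the absolute pair $(\mathfrak{s}_1, \mathfrak{s}_2) := (G_{Y_1,K_1}(\xi_1), G_{Y_2,K_2}(\xi_2))$, which automatically satisfies $\mathfrak{s}_1\#\mathfrak{s}_2 = \mathfrak{s}$. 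For each such fixed pair, the contributing relative lifts form a $\Z$-torsor, but distinct lifts yield complexes that differ only by a shift in the $j$-filtration; after tensoring over $\F_2[U,U^{-1}]$, these shifts are absorbed and all lifts consolidate into the single summand $CFK^\infty(Y_1,K_1,\mathfrak{s}_1) \otimes_{\F_2[U,U^{-1}]} CFK^\infty(Y_2,K_2,\mathfrak{s}_2)$ determined by the distinguished choices $\xi_0^{\mathfrak{s}_i}$.

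For the filtration, note that $\mathcal{F}_\mathfrak{s}(\b{x})$ for an intersection-point generator is exactly the Alexander grading $A(\b{x})$ computed with respect to $\mathfrak{s}$, so the required identity $\mathcal{F}_{\mathfrak{s}_1\#\mathfrak{s}_2}(\b{x}_1\otimes\b{x}_2) = \mathcal{F}_{\mathfrak{s}_1}(\b{x}_1) + \mathcal{F}_{\mathfrak{s}_2}(\b{x}_2)$ is precisely the content of Lemma \ref{additive}. The main obstacle I anticipate is the bookkeeping in the reindexing: one must verify that collapsing the $\Z$-torsor of lifts via the $U$-action leaves no residual offset in the Alexander filtration, and that the identifications used are consistent with the distinguished choice $\xi_0^{\mathfrak{s}_i}$ for each absolute $\s$-structure on each factor, so that the $\Q$-valued filtration on the left-hand side matches the sum of Alexander gradings on the right-hand side summand by summand.
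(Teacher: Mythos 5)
Your proposal is correct and follows essentially the same route as the paper, which states this theorem as an immediate consequence of the amended relative connected-sum theorem together with the Alexander-grading additivity of Lemma \ref{additive}. You simply make explicit the reindexing from relative to absolute $\s$-structures (compatibility of the filling maps and consolidation of the $\Z$-torsor of lifts), which the paper leaves implicit.
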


\subsection{Properties of $\tau$}
Our invariants satisfy the following properties: 				
\begin{proposition} \label{properties}
Let $K\subset Y$ be a knot in a rational homology sphere. Then for any $\mathfrak{s}\in\S(Y)$, we have:
\begin{enumerate}		
\item $\tau_\mathfrak{s}(-Y,K)=-\tau_\mathfrak{s}(Y,K)$.
\item $\tau_{\mathfrak{s}}(Y,K^r)=\tau_{J\mathfrak{s}}(Y,K)$.
\item If $K_1$ and $K_2$ are knots in rational homology spheres $Y_1$ and $Y_2$. Then for $\s$-structures $\mathfrak{s}_1\in\S(Y_1)$ and $\mathfrak{s}_2\in\S(Y_2)$, we have 
\begin{equation*}
\tau_{\mathfrak{s}_1\#\mathfrak{s}_2}(Y_1\# Y_2, K_1\# K_2)=\tau_{\mathfrak{s}_1}(Y_1,K_1)+\tau_{\mathfrak{s}_2}(Y_2,K_2).
\end{equation*}
\end{enumerate}
\end{proposition}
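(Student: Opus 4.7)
Each of the three assertions follows from a chain-level identification established in the preceding subsections, together with a check that the defining conditions of $\tau_\mathfrak{s}$ are compatible. I would treat them in the order (2), (3), (1), which corresponds roughly to increasing difficulty.

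For (2), apply Proposition \ref{conjugation} directly. It furnishes an isomorphism of $\Z$-filtered chain complexes
\begin{equation*}
CFK^\infty(Y,K,\mathfrak{s}) \cong CFK^\infty(Y,K^r, J\mathfrak{s}),
\end{equation*}
and $\tau_\mathfrak{s}(Y,K)$ is extracted purely from the filtered chain homotopy type---via the identifications of $\widehat{CF}(Y,\mathfrak{s})$ and the maps appearing in Lemma \ref{TFAE}---so it is preserved. This gives $\tau_\mathfrak{s}(Y,K) = \tau_{J\mathfrak{s}}(Y,K^r)$; relabeling $\mathfrak{s} \mapsto J\mathfrak{s}$ and using $J^2 = \operatorname{id}$ produces the stated form.

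For (3), the plan is to invoke the K\"unneth-type theorem immediately preceding the proposition, together with Lemma \ref{additive}. The argument runs parallel to Ozsv\'ath and Szab\'o's original additivity proof for knots in $S^3$, translated to the $\tau^-_\mathfrak{s}$ formulation of Proposition \ref{tau-}. For the $\geq$ direction, if $\alpha_i \in HF^{<1}(Y_i,\mathfrak{s}_i)$ witnesses $\tau^-_{\mathfrak{s}_i}(Y_i,K_i)$, then $\alpha_1 \otimes \alpha_2$ lives at total filtration $\tau^-_{\mathfrak{s}_1} + \tau^-_{\mathfrak{s}_2}$ in the connected sum and still satisfies $\pi_* \hat\iota_* \neq 0$ (by multiplicativity of this composition on tensor products) while $P_m \psi_* \neq 0$. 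The reverse inequality follows from the standard K\"unneth spectral-sequence argument: any witness in the connected sum projects onto witnesses in each factor at total filtration at least $\tau^-_{\mathfrak{s}_1} + \tau^-_{\mathfrak{s}_2}$. Passing back to $\tau$ via Proposition \ref{tau-} (which shifts both sides by $1$) gives the claim.

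For (1), the plan is to apply Lemma \ref{dualcomplex}, which identifies $CFK^\infty_*(-Y,K,\mathfrak{s})$ with the $\F_2$-linear dual $CFK^*_\infty(Y,K,\mathfrak{s})$ via $[\b{x},i,j] \mapsto [\b{x},-i,-j]$. Under this duality the $j$-filtration reverses sign, subcomplexes $\{j \leq m\}$ become quotient complexes $\{j \geq -m\}$, and the pairs $(\iota,\pi)$ and $(\rho,\psi)$ are swapped with their transposes. I would then verify that the condition $\im(\rho_* \circ I_m) \cap \im(\pi_*) \neq 0$ defining $\tau_\mathfrak{s}(-Y,K)$ at level $k_\mathfrak{s} + m$ translates, under duality, into the condition defining $\tau^-_\mathfrak{s}(Y,K)$ at level $-(k_\mathfrak{s} + m) - 1$; one must also check that $k_\mathfrak{s}$ itself negates modulo $\Z$ under $Y \mapsto -Y$, since the first Chern class of the distinguished lift $\xi_0^\mathfrak{s}$ changes sign. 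The main obstacle is precisely this bookkeeping: carefully tracking in the large commutative diagram before Lemma \ref{TFAE} which maps go to which under the transpose, and how the filtration shift and the $k_\mathfrak{s}$-shift compose. Once verified, $\min$ becomes $\max$, and Proposition \ref{tau-} provides the identification $\tau_\mathfrak{s}(-Y,K) = -\tau_\mathfrak{s}(Y,K)$.
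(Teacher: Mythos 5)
Your treatments of (2) and (3) are essentially the paper's: (2) is read off from Proposition \ref{conjugation}, and (3) is the K\"unneth theorem for connected sums together with the additivity of the Alexander grading from Lemma \ref{additive}. The problem is (1), where you declare the heart of the argument to be ``bookkeeping.''

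The gap in (1) is that a purely formal dualization does not translate the defining condition of $\tau_\mathfrak{s}(-Y,K)$ into the condition defining $\tau^-_\mathfrak{s}(Y,K)$. The condition $\im(\rho_*\circ I_m)\cap\im(\pi_*)\neq 0$ is existential (some class in the filtration hits a nonzero element of $\im\pi_*$), and its $\F_2$-linear dual is a \emph{universally} quantified statement about functionals annihilating $\im(\rho_*\circ I_m)$; by contrast, Lemma \ref{TFAE} and the definition of $\tau^-$ are again existential in $\alpha$. Reconciling this mismatch is exactly where the hypothesis that $Y$ is a rational homology sphere enters, through Lemma \ref{c_Y}: because $\im(\rho_*)\cap\im(\pi_*)=\im(\pi_*\circ\hat\iota_*)$ is generated by the single element $c_Y$ at the bottom of the tower, the condition $\im(\rho_*\circ I_m)\cap\im(\pi_*)\neq 0$ becomes ``$c_Y\in\im(\rho_*\circ I_m)$,'' and only \emph{this} membership statement dualizes cleanly (a functional kills $\im(\rho_*\circ I_m)$ and is nonzero on $c_{-Y}$ if and only if the corresponding $\alpha\in HF^{<1}(Y,\mathfrak{s})$ satisfies $P_{-m-1}\circ\psi_*(\alpha)=0$ and $\pi_*\circ\hat\iota_*(\alpha)\neq 0$, the second equivalence again using $\im(\pi_*\circ\hat\iota_*)=\langle c_{-Y}\rangle$). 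Your outline never invokes $c_Y$, the tower, or finite-dimensionality of the reduced part, so as written it only yields the inequality $\tau_\mathfrak{s}(-Y,K)\geq-\tau_\mathfrak{s}(Y,K)$; note that this inequality is symmetric under exchanging $Y$ and $-Y$, so you cannot promote it to an equality just by running the same argument on $-Y$.

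For comparison, the paper's own proof of (1) obtains that inequality by the duality you describe, and then rules out strictness by a separate argument: assuming $-\tau_\mathfrak{s}(-Y,K)<\tau_\mathfrak{s}(Y,K)$, it repeatedly modifies a class $\alpha$ with $\pi_*\circ\hat\iota_*(\alpha)\neq 0$ to kill its values on witnesses in $H_*(\mathcal{F}_{-\tilde r-1}(-Y,K,\mathfrak{s}))$, reaching a contradiction with the finite dimensionality of the torsion part of $HF^+(-Y,\mathfrak{s})$; this step again leans on Lemma \ref{c_Y}. Your plan can be completed---and arguably more cleanly than the contradiction argument---but only after you insert Lemma \ref{c_Y} at the two places indicated above and use the ``for all $\alpha$'' reformulation of $\tau^-$ implicit in the proof of Proposition \ref{tau-}; also record, as you note, that reversing the orientation of $Y$ negates Alexander gradings, so the level $k_\mathfrak{s}+m$ is carried to $-(k_\mathfrak{s}+m)-1$.
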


\begin{proof}[Proof of Proposition \ref{properties}]

First we note that property (2) follows directly from Proposition \ref{conjugation}.

Now consider property (1). The proof is similar in spirit to the proof of \cite[Lemma 3.3]{OS-4ball}. For simplicity we use $\mathcal{F}_{r}(Y,K,\mathfrak{s})$ to denote the Alexander filtration at level $r=k_\mathfrak{s}+m$ of $\widehat{CF}(Y,\mathfrak{s})$ coming from the knot $K$ and $\mathcal{Q}_{r}(Y,K,\mathfrak{s})$ for the corresponding quotient complex.

As in the case where $K$ is null-homologous \cite{OS-4ball}, reversing the orientation of $Y$ changes the sign of the Alexander grading of each generator in $\widehat{CF}(Y,\mathfrak{s})$. In the short exact sequence,
\begin{equation*}
0\to\mathcal{F}_{r}(Y,K,\mathfrak{s})\to\widehat{CF}(Y,\mathfrak{s})\to \mathcal{Q}_{r}(Y,K,\mathfrak{s})\to 0
\end{equation*}
we can naturally identify $\mathcal{Q}_{r}(Y,K,\mathfrak{s})$ with $\mathcal{F}^*_{-r-1}(-Y,K,\mathfrak{s})$. In fact, we have an isomorphism of short exact sequences:
\begin{equation*}
\xymatrix{
0\ar[r]& \mathcal{F}_{r}(Y,K,\mathfrak{s})\ar[r]^-{i_{r}}\ar[d]& \widehat{CF}_*(Y,\mathfrak{s})\ar[r]^-{p_{r}}\ar[d]&\mathcal{Q}_{r}(Y,K,\mathfrak{s})\ar[r]\ar[d]&0\\
0\ar[r]& \mathcal{Q}^*_{-r-1}(-Y,K,\mathfrak{s})\ar[r]^-{p^*_{-r-1}}& \widehat{CF}^*(-Y,\mathfrak{s})\ar[r]^-{i^*_{-r-1}}&\mathcal{F}^*_{-r-1}(-Y,K,\mathfrak{s})\ar[r]&0
}
\end{equation*}
by Lemma \ref{dualcomplex}.

Since $\tau_{\mathfrak{s}}(Y,K)$ is defined in terms of the map $\rho_*$ to $HF^+(Y,\mathfrak{s})$ we must also understand how this map transforms when we reverse the orientation of $Y$. Recall from \cite{OS-properties} that $HF_*^+(Y,\mathfrak{s})\cong HF_{<1}^*(-Y,\mathfrak{s})$. Thus, we have the following diagram:
\begin{equation*}
\xymatrix{H_*(\mathcal{F}_{r}(Y,K,\mathfrak{s}))\ar[r]^-{I_{r}}\ar[d]^{\cong}& \widehat{HF}_*(Y,\mathfrak{s})\ar[r]^-{\rho_*}\ar[d]^{\cong} 
&HF^+_*(Y,\mathfrak{s})\ar[d]^{\cong}\\
H^*(\mathcal{Q}^*_{-r-1}(-Y,K,\mathfrak{s}))\ar[r]^-{P^*_{-r-1}}&\widehat{HF}^*(-Y,\mathfrak{s})\ar[r]^-{\psi^*}& HF^*_{<1}(-Y\mathfrak{s})
}\label{diagram}
\end{equation*}
which commutes. 

Note that $r\geq \tau_\mathfrak{s}(Y,K)$ if and only if $\im(\rho_*\circ I_r)$ is non-torsion. By the above diagrams, we see that $\im(\rho_*\circ I_r)=\im(\psi^*\circ P_{-r-1})$. Therefore, $\im(\rho_*\circ I_r)$ is non-torsion if and only if $\im(\psi^*\circ P_{-r-1})$ is non-torsion. By the Universal Coefficients Theorem and the fact that for a rational homology sphere, $\Hom_{\F_2}(HF^{<1}_*(-Y,\mathfrak{s}),\F_2)=\F_2[U]$ we see that $\im(\psi^*\circ P_{-r-1})$ is non-torsion if and only if there exists some non-torsion $\alpha\in HF^{<1}(-Y,\mathfrak{s})$ so that $P_{-r-1}\circ\psi_*(\alpha)\neq 0$, that is, if and only if $-r-1\leq \tau^{-}(-Y,K)$. Thus, $\tau^-(-Y,K)=-\tau(Y,K)-1$, so by Proposition \ref{tau-}, $\tau(-Y,K)=-\tau(Y,K)$. 
\\

The final argument for property (3) was suggested to me by Jen Hom and is due to Tye Lidman. We begin by proving the following inequality:
$$\tau_{\mathfrak{s}_1\#\mathfrak{s}_2}(Y_1\# Y_2,K_1\#K_2)\leq\tau_{\mathfrak{s}_1}(Y_1,K_1)+\tau_{\mathfrak{s}_2}(Y_2,K_2).$$
Let $r_1=\tau_{\mathfrak{s}_1}(Y_1,K_1)$ and $r_2=\tau_{\mathfrak{s}_2}(Y_2,K_2)$. Then for each $i=1,2$, there exists $\alpha_i\in HF^{<1}(Y_i,\mathfrak{s}_i)$ non-torsion and $\beta_i\in H_*(\mathcal{F}_{\mathfrak{s}_i,r_i}(Y_i,K_i))$ so that $\psi_*(\alpha_i)=I_{r_i}(\beta_i)\neq 0$. Since $\alpha_1$ and $\alpha_2$ are both non-torsion, $\alpha_1\otimes\alpha_2$ is also non-torsion. Moreover, this diagram
\begin{equation*}
\xymatrix{
HF^{<1}(Y_1,\mathfrak{s}_1)\otimes_{\F_2[U]} HF^{<1}(Y_2,\mathfrak{s}_2)\ar[r]\ar[d]^{\psi_*\otimes\psi_*}& HF^{<1}(Y_1\# Y_2,\mathfrak{s}_1\# \mathfrak{s}_2)\ar[d]^{\psi_*}\\
\widehat{HF}(Y_1,\mathfrak{s}_1)\otimes_{\F_2}\widehat{HF}(Y_2,\mathfrak{s}_2)\ar[r]^{\cong}&\widehat{HF}(Y_1\# Y_2,\mathfrak{s}_1\#\mathfrak{s}_2) 
}
\end{equation*}
commutes. (The bottom isomorphism is due to the fact that we tensored over $\F_2$ so the relevant $\Tor$-groups vanish.) Thus, 
\begin{equation*}
I_{r_1}(\beta_1)\otimes I_{r_2}(\beta_2)=\psi_*(\alpha_1)\otimes\psi_*(\alpha_2)=\psi_*(\alpha_1\otimes\alpha_2).
\end{equation*}
On the other hand,  
\begin{equation*}
H_*(\mathcal{F}_{\mathfrak{s}_1,r_1}(Y_1,K_1))\otimes_{\F_2}H_*(\mathcal{F}_{\mathfrak{s}_2,r_2}(Y_2,K_2))\cong H_*(\mathcal{F}_{\mathfrak{s}_1,r_1}(Y_1,K_1)\otimes_{\F_2}\mathcal{F}_{\mathfrak{s}_2,r_2}(Y_2,K_2))
\end{equation*}
therefore
\begin{equation*}
I_{r_1}(\beta_1)\otimes I_{r_2}(\beta_2)=I_{r_1+r_2}(\beta_1\otimes\beta_2).
\end{equation*}
and $I_{r_1+r_2}(\beta_1\otimes\beta_2)\neq 0$ since both $I_{r_i}(\beta_i)\neq 0$.

Thus, $\alpha_1\otimes \alpha_2$ is non-torsion and $\psi_*(\alpha_1\otimes\alpha_2)=I_{r_1+r_2}(\beta_1\otimes\beta_2)\neq 0$. This implies that $r_1+r_2\geq \tau_{\mathfrak{s}_1\#\mathfrak{s}_2}(Y_1\# Y_2,K_1\# K_2)$.

To obtain the other inequality, note that the above also implies that,
$$\tau_{\mathfrak{s}_1\#\mathfrak{s}_2}(-Y_1\# -Y_2,K_1\#K_2)\leq\tau_{\mathfrak{s}_1}(-Y_1,K_1)+\tau_{\mathfrak{s}_2}(-Y_2,K_2).$$
Applying property (1), we obtain the desired result. 

\end{proof}

It is worth pointing out that in Proposition \ref{properties} (1) and (3) are the same as for knots in the 3-sphere. Furthermore, if $K$ is a knot in the 3-sphere (2) implies that reversing the orientation of the knot does not change $\tau(K)$ since there is only one $\s$-structure on $S^3$. In particular, for $K\subset S^3$, properties (1) and (2) together imply that $\tau(-K)=-\tau(K)$ where $-K$ is the reverse mirror of $K$, or equivalently, $-K$ is the inverse of $K$ in the knot concordance group.

\section{Large Surgery}

Ozsv\'ath and Szab\'o proved that the Heegaard Floer homology of surgery along a null-homologous knot can be computed from the chain complex $CFK^\infty(Y,K)$ using their ``large surgery" formula \cite{OS-knotinvariants}. In \cite{OS-rationalsurgeries}, they give a similar construction for rationally null-homologous knots and any choice of longitude. We apply their theorem using $\lambda_{\can}$ as our choice of longitude. Specifically, this choice of longitude allows us to enumerate 2-handle cobordism maps on Floer homology using the Alexander grading of $K$ and, in turn, extract 4-dimensional information from our $\tau$-invariants.

Let $K$ be a knot in a rational homology sphere $Y$ and let $Y_{-n}(K)$ denote the 3-manifold obtained by  performing $(-n)$-surgery to $Y$ along $K$ with respect to the canonical longitude. Let $X_{-n}(K)$ be the 4-manifold with $\partial X_{-n}(K)=-Y\sqcup Y_{-n}(K)$ obtained by attaching a 4-dimensional 2-handle to $K\times \{1\}\subset Y\times I$ with $(-n)$-framing with respect to the canonical longitude.

For each $\s$-structure $\mathfrak{t}$ in $\S(X_{-n}(K))$, this 4-manifold induces a map on Floer homology:
\begin{equation*}
F_{X_{-n}(K), \mathfrak{t}}:\widehat{HF}(Y,\mathfrak{t}|_Y)\to\widehat{HF}(Y_{-n}(K),\mathfrak{t}|_{Y_{-n}(K)}).
\end{equation*}
The large surgery theorem will tell us how to compute the above map from the $CFK^\infty(Y, K,\mathfrak{s})$ complexes. We can enumerate these maps by enumerating $\s$-structures on $X_{-n}(K)$. To do so, first calculate some more information about the topology of $X_{-n}(K)$. 

Let $C$ be the core of the added 2-handle in $X_{-n}(K)$. Then $[C]$ represents the generator of $H_2(X_{-n}(K),Y)\cong \Z$. Now fix a rational $q$-Seifert surface $F$ for $K$ in $Y$. Attaching $q$ parallel copies of $-C$ to $F$ along $K$ we obtain a 2-complex $F_{qC}=F\cup -qC$ which represents a homology class in $H_2(X_{-n}(K))$. Under the map 
\begin{equation*}
\iota:H_2(X_{-n}(K))\to H_2(X_{-n}(K),Y)
\end{equation*}
coming from the long exact sequence of the pair $(X_{-n}(K), Y)$ we have that $\iota([F_{ qC}])=-q[C]$. Thus, for a cohomology class $\alpha$ in $H^2(X_{-n}(K);\Z)$ we can define a $\Q$-valued pairing: 
\begin{equation*}
\langle \alpha,-[C]\rangle=\frac{1}{q}\langle \alpha,[F_{qC}]\rangle.
\end{equation*} 
Similarly, we also have,
\begin{equation*}
[C]\cdot[C]=\frac{1}{q^2}[F_{qC}]\cdot [F_{qC}].
\end{equation*}
Since $[\partial F]=c(d\lambda_{\can}+r\mu)$, we can compute $[F_{qC}]^2$ directly as the intersection number of $[\partial F]$ and $q[-\partial C]$ in $\partial\nu(K)$. 
\begin{equation}\label{selfintersection}
[\partial F]\cdot q[-\partial C]= q(-cr-nq).
\end{equation}

\subsection{Large surgery along rationally null-homologous knots} 
Choose a Heegaard diagram $(\Sigma,\b\alpha,\b\beta, w, z)$ for $(Y,K)$ so that $\b\beta=\b\beta_0\cup\mu$. Here $\b\beta_0=(\beta_1,\ldots,\beta_{g-1})$ and $\alpha_g$ and $\beta_g=\mu$ intersect in a single point $p$. Now consider an annular neighborhood of $\mu$ in the Heegaard diagram $(\Sigma,\b\alpha,\b\beta,w,z)$. Let $\lambda_{-n}=\lambda_{\can}-n\mu$ and let $\b\gamma$ denote the tuple $\b\beta_0\cup\lambda_{-n}$. Then $(\Sigma,\b\alpha,\b\gamma, w,z)$ is a Heegaard diagram for $Y_{-n}(K)$. In particular, $\lambda_{-n}$ intersects $\alpha_g$ in exactly $n$ points in a small neighborhood of the meridian which we refer to as the \emph{winding region}. 

Note that every intersection point $\mathbf{x}\in\intpt$ has $x_g=p$. Let $\mathbf{y}\in\intptt$. We say $\mathbf{y}$ is \emph{supported in the winding region} if $y_g$ is one of the $n$ points in the winding region. If $\mathbf{y}\in\intptt$ is supported in the winding region, then there is a ``closest'' point $\mathbf{x}\in\intpt$. Let $\b\theta$ be the top generator in homology of $\widehat{HF}(\#_{g-1}(S^1\times S^2))$. Then there is a canonical ``small triangle'' $\psi\in\pi_2(\mathbf{x},\b\theta,\mathbf{y})$ supported there. Recall the function 
\begin{equation*}
f(\mathbf{x})=\langle c_1(\mathfrak{s}_w(\psi)), [C]\rangle +[C]^2-2(n_w(\psi)-n_z(\psi))
\end{equation*}
defined by Ozsv\'ath and Szab\'o in \cite{OS-rationalsurgeries} where $\psi\in\pi_2(\mathbf{x},\theta,\mathbf{y})$ is a small triangle. Ozsv\'ath and Szab\'o show that this function depends only on $\mathbf{x}$. Their argument is revisited and expanded in \cite{Mark-Tosun}. In fact, by using Lemma \ref{periodicdom} we will see that this function actually computes the Alexander grading of the generator $\mathbf{x}$. 

First, we recall a few more facts and definitions from \cite{OS-rationalsurgeries}. The tuple $(\Sigma,\b\alpha,\b\beta,\b\gamma)$ forms a Heegaard triple-diagram that specifies a 4-manifold with three boundary components, $-Y$, $Y_{-n}(K)$ and $\dis\#_{g-1}(S^1\times S^2)$. Capping the final boundary component off with $\natural_{g-1} S^1\times B^3$ gives us a description of the cobordism $X_{-n}(K)$. In this 4-manifold, a triply periodic domain is a 2-chain $\mathcal{P}$ whose boundary is a sum of $\b\alpha$, $\b\beta$, and $\gamma_g$ curves. Given a surface in $X_{-n}(K)$ we can represent it via a triply periodic domain. 

Recall that for a triply periodic domain we also have a $c_1$-evaluation formula so that if $\psi$ is a Whitney triangle, 
\begin{equation}\label{c_1}
\langle c_1(\mathfrak{s}_w(\psi)), \mathcal{P}_\abc\rangle= \hat\chi(\mathcal{P}_\abc)+\#\partial \mathcal{P}_\abc+2\sigma(\psi,\mathcal{P}_\abc)-2n_w(\mathcal{P}_\abc)
\end{equation}
where $\sigma(\psi,\mathcal{P}_\abc)$ is the dual spider number \cite[Section 2.5]{OS-rationalsurgeries}. 

\begin{lemma} \label{Alexandergrading}
Let $(\Sigma,\b\alpha,\b\beta,w,z)$ be a doubly pointed Heegaard diagram for $K\subset Y$, and let $(\Sigma,\b\alpha,\b\beta,\b\gamma_n)$ be the related Heegaard diagram for the cobordism $X_{-n}(K)$ described above. Then for any $\mathbf{x}\in\intpt$ 
\begin{equation*}
f(\mathbf{x})=2A(\mathbf{x})
\end{equation*}
where $A(\mathbf{x})$ is the Alexander grading of $\mathbf{x}$.  
\end{lemma}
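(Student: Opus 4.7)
The plan is to derive the desired equality by matching two $c_1$-evaluation formulas: Proposition~\ref{Hedden-Levine} computes $\langle c_1(\mathfrak{s}_{w,z}(\b x)),[F]\rangle$, and hence $2qA(\b x)$, in terms of a relative periodic domain $\mathcal{P}_F$ for $F$, while equation~(\ref{c_1}) computes $\langle c_1(\mathfrak{s}_w(\psi)),[\mathcal{P}_\abc]\rangle$ for a triply periodic domain in the Heegaard triple $(\Sigma,\b\alpha,\b\beta,\b\gamma_n)$. The first step is to pick $\mathcal{P}_F$ in the normal form given by Lemma~\ref{periodicdom}, so that
\begin{equation*}
\partial\mathcal{P}_F = q\lambda_{\can} + cr\mu + q\alpha_g + \sum_{i=1}^{g-1} n_{\alpha_i}\alpha_i + \sum_{i=1}^{g-1} n_{\beta_i}\beta_i.
\end{equation*}
The key move is then to substitute $\lambda_{\can} = \gamma_g + n\mu$: viewing $\mathcal{P}_F$ as a 2-chain on $\Sigma$, this rewrites its boundary as $q\gamma_g + (cr+nq)\mu + q\alpha_g + \cdots$, a valid linear combination of $\b\alpha$, $\b\beta$, and $\gamma_g$. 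The resulting triply periodic domain $\mathcal{P}_\abc$ represents $[F\cup qC]\in H_2(X_{-n}(K))$ by the construction in Section~\ref{framings}.

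Next I would apply formula~(\ref{c_1}) to $\mathcal{P}_\abc$ and the canonical small triangle $\psi\in\pi_2(\b x,\b\theta,\b y)$ with $\b y$ in the winding region. Because $\mathcal{P}_\abc$ and $\mathcal{P}_F$ agree as 2-chains on $\Sigma$, we have $\hat\chi(\mathcal{P}_\abc)=\hat\chi(\mathcal{P}_F)$ and $n_w(\mathcal{P}_\abc)=n_w(\mathcal{P}_F)$. The dual spider number $\sigma(\psi,\mathcal{P}_\abc)$ must be unpacked locally using the picture in Figure~\ref{WR}: since $\psi$ is supported in a tiny neighborhood of the winding region, the legs of the spider connecting its barycenter to $\b x$, $\b\theta$, $\b y$ meet only the local pieces of $\mathcal{P}_\abc$ described there, and $\sigma(\psi,\mathcal{P}_\abc)$ should equal $n_{\b x}(\mathcal{P}_F)$ up to explicit contributions coming from the $\gamma_g$-coefficient $q$ and the $(cr+nq)\mu$-coefficient.

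Combining the two formulas then gives
\begin{equation*}
\langle c_1(\mathfrak{s}_w(\psi)),[F\cup qC]\rangle = \bigl(\langle c_1(\mathfrak{s}_{w,z}(\b x)),[F]\rangle - q\bigr) + (\text{winding-region terms}),
\end{equation*}
and a direct comparison with the self-intersection computation $[F\cup qC]^2 = q(cr-nq)$ from equation~(\ref{selfintersection}), together with the identities
\begin{equation*}
\langle c_1(\mathfrak{s}_w(\psi)),[C]\rangle = \tfrac{1}{q}\langle c_1(\mathfrak{s}_w(\psi)),[F\cup qC]\rangle, \qquad [C]^2 = \tfrac{cr-nq}{q},
\end{equation*}
should show that the extra terms cancel exactly against the $[C]^2$ and $-2(n_w(\psi)-n_z(\psi))$ contributions in the definition of $f(\b x)$, leaving $f(\b x)=2A(\b x)$.

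The main obstacle will be the local accounting near the winding region: identifying $\sigma(\psi,\mathcal{P}_\abc)$ with $n_{\b x}(\mathcal{P}_F)$ up to the correct boundary corrections, and matching the $\#\partial\mathcal{P}_\abc$ term and the difference $n_w(\psi)-n_z(\psi)$ with the $\bar n_{w,z}(\mathcal{P}_F)$ term and the framing contribution $cr-nq$. This is a direct but delicate diagrammatic computation using Figure~\ref{WR} and the conventions for the dual spider number from \cite{OS-rationalsurgeries}.
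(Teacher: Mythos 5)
Your proposal follows essentially the same route as the paper: it compares Proposition \ref{Hedden-Levine} applied to the relative periodic domain of Lemma \ref{periodicdom} with the triple-diagram formula (\ref{c_1}) applied to the triply periodic domain obtained by rewriting $q\lambda_{\can}$ as $q\gamma_g+qn\mu$, reduces to the small triangle with $n_w(\psi)=n_z(\psi)=0$, and matches the leftover terms against $[C]^2=(cr-nq)/q$ from (\ref{selfintersection}). The winding-region accounting you defer (together with the caveat that $\mathcal{P}_\abc$ and $\mathcal{P}_F$ differ by rectangles in the winding region rather than literally agreeing as 2-chains, which is why $\hat\chi$ and $n_w$ still agree) is precisely the local multiplicity computation the paper carries out using Figures \ref{WR} and \ref{winding}.
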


\begin{proof}
By Lemma \ref{periodicdom}, we can find a relative periodic domain $\mathcal{P}_{F}$ representing $[F]$ with 
\begin{equation*}
\partial \mathcal{P}_{F}=q\lambda_{\can}+cr\mu+q\alpha_g+\sum_{i=1}^{g-1} n_{\alpha_i}\alpha_i +\sum_{i=1}^{g-1} n_{\beta_i}\beta_i.
\end{equation*}
There is also a triply periodic domain $\mathcal{P}_{\abc}$ representing $[F_{qC}]$ with 
\begin{equation*}
\partial \mathcal{P}_{\abc}=q\lambda_n+(qn+cr)\mu+q\alpha_g+\sum_{i=1}^{g-1} n_{\alpha_i}\alpha_i +\sum_{i=1}^{g-1} n_{\beta_i}\beta_i.
\end{equation*}
We claim that 
\begin{equation} 
\langle c_1(\mathfrak{s}_{w,z}(\mathbf{x})), \mathcal{P}_{F}\rangle+q=\langle c_1(\mathfrak{s}_w(\psi)), \mathcal{P}_\abc\rangle-qn-cr- 2q(n_w(\psi)-n_z(\psi)).
\end{equation}

Since the right hand side only depends on $\mathbf{x}$, it suffices to prove the statement for the Whitney triangle $\psi$ with $n_w(\psi)=n_z(\psi)=0$. This follows by applying the first Chern class formulas as we show below. The diagram in Figure \ref{winding} shows the winding region for $-6$-surgery along $K$ and the multiplicities for the corresponding periodic domain $\mathcal{P}_\abc$.
\begin{figure}
\begin{center}
\includegraphics[scale=.9]{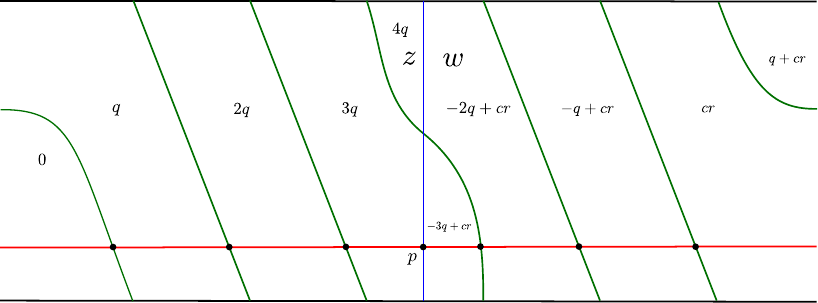}
\end{center}
\caption{The winding region for $-6$-surgery along $K$.} 
\label{winding}
\end{figure}

First consider the domain $\mathcal{P}_{F}$. Since $\mathbf{x}$ is of the form $\mathbf{x}=(x_1,\ldots, x_{g-1},p)$ we compute from Figure \ref{WR} that 
\begin{equation*}
n_{p}(\mathcal{P}_{F})=\frac{1}{4}(0+q+q+cr+cr)=\frac{2q+2cr}{4}=\frac{1}{2}(q+cr)
\end{equation*}
and that
\begin{equation*}
\bar n_{w,z}(\mathcal{P}_{F})=\frac{1}{2}(0+q)+\frac{1}{2}(cr+q+cr)=q+cr.
\end{equation*} 
Thus $2n_{p}(\mathcal{P}_{F})-\bar n_{w,z}(\mathcal{P}_{F})=0$. Applying Proposition \ref{Hedden-Levine} for the relative periodic domain $\mathcal{P}_F$ we see that
\begin{align*}
\langle c_1(\mathfrak{s}_{w,z}(\mathbf{x})), \mathcal{P}_{F}\rangle+q&=\hat\chi(\mathcal{P}_{F})+2n_\mathbf{x}(\mathcal{P}_{F})-\bar n_{w,z}(\mathcal{P}_{F})\\
&=\hat\chi(\mathcal{P}_{F})+2\sum_{i=1}^{g-1}n_{x_i}(\mathcal{P}_{F}).
\end{align*}

Now, fixing a point $\mathbf{y}=(y_1,\ldots,y_g)$ in the interior of $\psi$, we apply the $c_1$-evaluation formula of equation (\ref{c_1}) for the triply periodic domain $\mathcal{P}_{\abc}$. 

From Figure \ref{winding}, we calculate the dual spider number is $\sigma(\psi,\mathcal{P}_\abc)=\sum_{i=1}^gn_{y_i}$ and that the multiplicity at $w$ differs from the multiplicity at $y_g$ by $q$. Specifically, $n_{y_g}(\mathcal{P}_\abc)=n_w(\mathcal{P}_\abc)-q$. Therefore, $$2\sigma(\psi,\mathcal{P}_\abc)-2n_w(\mathcal{P}_\abc)= -2q+\sum_{i=1}^{g-1}n_{y_i}.$$
Thus,
\begin{align*}
\langle c_1(\mathfrak{s}_w(\psi)), \mathcal{P}_\abc\rangle&=\hat\chi(\mathcal{P}_\abc)+2q+qn+cr\\
&\:\:\;\;\;\;\;\;\:\:\:\:\:+\sum_{i=1}^{g-1}n_{\alpha_i}+\sum_{i=1}^{g-1}n_{\beta_i}+2\sum_{i=1}^{g-1}n_{y_i}-2q\\
&=\hat\chi(\mathcal{P}_\abc)+qn+cr+\sum_{i=1}^{g-1}(2n_{y_i}+n_{\alpha_i}+n_{\beta_i}).
\end{align*}

Outside of the winding region, we have that $2n_{x_i}(\mathcal{P}_{F})=2n_{y_i}(\mathcal{P}_\abc)+n_{\alpha_i}+n_{\beta_i}$. Therefore, putting this all together, along with the fact that $\hat\chi(\mathcal{P}_{F})=\hat\chi(\mathcal{P}_\abc)$ we obtain the result.
\end{proof}

Now we will use Lemma \ref{Alexandergrading} to interpret \cite[Theorem 4.1]{OS-rationalsurgeries}. However, here we phrase everything in terms of negative surgery. Let $\mathcal{C}_\mathfrak{s}=CFK^\infty(Y,K,\mathfrak{s})$.

\begin{theorem}\label{largesurgery}
Let $K\subset Y$ be knot of order $q$ in a rational homology sphere. Let $\mathfrak{t}_{\mathfrak{s},m}\in\S(X_{-n}(K))$ be the unique $\s$ structure on $X_{-n}(K)$ such that $\mathfrak{t}_{\mathfrak{s},m}|_{Y}=\mathfrak{s}$ and
\begin{equation*}
\langle c_1(\mathfrak{t}_{\mathfrak{s},m}),[F_{qC}]\rangle -nq-cr=2q(k_\mathfrak{s}+m).
\end{equation*} 
Let $\mathfrak{r}_m$ denote the restriction of $\mathfrak{t}_{m}^\mathfrak{s}$ to $Y_{-n}(K)$. Then for all sufficiently large $n$, the complex $CF^+(Y_{-n}(K), \mathfrak{r}_m)$ is isomorphic to $\mathcal{C}_{\mathfrak{s}}\{i=0 \text{ or } j=-m\}$ and there are maps $v^+$ and $\Phi^\mathfrak{s}_{-n,m}$ so that the following diagram commutes:   
\begin{equation*}
\xymatrix{\mathcal{C}_\mathfrak{s}\{i\geq 0\} \ar[r]^-{v+}\ar[d]& \mathcal{C}_\mathfrak{s}\{i=0 \text{ or } j=-m\}\ar[d]^{\Phi_{-n,m}^\mathfrak{s}}\\
CF^+(Y,\mathfrak{s})\ar[r]^-{F^\mathfrak{s}_{-n,m}}& CF^+(Y_{-n}(K), \mathfrak{r}_m).\\}
\end{equation*}
\end{theorem}
\begin{proof}

Consider the map $$\Phi_{-n,m}^{\mathfrak{s}}: CFK^\infty(Y,K,\mathfrak{s})\to CF^\infty(Y_{-n}(K), \mathfrak{r}_m)$$ defined by $$\Phi_{-n,m}^{\mathfrak{s}}([\mathbf{x},i,j])=\sum_{\mathbf{y}\in\intptt}\sum_{\substack{\psi\in\pi_2(\mathbf{x},\b\theta,\mathbf{y})\\ n_w(\psi)-n_z(\psi)=i-j+m} }\#\widehat{\mathcal{M}}(\psi)\cdot[y,i-n_w(\psi)].$$

The proof now follows from the argument of \cite[Theorem 4.1]{OS-knotinvariants} and applying Lemma \ref{Alexandergrading}.
\end{proof}

\subsection{A four-dimensional interpretation of $\tau_\mathfrak{s}(Y,K)$}
The large surgery theorem now allows us to relate $\tau_\mathfrak{s}(Y,K)$ to the map induced on Floer homology by the cobordism $X_{-n}(K)$.  Let $K$ be a knot of order $q$ in a rational homology sphere $Y$. Let
\begin{equation*}
\widehat{F}_{-n,m}^\mathfrak{s}: \widehat{HF}(Y,\mathfrak{s})\to \widehat{HF}(Y_{-n}(K),\mathfrak{r}_m)
\end{equation*}	
be the map induced by the 4-manifold cobordism $X_{-n}(K)$ from $Y$ to $Y_{-n}(K)$, where $\mathfrak{r}_m$ is the restriction to $Y_{-n}(K)$ of the unique $\s$-structure $\mathfrak{t}_m$ on $X_{-n}(K)$ satisfying $\mathfrak{t}_m|_Y=\mathfrak{s}$ and 
\begin{equation*}
\langle c_1(\mathfrak{t}_m),[F_{qC}]\rangle -nq-cr=2q(k_\mathfrak{s}+m).
\end{equation*} 

\begin{proposition}\label{4-dimtau}
Let 
\begin{equation*}
\mathcal{S}=\im(\rho_*)\cap\im(\pi_*)\subset HF^+(Y,\mathfrak{s}).
\end{equation*}
Then for all $|n|$ sufficiently large, we have the following: 
\begin{itemize}
\item If $k_\mathfrak{s}+m<\tau_{\mathfrak{s}}(Y,K)$, then for all $\beta\in\rho_*^{-1}(\mathcal{S})$, we have $\widehat{F}_{-n,m}^\mathfrak{s}(\beta)\neq 0$.
\item If $k_\mathfrak{s}+m>\tau_{\mathfrak{s}}(Y,K)$ then there exists $\beta\in\rho_*^{-1}(\mathcal{S})$ such that $\widehat{F}_{-n,m}^\mathfrak{s}(\beta)=0$. 
\end{itemize} 
\end{proposition}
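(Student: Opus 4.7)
The strategy is to model the cobordism map via Theorem \ref{largesurgery} and then detect its vanishing or non-vanishing on $\beta$ using the structural maps $\rho_*$, $P_{m-1}$, $I_{m-1}$ that appear in the definition of $\tau_{\mathfrak{s}}(Y,K)$. By Theorem \ref{largesurgery}, for $|n|$ sufficiently large the plus-level cobordism map $F^+_{-n,m}$ is chain-homotopic to the projection
\[
v^+\colon \mathcal{C}_\mathfrak{s}\{i\geq 0\}\twoheadrightarrow \mathcal{C}_\mathfrak{s}\{\min(i,j-m)\geq 0\},
\]
whose kernel is the sub-complex $\mathcal{C}_\mathfrak{s}\{i\geq 0,\ j\leq m-1\}$. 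Passing to the kernel of $U$ gives a chain-level model for $\widehat{F}^\mathfrak{s}_{-n,m}$ whose kernel is $\mathcal{C}_\mathfrak{s}\{i=0,\ j\leq m-1\}=\mathcal{F}_{\mathfrak{s},m-1}$. In particular $\widehat{F}^\mathfrak{s}_{-n,m}$ factors at the chain level through $P_{m-1}\colon \widehat{CF}(Y,\mathfrak{s})\to\mathcal{Q}_{\mathfrak{s},m-1}$, and the natural maps $\rho$ fit the two cobordism maps into a commuting square $\rho_*\circ \widehat{F}^\mathfrak{s}_{-n,m}=v^+_*\circ \rho_*$.

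The case $k_\mathfrak{s}+m>\tau_\mathfrak{s}(Y,K)$ will follow from the factorization. Since $k_\mathfrak{s}+(m-1)\geq \tau_\mathfrak{s}(Y,K)$, the definition of $\tau$ supplies $\beta'\in H_*(\mathcal{F}_{\mathfrak{s},m-1})$ with $\rho_*I_{m-1}(\beta')=c_Y\in\mathcal{S}$; setting $\beta:=I_{m-1}(\beta')$ gives $\rho_*(\beta)=c_Y\in\mathcal{S}$ and $P_{m-1}(\beta)=0$, so $\widehat{F}^\mathfrak{s}_{-n,m}(\beta)=0$.

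For the case $k_\mathfrak{s}+m<\tau_\mathfrak{s}(Y,K)$ and $\beta$ with $\rho_*(\beta)=c_Y$, I would apply $\rho_*$: the commuting square yields $\rho_*\widehat{F}^\mathfrak{s}_{-n,m}(\beta)=v^+_*(c_Y)$, and it suffices to show $v^+_*(c_Y)\neq 0$. Vanishing would produce a cycle in $\mathcal{C}_\mathfrak{s}\{i\geq 0,\ j\leq m-1\}$ representing $c_Y$ in $HF^+$. Because $c_Y$ lies in $\im(\pi_*)$ and is the bottom of the $U$-tower (so $U\cdot c_Y=0$ in $HF^+$), the positive-$i$ part of such a representative can be absorbed by subtracting a suitable $U$-boundary without raising the $j$-filtration, leaving a representative in $\mathcal{C}_\mathfrak{s}\{i=0,\ j\leq m-1\}$. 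This puts $c_Y\in\im(\rho_*I_{m-1})\cap\im(\pi_*)$, so $\tau_\mathfrak{s}(Y,K)\leq k_\mathfrak{s}+m-1$, contradicting $k_\mathfrak{s}+m<\tau_\mathfrak{s}(Y,K)$; hence $v^+_*(c_Y)\neq 0$ and $\widehat{F}^\mathfrak{s}_{-n,m}(\beta)\neq 0$.

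The principal technical obstacle is this reduction from a $CF^+$-level representative of $c_Y$ at a prescribed Alexander filtration level down to a hat-level representative at the same filtration level. It requires a careful chain-level bookkeeping that exploits $U\cdot c_Y=0$ in $HF^+(Y,\mathfrak{s})$, and it is here that the structure of $c_Y$ as the bottom of the tower in a rational homology sphere plays a decisive role; the rest of the argument is a formal consequence of the commuting square in Step 1 together with the definition of $\tau_\mathfrak{s}(Y,K)$.
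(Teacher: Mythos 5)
Your treatment of the second bullet is fine and is essentially the paper's argument: the chain-level kernel of the hat model of $\widehat{F}^{\mathfrak{s}}_{-n,m}$ is $\mathcal{F}_{\mathfrak{s},m-1}$, so the map factors through $p_{m-1}$, and since $k_\mathfrak{s}+(m-1)\geq\tau_\mathfrak{s}(Y,K)$ the definition of $\tau$ together with Lemma \ref{c_Y} produces $\beta=I_{m-1}(\beta')$ with $\rho_*(\beta)=c_Y$ and $P_{m-1}(\beta)=0$. The problem is the first bullet. There you replace the hat-level argument by a plus-level one, reducing to the claim that $v^+_*(c_Y)\neq 0$ whenever $k_\mathfrak{s}+m<\tau_\mathfrak{s}(Y,K)$, and you justify this by an ``absorption'' step asserting that a $CF^+$-cycle representing $c_Y$ in $\mathcal{C}\{i\geq 0,\ j\leq m-1\}$ can be pushed to one in $\mathcal{C}\{i=0,\ j\leq m-1\}$ using only $U\cdot c_Y=0$ and $c_Y\in\im(\pi_*)$. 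That absorption principle is false: for the right-handed trefoil in $S^3$ (staircase with generators $a,b,c$ of Alexander gradings $1,0,-1$ and $\partial[b,i,i]=[a,i-1,i]+[c,i,i-1]$) the bottom of the tower is represented by the cycle $[c,1,0]\in \mathcal{C}\{i\geq 0,\ j\leq 0\}$, while every cycle in $\mathcal{C}\{i=0,\ j\leq 0\}$ is a multiple of $[c,0,-1]=\partial[b,0,0]$ and hence nullhomologous in $CF^+$; this is exactly consistent with $\tau=1$, so no bookkeeping exploiting only $U\cdot c_Y=0$ can produce the desired representative.

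Worse, the statement you reduce to is itself false, so the gap cannot be repaired within your framework. Take $T(2,5)\subset S^3$, with staircase generators $e_0,\dots,e_4$ of Alexander gradings $2,1,0,-1,-2$, $\partial[e_1,i,i+1]=[e_0,i-1,i+1]+[e_2,i,i]$ and $\partial[e_3,i,i-1]=[e_2,i-1,i-1]+[e_4,i,i-2]$; here $\tau=2$ and $c_Y=[[e_0,0,2]]=[[e_2,1,1]]=[[e_4,2,0]]$. For $m=1<\tau$ the cycle $[e_4,2,0]$ lies in $\ker(v^+)=\mathcal{C}\{i\geq0,\ j\leq 0\}$, so $v^+_*(c_Y)=0$ even though $k_\mathfrak{s}+m<\tau$; meanwhile the hat-level map is nonzero on the generator (its image $[e_0,0,2]$ survives in $H_*(\mathcal{C}\{\min(i,j-1)=0\})$), as the Proposition asserts. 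Thus $\rho_*\circ\widehat{F}^{\mathfrak{s}}_{-n,m}(\beta)$ can vanish while $\widehat{F}^{\mathfrak{s}}_{-n,m}(\beta)\neq0$, and nonvanishing must be detected in the quotient $\mathcal{Q}_{\mathfrak{s},m}$, not in $HF^+$ of the surgered manifold. The paper's proof does exactly this and stays at the hat level: if $\beta\in\rho_*^{-1}(\mathcal{S})$ were in $\im(I_m)$ then $c_Y\in\im(\rho_*\circ I_m)\cap\im(\pi_*)$, contradicting $k_\mathfrak{s}+m<\tau_\mathfrak{s}(Y,K)$; hence $P_m(\beta)\neq0$ by exactness, and since the composition of $\widehat{F}^{\mathfrak{s}}_{-n,m}$ with the projection to $\mathcal{Q}_{\mathfrak{s},m}$ is $P_m$ (the right-hand square of the comparison diagram), $\widehat{F}^{\mathfrak{s}}_{-n,m}(\beta)\neq 0$. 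You should replace your first-bullet argument with this one.
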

\begin{proof} We adapt the argument given in \cite{OS-4ball}. 
Let $\mathcal{C}=CFK^\infty(Y,K,\mathfrak{s})$. Note that the diagram
\begin{equation*}
\begin{small}
\xymatrix{
0\ar[r]&\mathcal{C}_{\{i=0,j\leq m\}}\cong \mathcal{F}_{\mathfrak{s},m}\ar[r]^{i_m}\ar[d]&\mathcal{C}_{\{i=0\}}\cong \widehat{CF}(Y,\mathfrak{s})\ar[r]^{p_m}\ar[d]^{\widehat{f}}& \mathcal{C}_{\{i=0,j>m\}}\cong \mathcal{Q}_{\mathfrak{s},m} \ar[r]\ar[d]&0\\
0\ar[r]&\mathcal{C}_{\{i\geq 0, j=m\}}\ar[r]&\mathcal{C}_{\{\min(i,j-m)=0\}}\ar[r]& \mathcal{C}_{\{i=0,j>m\}}\ar[r]&0\\
}
\end{small}
\end{equation*}
commutes. By Theorem \ref{largesurgery}, when $|n|$ is sufficiently large, we can identify the sub-quotient complex $\mathcal{C}_{\{\min(i,j-m)=0\}}$ with $\widehat{CF}(Y_{-n}(K),\mathfrak{r}_m)$ and the map induced by $\widehat{f}$ on homology with the map $\widehat{F}_{-n,m}^\mathfrak{s}$. To prove the first part, assume that $k_{\mathfrak{s}}+m< \tau_{\mathfrak{s}}(Y,K)$. Let $\gamma\in\rho^{-1}_*(\mathcal{S})$. By the assumption, $\gamma$ is not in the image of $I_m$. Thus, by exactness, $P_m(\gamma)\neq 0$ and in particular, since the diagram commutes, $\widehat{F}_{-n,m}^\mathfrak{s}(\gamma)\neq 0$. 

On the other hand, the map $\widehat{f}$ factors through $p_{m-1}$. Thus, if $k_{\mathfrak{s}}+m>\tau_{\mathfrak{s}}(Y,K)$, then there exists an element $\gamma\in\rho_*^{-1}(\mathcal{S})$ such that $\gamma\in\im(I_{m-1})$. Thus, $P_{m-1}(\gamma)=0$ and by commutativity, $\widehat{F}_{-n,m}(\gamma)=0$ as well. 
\end{proof}

\section{Genus bounds}
Consider a negative definite 4-manifold $W$ with boundary a rational homology sphere $Y$. By removing a small ball from W, we obtain a 4-manifold which is a cobordism $W-B^4$ from $S^3$ to $Y$. In order to obtain our genus bounds, we need to ensure that the map on Floer homology induced by this cobordism is nontrivial. 

\begin{definition}
Let $W$ be a negative definition 4-manifold with boundary $Y$, a rational homology sphere. A $\s$-structure $\mathfrak{t}$ on $W$ is \emph{sharp} if $$c_1^2(\mathfrak{t})+b_2(W-B^4)=4d(Y,\mathfrak{t}|_Y).$$
\end{definition}

\begin{lemma}\label{lem:bound1} Let $W$ be a negative definite 4-manifold with boundary $Y$ a rational homology sphere. Then for any sharp $\s$-structure $\mathfrak{t}$ on $W$ the map 
\begin{equation*}
\widehat{F}_{W-B^4,\mathfrak{t}}:\widehat{HF}(S^3)\to \widehat{HF}(Y,\mathfrak{s})
\end{equation*}
is nontrivial. Moreover, $\widehat{HF}(S^3)$ is generated by a single element which maps to a non-torsion element of $HF^+(Y,\mathfrak{s})$ under the composition 
\begin{equation*}
\widehat{HF}(S^3)\to \widehat{HF}(Y,\mathfrak{s})\to HF^+(Y,\mathfrak{s}). 
\end{equation*}
\end{lemma}

\begin{proof}
Considering $X=W-B^4$ as a cobordism from $S^3$ to $Y$, we know that for each $\s$-structure on $X$, the map $$F^\infty_{X,\mathfrak{t}}:HF^\infty(S^3)\to HF^\infty(Y,\mathfrak{t}|_{Y})$$ is an isomorphism and the shift in degree can be calculated using the formula $$\frac{c^2_1(\mathfrak{t})-2\chi(X)-3\sigma(X)}{4}=\frac{c_1^2(\mathfrak{t})+b_2(X)}{4}.$$
The cobordism $X$ gives rise to a commutative square
\begin{equation*}
\xymatrix{
\widehat{HF}(S^3)\ar[r]^-{\widehat{F}_{X,\mathfrak{t}}}\ar[d]&\widehat{HF}(Y,\mathfrak{t}|_{Y})\ar[d]\\
HF^+(S^3)\ar[r]^-{F^+_{X,\mathfrak{t}}}&HF^+(Y,\mathfrak{t}|_{Y})
}
\end{equation*}
The top map $\widehat{F}_{X,\mathfrak{t}}$ is nontrivial provided that the element of $HF^+(S^3)$ in grading zero maps to a minimally graded element of $HF^+(Y,\mathfrak{s})$, that is, an element of grading $d(Y,\mathfrak{t}|_{Y})$. Since $\mathfrak{t}$ is a sharp $\s$-structure, we know that $d(Y,\mathfrak{t}|_{Y})$ is exactly equal to the grading shift.

\end{proof}

In addition, we need the following lemma which is Lemma 3.5 from \cite{OS-4ball}. While, the statement given in {\cite{OS-4ball}} claims a stronger bound, the bound given here is the correct one. The corrected statement and proof of Lemma \ref{os-bound} below is due to both the author and Matthew Hedden.

\begin{lemma}[{\cite[Lemma 3.5]{OS-4ball}}]\label{os-bound}
Let $N$ be the total space of a disk bundle with Euler number $-n<0$ over an oriented two manifold $\Sigma$ of genus $g$. If $n\geq 2g-1$ then the map 
\begin{equation*}
\widehat{F}_{N-B,\mathfrak{s}}:\widehat{HF}(S^3)\to\widehat{HF}(\partial N, \mathfrak{t}|_{\partial N})
\end{equation*}
is trivial whenever
\begin{equation*}
\langle c_1(\mathfrak{t}),[\Sigma]\rangle+[\Sigma]\cdot[\Sigma]>2g(\Sigma).
\end{equation*}
\end{lemma}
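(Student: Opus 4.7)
The plan is to invoke the adjunction inequality for Heegaard Floer cobordism maps, applied to the zero section $S\subset N$. The relevant input, due to Ozsv\'ath and Szab\'o, says that if a smooth cobordism $W\colon Y_1\to Y_2$ contains a closed, oriented, embedded surface $\Sigma$ of genus $g(\Sigma)>0$ with $[\Sigma]^2\geq 0$, and if $\mathfrak{t}\in\S(W)$ satisfies
\[
\langle c_1(\mathfrak{t}),[\Sigma]\rangle + [\Sigma]\cdot[\Sigma] > 2g(\Sigma)-2,
\]
then the induced map $\widehat{F}_{W,\mathfrak{t}}$ is trivial.

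First I would set up the geometry: since $N$ is a disk bundle with Euler number $n>0$ over $S$, the zero section realizes $S$ as a closed, oriented, embedded surface with self-intersection $[S]\cdot[S]=n>0$. Choose the deleted ball $B$ disjoint from this zero section, which is possible because $S$ has codimension two in $N$; then $S$ persists as a closed embedded surface of positive genus inside the cobordism $N-B\colon S^3\to\partial N$, with its self-intersection unchanged and still positive.

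With $W=N-B$, $\Sigma=S$, and $\mathfrak{t}=\mathfrak{s}$ (or, more precisely, its restriction to $N-B$), the numerical hypothesis of the lemma is precisely the hypothesis of the adjunction inequality, so the conclusion that $\widehat{F}_{N-B,\mathfrak{s}}$ vanishes follows immediately. The only substantive ingredient is the adjunction inequality itself, which is already established in the Heegaard Floer literature via the blow-up formula together with the surgery exact triangles. Since it is available in exactly the form needed, the main obstacle to proving this lemma lies outside of it, in that prior result; the present argument is essentially a direct invocation after arranging $S$ to sit as a closed surface in the cobordism.
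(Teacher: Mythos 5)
The paper does not actually prove this lemma: it is imported verbatim from \cite[Lemma 3.5]{OS-4ball}, whose proof is exactly the argument you outline — the adjunction inequality for cobordism maps applied to the zero section $S$ sitting inside the cobordism $N-B\colon S^3\to\partial N$, which in turn rests on the blow-up formula and factoring the cobordism through the boundary of a tubular neighborhood of a square-zero surface, where $\widehat{HF}$ vanishes in the offending $\s$-structures. Your proposal is therefore correct and takes essentially the same route as the cited source; the only caveat is that the ``off-the-shelf'' cobordism adjunction statement you invoke is itself established by that blow-up/factoring argument, so citing \cite[Lemma 3.5]{OS-4ball} directly, as the paper does, is the most economical formulation.
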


\begin{proof}
The disk bundle $N$ has a handle decomposition with one 0-handle, $2g$ 1-handles and a single 2-handle. Specifically, the 0-handle and 1-handles give a handle decomposition for $\natural_{2g} (S^1\times B^3)$ and the 2-handle is added with $(-n)$-framing along $B_{g}$ the ``Borromean knot". Thus, $N-B^4=W_1\cup W_2$, where $W_1=\natural_{2g}(S^1\times B^3)-B^4$ and $W_2$ is the 2-handle addition along $B_g$. 

Let $\mathfrak{t}|_{W_i}=\mathfrak{t}_i$. Then $\mathfrak{t}=\mathfrak{t}_1\#\mathfrak{t}_2$ and the map $\widehat{F}_{N-B^4,\mathfrak{t}}$ factors as $$\widehat{HF}(S^3)\xrightarrow{\widehat{G}_{W_1,\mathfrak{t}_1}} \widehat{HF}(\#_{2g}(S^1\times S^2))\xrightarrow{\widehat{G}_{W_2,\mathfrak{t}_2}} \widehat{HF}(\partial N,\mathfrak{t}|_{\partial N}).$$

In Section 9 of \cite{OS-knotinvariants} Ozsv\'ath and Szab\'o compute the knot Floer homology of $B_g$. There they show that in Alexander grading $j$, $$\widehat{HFK}(\#_{2g}S^1\times S^2, B_g, j)=\Lambda^{g+j}H^1(\Sigma).$$ They also show that $$CFK^\infty(\#_{2g}S^1\times S^2, B_g)=\Lambda^{*}H^1(\Sigma)\otimes \F_2[U,U^{-1}]$$ and has no differentials. 

The map $$\widehat{G}_{W_1,\mathfrak{t}_1}:\widehat{HF}(S^3)\to \widehat{HF}(\#_{2g}S^1\times S^2)$$ increases the homological grading by $g$. Therefore, the generator of $\widehat{HF}(S^3)$ is sent to the top graded element of $\widehat{HF}(\#_{2g}S^1\times S^2)$.

Now we focus on the second map $$\widehat{G}_{W_2,\mathfrak{t}_2}:\widehat{HF}(\#_{2g}(S^1\times S^2))\to\widehat{HF}(\partial N, \mathfrak{t}|_{\partial N})$$ induced by the 2-handle addition. Let $\langle c_1(\mathfrak{t}),[\Sigma]\rangle +[\Sigma]^2=2k$. The large surgery theorem implies that when $n\geq 2g(\Sigma)-1$, the sub-quotient complex $\mathcal{C}\{\min(0,j-k)=0\}$ can be identified with $\widehat{CF}(\partial N, \mathfrak{t}|_{\partial N})$. In addition, the map  $\widehat{G}_{W_2,\mathfrak{t}_2}$ can be computed from looking at from $CFK^\infty(\#_{2g}S^1\times S^2, B_g)$. In particular, this map is trivial whenever $k>g(\Sigma)$. 

On the other hand, if $\langle c_1(\mathfrak{t}),[\Sigma]\rangle+[\Sigma]\cdot[\Sigma]=2g(\Sigma)$ the map $\widehat{G}_{W_2,\mathfrak{t}_2}$ is non-trivial on the top graded element of $\widehat{HF}(\#_{2g}S^1\times S^2)$. Therefore, the map $\widehat{F}_{N-B,\mathfrak{t}}$ will be nontrivial as well. 

\end{proof}

Now consider a knot $K$ in $Y=\partial W$. Adding a 2-handle to $W$ along $K$ with $(-n)$-framing, we form a new 4-manifold $W_{-n}(K)$ which decomposes as $W\cup_Y X_{-n}(K)$ where $X_{-n}(K)$ is the 2-handle cobordism from $Y$ to $Y_{-n}(K)$. If $\Sigma$ is a surface in W with boundary $K$, we can form a closed surface $\widehat\Sigma=\Sigma \cup C$ in $H_2(W_{-n}(K))$. Consider the Mayer-Vietoris sequence, 
\begin{equation*}
0\to H_2(W)\oplus H_2(X_{-n})\xrightarrow{i_*} H_2(W_{-n}(K))\xrightarrow{j_*} H_1(Y)\to\ldots.
\end{equation*}
 While $[\widehat\Sigma]$ is not in the kernel of $j$, $q[\widehat\Sigma]$ is in the kernel of $j$. Thus, $q[\widehat\Sigma]$ splits as a class in $H_2(W)\oplus H_2(X_{-n}(K))$. We can think of this splitting geometrically as
\begin{equation}\label{classes}
q[\widehat\Sigma]=i_*([q\Sigma\cup -F]\oplus[F_{qC}])
\end{equation}		
where $F$ is a rational Seifert surface for $K$.

\begin{theorem}
Let $W$ be a negative definite 4-manifold and $K$ a knot in $\partial W=Y$. Let $F$ be a $q$-Seifert surface for $K$ and $\mathfrak{t}$ a sharp $\s$-structure on $W$. Then for any surface $\Sigma$ such that $\partial\Sigma=K$, 
\begin{equation*}
\frac{1}{q}\langle c_1(\mathfrak{t}),[q\Sigma\cup -F]\rangle+\frac{1}{q^2}[q\Sigma\cup -F]^2+2\tau_\mathfrak{s}(Y,K)\leq 2g(\Sigma)
\end{equation*}
where $\mathfrak{s}=\mathfrak{t}|_{Y}$.
\end{theorem}	

\begin{proof}

Consider our decomposition of the 4-manifold $W_{-n}(K)$ as 
\begin{equation*}
W_{-n}(K)=W\cup_Y\ X_{-n}(K).
\end{equation*}
If we remove a small ball $B$ from the interior of $W_{-n}(K)$ we can view $W_{-n}(K)-B$ as a cobordism from $S^3$ to $Y_{-n}(K)$ which is the composition of the cobordisms $W-B$ from $S^3$ to $Y$ and $X_{-n}(K)$ from $Y$ to $Y_{-n}(K)$.  

Since $\mathfrak{t}$ is sharp, by Lemma \ref{lem:bound1}, the map 
\begin{equation*}
F_{W-B,\mathfrak{t}}:\widehat{HF}(S^3)\to \widehat{HF}(Y,\mathfrak{s})
\end{equation*}
takes the generator of $\widehat{HF}(S^3)$ to an element $\gamma\in \rho_*^{-1}(\mathcal{S})$. 

Fix $m<\tau_{\mathfrak{s}}(Y,K)-k_\mathfrak{s}$ and choose $|n|$ large enough that Proposition \ref{4-dimtau} and Lemma \ref{os-bound} hold. Let $\mathfrak{t}_m$ be the $\s$-structure on $X_{-n}(K)$ such that $\mathfrak{t}|_Y=\mathfrak{s}$ and 
\begin{equation*}
\langle c_1(\mathfrak{t}_m),[F_{qC}]\rangle +[F_{qC}]^2=2q(k_\mathfrak{s}+m).
\end{equation*}
Since $2q(k_\mathfrak{s}+m)\leq 2q\tau_\mathfrak{s}(Y,K)$, the map $F_{n,m}^\mathfrak{s}:\widehat{HF}(Y,\mathfrak{s})\to \widehat{HF}(Y_{-n}(K),\mathfrak{r}_m)$ applied to $\gamma$ is nontrivial and therefore, the composition 
\begin{equation*}
F_{n,m}^\mathfrak{s}\circ F_{W-B,\mathfrak{t}}: \widehat{HF}(S^3) \to \widehat{HF}(Y_{-n}(K),\mathfrak{r}_m)
\end{equation*}
is nontrivial as well.

On the other hand, since $W_{-n}(K)-B$ is a 4-manifold, we are free to factor this cobordism through any intervening 3-manifolds. Decompose $W_{-n}(K)$ as $W_1\cup W_2$ where $W_1=\nu(\widehat{\Sigma})$ is the tubular neighborhood of $\widehat{\Sigma}$ and $W_2$ is the complement of this tubular neighborhood.

Since we have chosen $n$ and $m$ so that the composition $F_{n,m}^\mathfrak{s}\circ F_{W-B,\mathfrak{t}}$ is nontrivial, the map $$\widehat{HF}(S^3)\to\widehat{HF}(\partial\nu(\widehat{\Sigma}),\hat{\mathfrak{t}}|_{\partial\nu(\widehat{\Sigma})})$$ where $\hat{\mathfrak{t}}=\mathfrak{t}\#\mathfrak{t}_m$ is nontrivial as well. Therefore by Lemma \ref{os-bound} we have 
\begin{equation*}
\langle c_1(\hat{\mathfrak{t}}),[\widehat{\Sigma}]\rangle +[\widehat{\Sigma}]^2\leq 2g(\widehat{\Sigma}).
\end{equation*}

Applying equation (\ref{classes}) to rewrite the left-hand side shows that 
\begin{align*}
\langle c_1(\hat{\mathfrak{t}}),[\widehat{\Sigma}]\rangle +[\widehat{\Sigma}]^2 
&=\langle c_1(\hat{\mathfrak{t}}),i_*([q\Sigma\cup -F]\oplus[F_{qC}])\rangle +\frac{1}{q^2}  ([q\Sigma\cup -F]\oplus[F_{qC}])^2\\
&=\frac{1}{q}\langle c_1(\mathfrak{t}),[q\Sigma\cup -F]\rangle+\frac{1}{q^2}[q\Sigma\cup -F]^2\\&\quad\quad\quad\quad+\frac{1}{q}\langle c_1(\mathfrak{t}_{m}^{\mathfrak{s}}),[F_{qC}]\rangle +\frac{1}{q^2}[F_{qC}]^2\\
&=\frac{1}{q}\langle c_1(\mathfrak{t}),[q\Sigma\cup-F]\rangle+\frac{1}{q^2}[q\Sigma\cup -F]^2 +2(k_{\mathfrak{s}}+m).
\end{align*}
Thus, 
\begin{equation}
\frac{1}{q}\langle c_1(\mathfrak{t}),[q\Sigma\cup- F]\rangle+\frac{1}{q^2}[q\Sigma\cup -F]^2+2(k_{\mathfrak{s}}+m)\leq 2g(\widehat{\Sigma})=2g(\Sigma).
\label{bound1}
\end{equation}
Since equation (\ref{bound1}) holds for all integers $m<\tau_\mathfrak{s}(Y,K)-k_\mathfrak{s}$ we may take $k_{\mathfrak{s}}+m= \tau_{\mathfrak{s}}(Y,K)-1$.
Making this substitution and simplifying we obtain,   
\begin{equation*}
\frac{1}{q}\langle c_1(\mathfrak{t}),[q\Sigma\cup -F]\rangle+\frac{1}{q^2}[q\Sigma\cup -F]^2+2\tau_\mathfrak{s}(Y,K)-2\leq 2g(\Sigma)
\label{longequation} 
\end{equation*}

Finally, we can improve this bound slightly by exploiting the additivity of the terms. This final part of the argument is due to the author and Matthew Hedden. Let $W_d=\natural_d W$. Then $W_d$ is a negative definite 4-manifold with boundary $\#_dY$. In addition the knot $\#_dK\subset \#_d Y$, bounds the surface $\natural_d\Sigma$ and has a $q$-Seifert surface $\natural_d F$. Therefore, we can calculate that 
\begin{align*}
\langle c_1(\#_d\mathfrak{t}),[q\natural_d \Sigma\cup-\natural_d F]\rangle &= d\langle c_1(\mathfrak{t}),[q\Sigma\cup -F]\rangle\\
\frac{1}{q^2}[q\natural_d\Sigma\cup\natural_d F]^2&=\frac{d}{q^2}[q\Sigma\cup -F]^2\\
\tau_{\#_d\mathfrak{s}}(\#_d Y,\#_d K)&=d\tau_{\mathfrak{s}}(Y,K)\\
g(\natural_d\Sigma)&=dg(\Sigma)
\end{align*}
Applying Equation \ref{longequation} to $\#_dK$ in $X_d$, and dividing through by $d$, we get that 
$$\frac{1}{q}\langle c_1(\mathfrak{t}),[q\Sigma\cup -F]\rangle+\frac{1}{q^2}[q\Sigma\cup -F]^2+2\tau_\mathfrak{s}(Y,K)\leq 2g(\Sigma)+\frac{2}{d}$$
holds for any choice of $d$. Thus,
$$\frac{1}{q}\langle c_1(\mathfrak{t}),[q\Sigma\cup- F]\rangle+\frac{1}{q^2}[q\Sigma\cup -F]^2+2\tau_\mathfrak{s}(Y,K)\leq 2g(\Sigma).$$
\end{proof}

\begin{corollary}\label{corollary}
Let $W$ be a rational ball with $\partial W=Y$, and $\Sigma$ a properly embedded surface in $W$ with $\partial\Sigma=K$. Then for each $\s$-structure $\mathfrak{s}$ on $Y$ that extends over $W$
\begin{equation*}
|\tau_\mathfrak{s}(Y,K)|\leq g(\Sigma).
\end{equation*}
\end{corollary}
\begin{proof}
Applying Theorem \ref{maintheorem} we see that $\tau_\mathfrak{s}(Y,K)\leq g(\Sigma)$. On the other hand, reversing the orientation of $W$ implies that $\tau_{\mathfrak{s}}(-Y,K)\leq g(\Sigma)$. The result now follows by applying Lemma \ref{properties} (1). \end{proof}

\section{Examples}
\subsection{A slice knot and a non-slice knot in $L(4,1)$}
\begin{figure}
\begin{center}
\includegraphics[scale=.2]{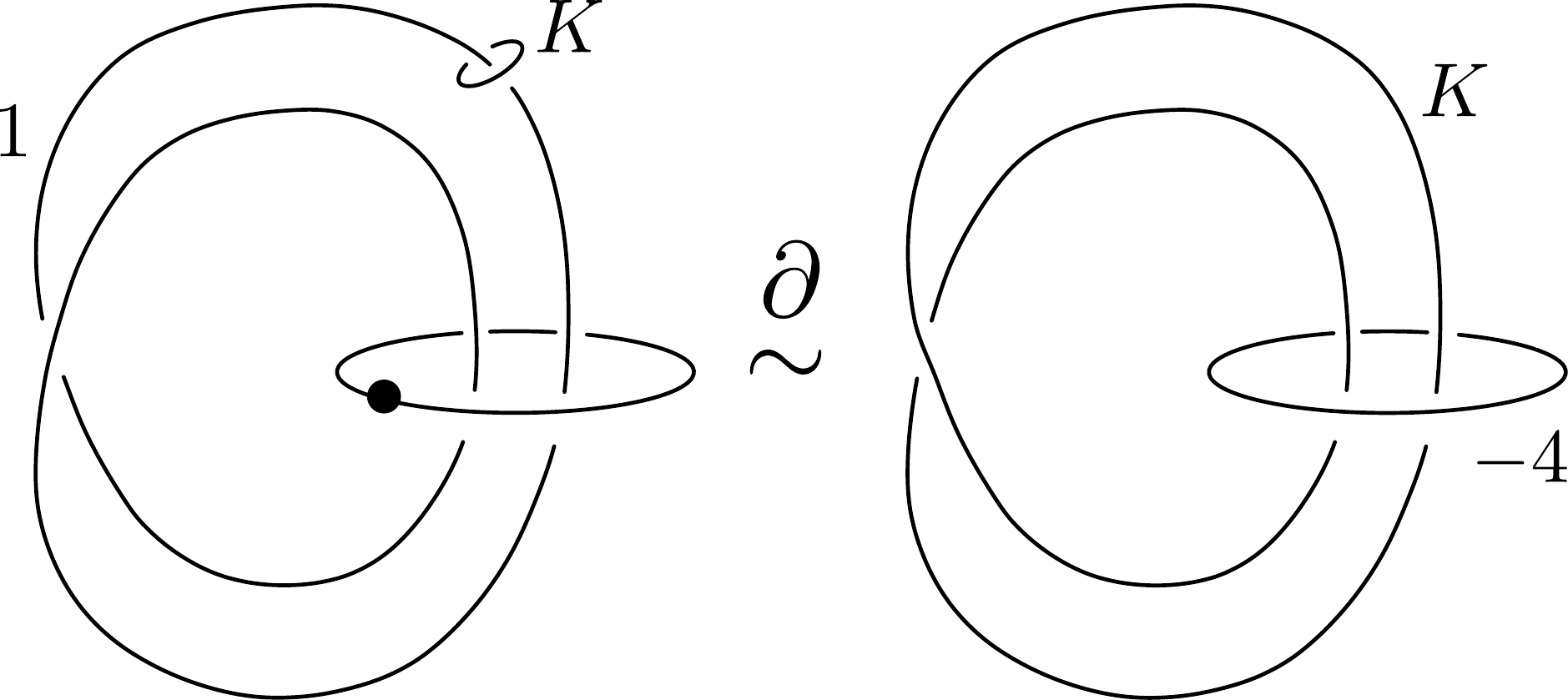}
\end{center}
\caption{The knot $K$ is slice in a rational ball with boundary $L(4,1)$.}
\label{QH4ball}
\end{figure} 
The lens space $L(4,1)$ bounds a rational homology 4-ball $W$ as shown in the Kirby diagram in Figure \ref{QH4ball} and the map 
\begin{align*}
H_1(L(4,1))& \to H_1(W)\cong \Z_2
\end{align*}
is reduction modulo two. Thus, any knot of order two in $H_1(L(4,1))$ will be null-homologous in $W$ and applying Corollary \ref{corollary} to any such knot gives us a bound on the genus of a surface bounded by the knot. 

Let $K$ be the knot of order two in $L(4,1)$ shown in Figure \ref{QH4ball}. This knot clearly bounds a disk in $W$. Therefore, if we calculate the $\tau$-invariants associated to $K$, there must be at least two that vanish corresponding to the $\s$-structures that extend over $W$. 
 
A genus one Heegaard diagram for $K$ is shown on the left in Figure \ref{HD}. From this Heegaard diagram we can calculate the Alexander grading of each element by finding a periodic domain for a rational Seifert surface $F$ and using the $c_1$-evaluation formula from Proposition \ref{Hedden-Levine}. The Alexander gradings are listed in the table in Figure \ref{AlexandergradingsK}.

\begin{figure}
\begin{center}
\includegraphics[scale=.3]{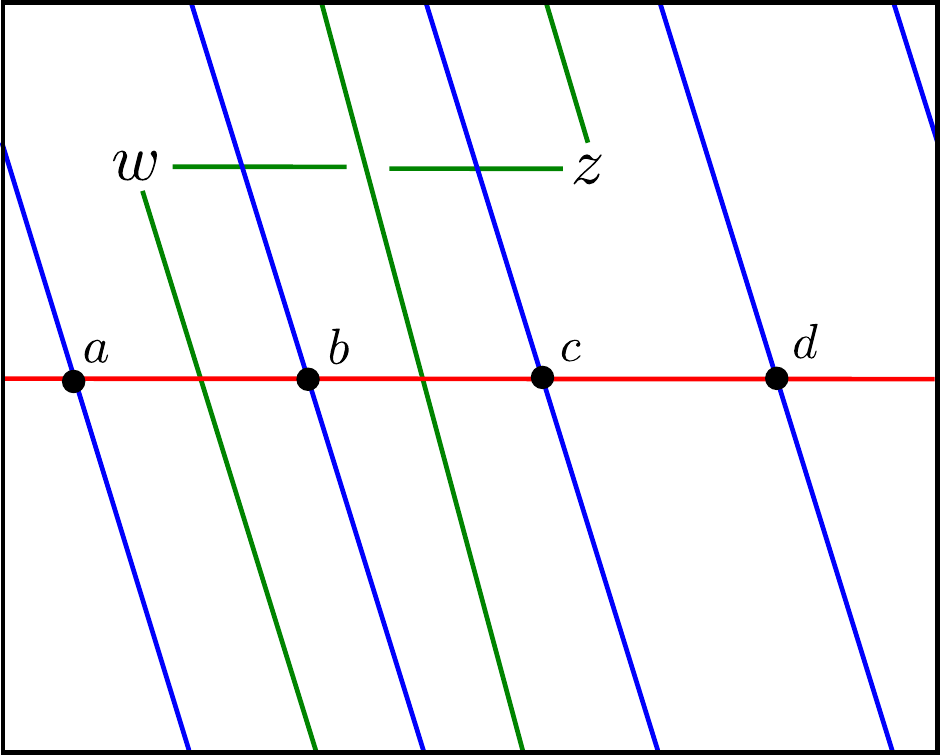}\hspace{.5cm}
\includegraphics[scale=.3]{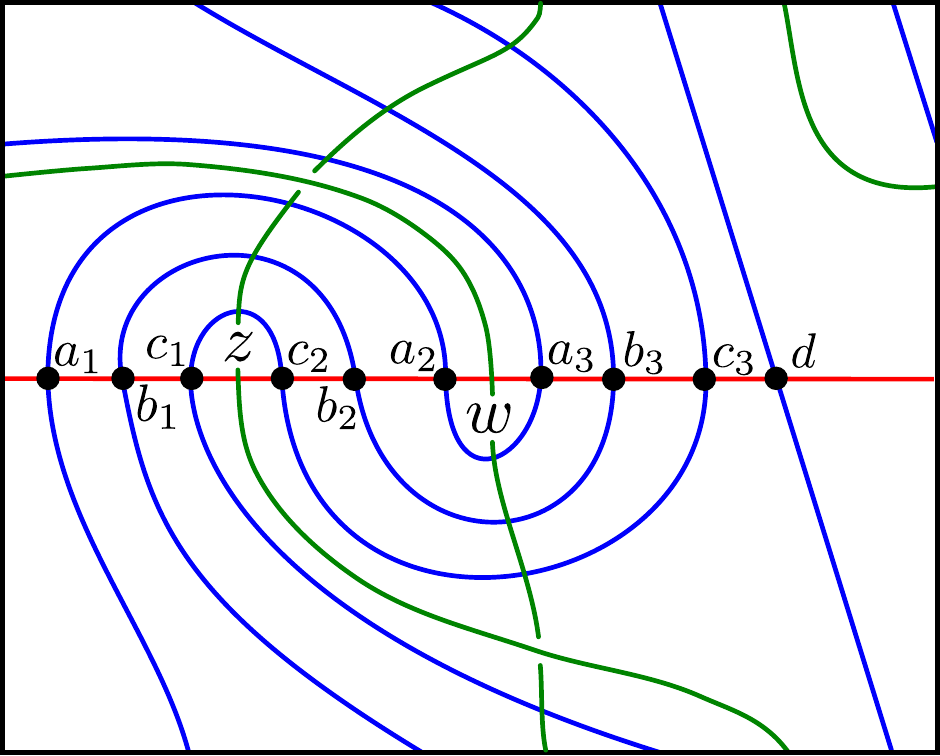}
\caption{Heegaard diagrams for $K$ and $J$ in $L(4,1)$ with undercrossing pushed slightly into the $U_{\b\alpha}$ handlebody.}
\label{HD}
\end{center}
\end{figure}

Each intersection point in the diagram represents a distinct $\s$-structure, so we denote $\mathfrak{s}_w(a)$ by $\mathfrak{s}_a$ and denote the rest of the $\s$-structures accordingly. Since there are no holomorphic disks in this diagram, the Alexander grading of each element is equal to $\tau_{\mathfrak{s}_w(x)}(L(4,1), K)$. Thus, $\mathfrak{s}_b$ and $\mathfrak{s}_d$ extend over $W$. 

\begin{table}
\begin{center}  
\setlength{\extrarowheight}{2 pt}
\begin{tabular}{|c|c|c|c|c|}
\hline
$x$& $a$&$b$&$c$&$d$ \\[0.5 ex]
\hline 
$A(x)$&$-\frac{1}{2}$&0& $\frac{1}{2}$&0\\[1 ex]
\hline
\end{tabular}
\end{center}
\caption{Alexander gradings for generators of $\widehat{CFK}(Y,K)$.}
\label{AlexandergradingsK}
\end{table}

Now consider the knot $J$ given by the Heegaard diagram on the right in Figure \ref{HD}. This knot is also order two in $H_1(L(4,1))$ and Table \ref{J} shows the Alexander grading of each generator. 

Applying Equation (\ref{3genus}), the rational 3-genus of the knot $J$ is $||J||=1$. This implies that if $F$ is a connected rational Seifert surface for $J$, then $-\chi(F)\geq 4$. Since a connected Seifert surface will have either one or two boundary components, this means that we must have $g(F)\geq 2$.

\begin{table}
\begin{center}
\setlength{\extrarowheight}{2 pt}
\begin{tabular}{|c|c|c|c|c|c|c|c|c|c|c|}
\hline
$x$&$a_1$&$a_2$&$a_3$&$b_1$&$b_2$&$b_3$&$c_1$&$c_2$&$c_3$&$d$\\[0.5 ex]
\hline
$A(x)$&$\frac{1}{2}$&$-\frac{1}{2}$&$-\frac{3}{2}$& $1$& $0$ & $-1$ &$\frac{3}{2}$& $\frac{1}{2}$&$-\frac{1}{2}$ & 0 \\[0.5 ex]
\hline
\end{tabular}
\end{center} 
\caption{Alexander gradings for generators of $\widehat{CFK}(Y,J)$.}
\label{J}
\end{table}
We can also determine the structure of $CFK^\infty(L(4,1),J,\mathfrak{s})$ in each $\s$-structure from the Heegaard diagram. We have the following $\s$-equivalence classes for the generators:
\begin{align*}
\mathfrak{s}_a&=\mathfrak{s}_w(a_1)=\mathfrak{s}_w(a_2)=\mathfrak{s}_w(a_3)\\
\mathfrak{s}_b&=\mathfrak{s}_w(b_1)=\mathfrak{s}_w(b_2)=\mathfrak{s}_w(b_3);\\
\mathfrak{s}_c&=\mathfrak{s}_w(c_1)=\mathfrak{s}_w(c_2)=\mathfrak{s}_w(c_3)\\
\mathfrak{s}_d&=\mathfrak{s}_w(d).\\
\end{align*}
Figure \ref{CFK} shows $CFK^\infty(L(4,1),J,\mathfrak{s}_b)$. Thus, the $\tau$-invariants are given by,
\begin{figure}[b]
\begin{tikzpicture}
\draw[<->] (0,2)--(0,-2);
\draw[<->] (2,0)--(-2,0);
\foreach \x in {-2,...,1}
	\filldraw[fill=black](\x,\x) circle (2pt);
\foreach \x in {-2,...,1}
	\draw[->, thick](\x,\x+1)--(\x,\x);
\foreach \x in {-1,...,2}
	\filldraw[fill=black](\x-1,\x) circle (2pt);
\foreach \x in {-2,...,1}
	\draw[->, thick](\x+1,\x)--(\x,\x);
\foreach \x in {-2,...,1}
	\filldraw[fill=black](\x+1,\x) circle (2pt);
\end{tikzpicture}
\caption{$CFK^\infty(L(4,1),J,\mathfrak{s}_b).$}
\label{CFK}
\end{figure}
\begin{align*}
\tau_{\mathfrak{s}_a}(L(4,1),J)&=-\frac{3}{2}; &\tau_{\mathfrak{s}_b}(L(4,1),J)&=-1;\\
\tau_{\mathfrak{s}_c}(L(4,1),J)&=-\frac{1}{2}; &\tau_{\mathfrak{s}_d}(L(4,1),J)&=0.\\
\end{align*}
By our previous calculation for $\tau_{\mathfrak{s}}(L(4,1),K)$, $\mathfrak{s}_b$ and $\mathfrak{s}_d$ both extend over $W$. Thus, since $\tau_{\mathfrak{s}_b}(L(4,1),J)=1$, $J$ is not slice. Note that the bound we obtain is lower than the $3$-dimensional genus bound coming from Equation (\ref{3genus}). In addition, by a remark of Celoria \cite[Remark 17]{Celoria}, since $\tau_{\mathfrak{s}_d}(L(4,1),J)=0$, we also know that $J$ cannot be rationally concordant to a connected sum of knots $K\# K'$ for $K'$ in in $S^3$ with $\tau(K')=1$. 

\subsection{Calculations in non-$L$-space examples}

The previous example relied on the fact that lens spaces have nice genus one Heegaard diagrams. To compute $\tau$-invariants for knots in non-$L$-space examples, we will apply the following theorem of Truong. 

Let $K$ be a knot in the 3-sphere and consider the 3-manifold obtained by $n$-surgery along $K$. We are interested in the knot $u_p$ that is the $(p,1)$ cable of the meridian of $K$ viewed as a knot in $S^3_n(K)$. 

\begin{theorem}[\cite{Truong}]\label{Truong}
Let $K\subset S^3$ be a knot and fix $m,p\in\Z$.Then there exists $N=N(m,p)>0$ such that for all $n>N$, the $\Z$-filtration on $\widehat{CF}(S^3_n(K),\mathfrak{s}_m)$ induced by $\mu_p$ is isomorphic to the filtered chain homotopy type of the $(p+1)$-step filtration $\mathcal{F}$ on $\mathcal{C}\{\max(i,j-m)=0\}$ given by: 
\begin{equation*}
0\subset\mathcal{C}\{i<-p+1,j=m\}\subset \ldots \subset \mathcal{C}\{i<0,j=m\}\subset \mathcal{C}\{\max(i,j-m)=0\}.
\end{equation*}
\end{theorem}

\begin{figure}
\includegraphics[scale=.7]{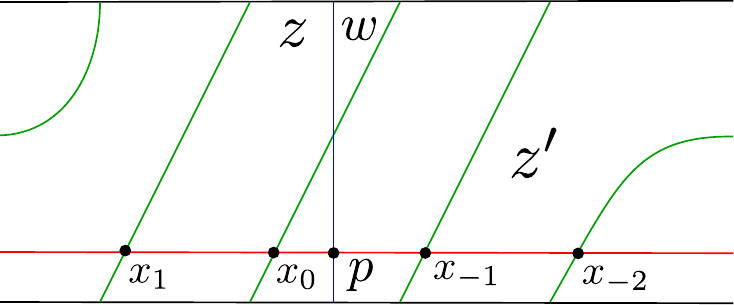}
\caption{}
\label{WindingRegion4}
\end{figure}

Theorem \ref{Truong} is a refinement of the large surgery theorem. Given a Heegaard diagram for $(S^3,K)$ a diagram for $(S^3_n(K),\mu_p)$ is obtained by adding a third basepoint $z'$, $p$-regions to the right of $\beta_g=\mu$. Figure \ref{WindingRegion4} shows a the winding region for $(S^3_4(K), \mu_2)$. 

For a fixed $p$, we further refine Theorem \ref{Truong} in the following way:  
\begin{proposition}\label{refinement}
Let $K$ be a knot in $S^3$ and fix $p\in \Z$. If $n\geq 2g(K)+p-1$, then the $\Z$-filtration on $\widehat{CF}(S^3_n(K),\mathfrak{s}_m)$ induced by $\mu_p$ is isomorphic to the filtered chain homotopy type of the $(p+1)$-step filtration $\mathcal{F}$ on $\mathcal{C}\{\max(i,j-m)=0\}$ described above for each $\lfloor\frac{-n}{2}\rfloor\leq m\leq  \lfloor\frac{n}{2}\rfloor -1$. 
\end{proposition}
\begin{proof} This follows from the argument in Theorem 4.3 of \cite{Hedden-Whitehead} and its refinement in Theorem 4.2 of \cite{Hedden-Kim-Livingston}. There Hedden recalls that for a knot $K$ in $S^3$ and an integer $n\geq 2g(K)$ we can find a Heegaard diagram that has at least one $\s$-equivalence class such that every generator in that class is supported in the winding region. Employing the technique of ``moving the basepoint" we can change the particular $\s$-structure represented by this $\s$-equivalence class. In the present setting,``moving the basepoint" means moving the points $w$ and $z'$ in the diagram for $(S^3_n(K),\mu_k)$. Since $w$ and $z'$ must remain $p$ regions apart, we can only move the basepoint $n-p+1$ times. If $n=2g(K)$, this means we have only represented $n-p+1$ distinct $\s$-structures. Increasing $n$ to at least $2g(K)+p-1$, we have a diagram in which at least $p$ distinct $\s$-equivalence classes are supported only in the winding region. Moving the basepoint through this diagram, we can ensure that each of the $2g(K)+p-1$ $\s$-structures is represented. 
\end{proof}

Proposition \ref{refinement} allows us to recover the \emph{relative} Alexander filtration induced by $\mu_p$ in $S^3_n(K)$ for sufficiently large $n$. To recover the absolute Alexander grading, we examine generators supported in the winding region. For our example, we focus on the case of $\mu_2$. However, similar calculations can be used to recover the Alexander grading for any $\mu_p$. 

\begin{lemma}\label{Alexgradings} 
Let $[{\bf y}, i ,j]$ be a generator of  the sub-quotient complex $\mathcal{C}\{\max(i,j-m)=0\}$  in $CFK^\infty(S^3,K)$. Viewing $[{\bf y}, i ,j]$ as an element of $\widehat{CFK}(S^3_n(K),\mu_2)$, if $n$ is sufficiently large, then its Alexander grading is given by $$2dA_{\mu_2}([{\bf y}, i,j])= \begin{cases} 2(1-m) & \text{ if }i=0\\ -2(m+1)& \text { if } i=-1 \\ -2(m+3) & \text{ if } i<-1 \end{cases}$$
where $d$ is the order of $[\mu_2]$ in $H_1(S^3_n(K))$, which is $n$ when $n$ is odd and $\frac{n}{2}$ when $n$ is even.
\end{lemma}

\begin{proof}
Let $(\Sigma,\b\alpha,\b\beta_0\cup\mu,w,z)$ be a Heegaard diagram for $(S^3,K)$ adapted to a Seifert surface $F$ for $K$ so that every intersection point ${\bf y}$ in the diagram is of the form $({\bf y}_0,p)$ where $p=\alpha_g\cap \mu$ and $\mathcal{P}_0$ is a relative periodic domain representing $F$ with $\partial\mathcal{P}_0=\lambda_0+\alpha_g$. 
 By replacing $\mu$ with $\lambda_n=\lambda_0+n\mu$, we have a related Heegaard diagram $(\Sigma,\b\alpha,\b\beta_0\cup\lambda_n,w,z')$ for $(S^3_n(K),\mu_2)$.

In the coordinates for the diagram for $S^3_n(K)$, we have that $\mu_2=2\mu-\alpha_g$ and $\lambda_0=\lambda_4-4\mu$. Starting with $\partial\mathcal{P}_0$ and substituting gives us a relative periodic domain $\mathcal{P}_{\mu_2^r}$ with $$\partial \mathcal{P}_{\mu_2^r}=\lambda_4-2\mu_2-\alpha_g.$$
We have chosen our notation to reflect the fact that $\mu_2$ appears in the equation for $\partial\mathcal{P}_{\mu_2^r}$ with a minus sign, which means that $\partial\mathcal{P}_{\mu_2^r}$ is naturally a periodic domain representing a rational Seifert surface for the reverse of $\mu_2$. The relative periodic domain we are interested in is $\mathcal{P}_{\mu_2}$ with boundary $\mu_2$. This domain is given by the same 2-chain as $\mathcal{P}_{\mu_2^r}$ except that the signs of all of the multiplicities are reversed. 

Now, we enumerate intersection points $\lambda_n\cap\alpha_g$ to the left of $p$ starting with $x_0$ and increasing to the left. Enumerate points to the right of $p$ starting with $x_{-1}$ and decreasing to the right as in Figure \ref{WindingRegion4}. Given the above, we can compute the Alexander gradings of points $({\bf y},x_k)$ in terms of the Alexander gradings of points ${\bf y}$,
\begin{align*}
 2dA({\bf y},x_k)&=\hat\chi(\mathcal{P}_{\mu_2})+2n_{({\bf y}_0,x_k)}(\mathcal{P}_{\mu_2})-n_{w}(\mathcal{P}_{\mu_2})-n_z(\mathcal{P}_{\mu_2})\\
 &=\hat\chi(-\mathcal{P}_{\mu_2^r})+2n_{{\bf y}_0}(-\mathcal{P}_{\mu_2^r})+2n_{x_k}(\mathcal{P}_{\mu_2})-n_{w}(\mathcal{P}_{\mu_2})-n_z(\mathcal{P}_{\mu_2})\\
   &=-\hat\chi(\mathcal{P}_{0})-2n_{{\bf y}_0}(\mathcal{P}_{0})+2n_{x_k}(\mathcal{P}_{\mu_2})-n_{w}(\mathcal{P}_{\mu_2})-n_z(\mathcal{P}_{\mu_2})\\
   &=-2A({\bf y})+2n_{x_k}(\mathcal{P}_{\mu_2})-n_{w}(\mathcal{P}_{\mu_2})-n_z(\mathcal{P}_{\mu_2}).
 \end{align*}
For points $x_k$ to the left of $p$ we have:
\begin{itemize}
\item $2n_{x_k}=2(\ell-k-1)$
\item $n_w=\ell-1$
\item $n_z=\ell-3$
\end{itemize}
where $x_\ell$ is the intersection point furthest to the left of $p$. 

For points $x_{-k}$ to the right of $p$ with $i>1$ we have:
\begin{itemize}
\item $2n_{x_{-k}}=2(-r+k-2)$
\item $n_w=-r+2$
\item $n_z=-r$
\end{itemize}
where $x_r$ is the intersection point furthest to the right of $p$. In addition for $x_{-1}$ we have $2n_{x_{-1}}(\mathcal{P})-n_w(\mathcal{P})-n_z(\mathcal{P})=0$. 

It is worth noting that the Alexander grading of each generator is independent of $\ell$ and $r$. The Alexander grading only depends on how far to the left or right of $p$ the final coordinate lies.  

Thus we have, $$2dA({\bf y}_0,x_k)=-2A({\bf y})+ \begin{cases} 2-2k & k\geq 0 \\ 0  & k=-1\\ -6-2k &i<-k. \end{cases}$$

Finally, if $[{\bf y}, i,j]\in\mathcal{C}\{\max(i,j-m)=0\}$, and $n$ is large, we have that $[{\bf y}, i,j]=\Phi_m({\bf y}_0,x_k)$ for some $k$ where $$\Phi_m: \widehat{CF}(S^3_n(K),\mathfrak{s}_m)\to \mathcal{C}\{\max(i,j-m)=0\}$$ is the chain map given by, $$\Phi_{m}[\mathbf{x}]=\sum_{y\in\intpt}\sum_{\substack{\psi\in\pi_2(\mathbf{x},\b{\theta},\mathbf{y}) \\ n_z(\psi)-n_w(\psi)=\mathcal{F}(\mathbf{y})-m \\ \mu(\psi)=0}} \#\mathcal{M}(\psi)[\mathbf{y},-n_w(\psi),m-n_z(\psi)].$$

 If $i=0$, then $j=m-n_z(\psi)=m-k$. Thus, $[{\bf y}, 0,m-k]=\Phi_m({\bf y}_0,x_k)$. So $$2dA_{\mu_2}([{\bf y}, 0,m-k])=2(k-m)+ 2-2k=2-2m.$$ On the other hand, if $i<0$ then $i=-n_w(\phi)=k$. Thus, $[{\bf y}, i,m]=\Phi_m({\bf y}_0,x_k)$. So $$2dA_{\mu_2}([{\bf y}, i,m])=2(i-m)+\begin{cases} 0 & i=-1\\ -6-2i & i<-1.\end{cases}$$ 
\end{proof}

Given $CKF^\infty(S^3,K)$ for any knot $K$ in the 3-sphere, Proposition \ref{refinement} and Lemma \ref{Alexgradings} allow us to calculate the $\tau$-invariants of $\mu_2$ in $S^3_n(K)$ where $n\geq 2g(K)+1$. 

We provide an example calculation. Let $K$ be the $5_2$-knot in the 3-sphere. Since $K$ is 2-bridge, its $CFK^\infty$-complex can be calculated from a genus one Heegaard diagram. The resulting complex is pictured in Figure \ref{CFK5_2}.

\begin{figure}
\includegraphics[scale=2]{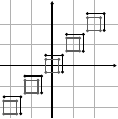}
\caption{$CFK^\infty(S^3,K)$ for the $5_2$-knot. Elements generating the ``tower" in $HF^+(S^3)$ are indicated in black.}
\label{CFK5_2}
\end{figure}

Applying Proposition \ref{refinement}, we calculate the relative filtration induced by $\mu_2$ in each $\s$-structure as shown in Figure \ref{filtrations}. Note that $g(K)=1$, so our choice of $n=4$ is sufficient. 

\begin{figure}[t]
\includegraphics[scale=1.5]{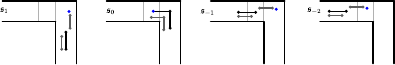}
\caption{The complexes $\widehat{CF}(S^3_4(K),\mathfrak{s}_i)$ with the filtration by $\mu_2$ indicated. Generators in gray do \emph{not} map to $\mathcal{T}^+\subset HF^+(S^3_4(K), \mathfrak{s}_i)$ and the generator that carries the $\tau$-invariant is indicated in blue. 
}
\label{filtrations}
\end{figure}

We observe that $S^3_4(K)$ is not an $L$-space, since $\widehat{HF}(S^3_4(K),\mathfrak{s}_0)=\F_2\oplus\F_2$. Applying Lemma \ref{Alexgradings} to calculate the Alexander gradings of the indicated generators we conclude that,
\begin{align*}
\tau_{\mathfrak{s}_1}(S^3_4(K),\mu_2)&=0; &\tau_{\mathfrak{s}_0}(S^3_4(K),\mu_2)&=-\frac{1}{2};\\
\tau_{\mathfrak{s}_{-1}}(S^3_4(K),\mu_2)&=1; &\tau_{\mathfrak{s}_{-2}}(S^3_4(K),\mu_2)&=\frac{1}{2}.
\end{align*}  

Finally, recall that if a $\s$-structure extends over a rational ball then its associated $d$-invariant vanishes.  Therefore, calculating the $d$-invariants of $S^3_4(K)$ will indicate which $\tau$-invariants are relevant for showing that $\mu_2$ is not slice. The $d$-invariants are given by the homological grading of the same generators indicated in Figure \ref{filtrations} that carry the $\tau$-invariants in each $\s$-structure. The necessary grading shift formulas are calculated in \cite[Theorem 4.4]{OS-knotinvariants}. See, for instance, \cite[Theorem 5.3]{Hedden-Livingston-Ruberman} for the phrasing in terms of positive surgery. This calculation shows that $d_{\mathfrak{s}_{-1}}$ and $d_{\mathfrak{s}_1}$ are zero while the other two $d$-invariants are non-zero. Therefore, these are the $\s$-structures that will extend over a rational ball. Thus, since $\tau_{\mathfrak{s}_{-1}}(S^3_4(K),\mu_2)=1$, we know that $\mu_2$ cannot be slice in any rational ball.

We conclude by assuring the reader that our finding is not vacuous. Figure \ref{5_2ball} shows that $S^3_4(K)$ does in fact bound a rational ball. 

\begin{figure}
\includegraphics[scale=.25]{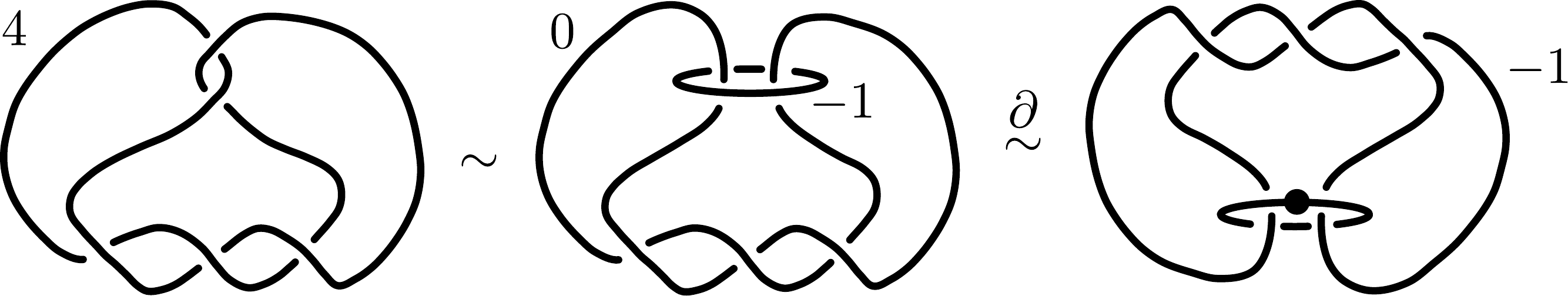}
\caption{A Kirby diagram showing that 4-surgery on the $5_2$-knot bounds a rational ball.}
\label{5_2ball}
\end{figure}

\bibliographystyle{alpha}	
\bibliography{MasterBibliography}

\end{document}